\documentclass[11pt]{article}
\usepackage{amsmath,latexsym,amssymb,amsfonts,amsbsy, amsthm}
\usepackage{graphicx}
\usepackage{color,xcolor}
\usepackage{ulem}

\setlength{\oddsidemargin}{0mm}
\setlength{\evensidemargin}{0mm} \setlength{\topmargin}{-15mm}
\setlength{\textheight}{220mm} \setlength{\textwidth}{155mm}
%%%%%%%%%%%%%%%%%%%%%%%%%%%%%%%%%%
%%%%%%%%%%%%%%%%%%%%%%%%%%%%%%%%%%
%MACROS AVEC ARGUMENTS%

\def\inte#1{
	\displaystyle\mathop{#1\kern0pt}^\circ }

%MACROS MECAFLU%

%ABREVIATIONS%

%LETTRES RONDES

\let\grad\nabla

%MACROS SANS ARGUMENTS

\def\virgp{\raise 2pt\hbox{,}}
\def\cdotpv{\raise 2pt\hbox{;}}

\def\eqdefa{\buildrel\hbox{\footnotesize def}\over =}

\def\C{\mathop{\bf C\kern 0pt}\nolimits}
\def\DD{\mathop{\bf D\kern 0pt}\nolimits}
\def\K{\mathop{\bf K\kern 0pt}\nolimits}
\def\N{\mathop{\bf N\kern 0pt}\nolimits}
\def\Q{\mathop{\bf Q\kern 0pt}\nolimits}
\def\R{\mathop{\bf R\kern 0pt}\nolimits}
\def\SS{\mathop{\bf S\kern 0pt}\nolimits}
\def\ZZ{\mathop{\bf Z\kern 0pt}\nolimits}
\def\TT{\mathop{\bf T\kern 0pt}\nolimits}

% MACRO EN ANGLAIS SANS ARGUMENTS

%\newcommand{}{ \hfill $ \blacksquare $ \vskip 3mm}
\newcommand{\beq}{\begin{equation}}
	\newcommand{\eeq}{\end{equation}}
\newcommand{\ben}{\begin{eqnarray}}
	\newcommand{\een}{\end{eqnarray}}
\newcommand{\beno}{\begin{eqnarray*}}
	\newcommand{\eeno}{\end{eqnarray*}}
%%%%%%%%%%%%%%%%%%%%%%%%%%%%%%%%%%
%%%%%%%%%%%%%%%%%%%%%%%%%%%%%%%%%%%%%%%%%%%%%%%%%%%%%
%%%%%%%%%%%%%%%%%%%%%%%%%%%%%%%%%%
%%%%%%%%%%%%%%%%%%%%%%%%%%%%%%%%%%

\newtheorem{thm}{Theorem}[section]
\newtheorem{lem}{Lemma}[section]
\newtheorem{rmk}{Remark}[section]

\newtheorem{prop}{Proposition}[section]
\renewcommand{\theequation}{\thesection.\arabic{equation}}

%%%%%%%%%%%-------------------------------------------

%%%%%%%%%%%%%%%%%%%%%%%%%%%%%%%%%%%%%%%%%%%%%%%%%%%

%%

%%%%%%%%%%%-------------------------------------------
%\setlength{\baselineskip}{1.2\baselineskip}
%\setlength{\arraycolsep}{0.5mm}
%%%%%%%%%%%%%%%%%%%%%%%%%%%%%%%%%%%%%%%%%%%%%%%%%%%%%
%%%%%%%%%%%%%%%%%%%%%%%%%%%%%%%%%%%%%%%%%%%%%%%%%%%%%
%%%%%%%%%%%%%%%%%%%%%%%%%%%%%%%%%%%%%%%%%%%%%%%%%%%%%
\begin{document}
	
	\title{Construction of low regularity strong solutions to the viscous surface wave equations}
	
	\author{ Guilong Gui \footnote{School of Mathematics and Computational Science, Xiangtan University, Xiangtan 411105, China. Email: {\tt glgui@amss.ac.cn}.}\quad
		Yancan Li \footnote{School of Mathematics and Computational Science, Xiangtan University, Xiangtan 411105, China. Email: {\tt yancan\_li@126.com}.}
	}
	
	\date{}
	\maketitle
	
	\begin{abstract}
		We construct in the paper the low-regularity strong solutions to the viscous surface wave equations in anisotropic Sobolev spaces. Here we use the Lagrangian structure of the system to homogenize the free boundary conditions, and establish a new iteration scheme on a known equilibrium domain to get the low-regularity strong solutions, in which no nonlinear compatibility conditions on the initial data are required.
		
		\vskip 0.3cm
		
		\noindent {\bf Keywords:} Viscous surface waves; Lagrangian coordinates; Global well-posedness; Anisotropic Sobolev spaces
	\end{abstract}

	\vskip 0.2cm
	
	\noindent {\sl AMS Subject Classification (2010):} 35Q30, 35R35, 76D03
	
	%%%%%%%%%%%%%%%%%%%%%%%%%%%%%%%%%%%%%%%%%%%%%%
	%%%%%%%%%%%%%%%%%%%%%%%%%%%%%%%%%%%%%%%%%%
	\renewcommand{\theequation}{\thesection.\arabic{equation}}
	\setcounter{equation}{0}
	%%%%%%%%%%%%%%%%%%%%%%%%%%%%%%%%%%%%%%%%%%%%%%
	%%%%%%%%%%%%%%%%%%%%%%%%%%%%%%%%%%%%%%%%%%

	\section{Introduction}

	\subsection{Formulation in Eulerian Coordinates}
	We consider in this paper the local existence of time-dependent flows of an
	viscous incompressible fluid in a moving domain $\Omega(t)$
	with an upper free surface $\Sigma_{F}(t)$ and a fixed bottom $\Sigma_B$
	\begin{equation}\label{VFS-eqns-1}
		\begin{cases}
			&\partial_t u + (u\cdot \grad) u + \grad p- \nu\Delta\,u=-g\,e_1 \quad\mbox{in} \quad \Omega(t),\\
			&\grad \cdot\, u=0 \quad \mbox{in} \quad \Omega(t),\\
			& (p\,\mathbb{I}-\nu\mathbb{D}(u))n(t)=p_{\mbox{\tiny atm}}n(t)\quad \mbox{on} \quad \Sigma_{F}(t),\\
			&  \mathcal{V}(\Sigma_{F}(t))=u\cdot n(t) \quad \mbox{on} \quad \Sigma_{F}(t),\\
			&u|_{\Sigma_B}=0,
		\end{cases}
	\end{equation}
	where we denote $n(t)$ the outward-pointing unit normal on $\Sigma_F(t)$, $\mathbb{I}$ the $3\times3$ identity matrix, $(\mathbb{D}u)_{ij}=\partial_iu^j+\partial_ju^i$ twice the symmetric gradient of the velocity $u=(u^1, u^2, u^3)$, $u^i=u^i(t, x_1, x_2, x_3)$, $i, j=1, 2, 3$, the constant $g > 0$ stands for the strength of gravity, $e_1=(1,0,0)^T$, and $\nu > 0$ is the constant coefficient of viscosity. We denote $ \mathcal{V}(\Sigma_{F}(t))$ the outer-normal velocity of the free surface $\Sigma_{F}(t)$. The tensor $(p\,\mathbb{I} -\nu\mathbb{D}(u))$ is known as the viscous stress tensor. Equation $\eqref{VFS-eqns-1}_1$ is the conservation of momentum, where gravity is the only external force, which points in the negative $x_1$ direction (as the vertical direction); the second equation in \eqref{VFS-eqns-1} means the fluid is incompressible; Equation $\eqref{VFS-eqns-1}_3$ means the fluid satisfies the kinetic boundary condition on the free boundary $\Sigma_F(t)$, where $p_{atm}$ stands for the atmospheric pressure, assumed to be constant. the kinematic boundary condition $\eqref{VFS-eqns-1}_4$ states that the free boundary
	$\Sigma_F(t)$ is moving with speed equal to the normal component of the fluid velocity; $\eqref{VFS-eqns-1}_5$ implies that the fluid is no-slip, no-penetrated on the fixed bottom boundary. Here the effect of surface tension is neglected on the free surface.
		
	For convenience, it is natural to
	subtract the hydrostatic pressure from $p$ in the usual way by adjusting the actual pressure $p$ according to $\widetilde{p}=p+g\,x_1-p_{atm}$, and still denote the new pressure $\widetilde{p}$ by $p$ for simplicity, so that after substitution the gravity term in $\eqref{VFS-eqns-1}_1$ and the atmospheric pressure term in $\eqref{VFS-eqns-1}_3$  are eliminated. A gravity term appears in $\eqref{VFS-eqns-1}_3$.

	The problem can be equivalently stated as follows. Given an initial domain $\Omega_0 \subset \mathbb{R}^3$ bounded by a bottom surface $\Sigma_{B}$, and a top surface $\Sigma_F(0)$, as well as an initial velocity field $u_0$, where the upper boundary does not touch the bottom, we wish to find for each $t\in [0, T]$ a
	domain $\Omega(t)$, a velocity field $u(t, \cdot)$ and pressure $p(t, \cdot)$ on $\Omega(t)$, and a transformation
	$\bar{\eta}(t, \cdot):\,\Omega_0\rightarrow \mathbb{R}^3$ so that
	\begin{equation}\label{VFS-eqns-12}
		\begin{cases}
			&\Omega(t)=\bar{\eta}(t, \Omega_0), \quad\bar{\eta}(t, \Sigma_B)=\Sigma_B,\\
			&\partial_t\bar{\eta}=u\circ\bar{\eta},\\
			&\partial_t u + (u\cdot \grad) u + \grad p- \nu\Delta\,u=0 \quad \text{in} \quad \Omega(t),\\
			&\grad \cdot\, u=0 \quad \mbox{in} \quad \Omega(t),\\
			& \left((p-g\,x_1)\,\mathbb{I}-\nu\mathbb{D}(u)\right)n(t)=0\quad \mbox{on} \quad \Sigma_F(t),\\
			&u|_{\Sigma_B}=0,\\
			&u|_{t=0}=u_0,\quad \bar{\eta}|_{t=0}=x
		\end{cases}
	\end{equation}
	
	The conditions on the initial domain $\Omega_0$ are as follows: Let the equilibrium domain $\Omega \subset \mathbb{R}^3$ be the horizontal infinite slab
	\begin{equation}\label{def-domain-1}
		\begin{split}
			&\Omega=\{x=(x_1, x_h)|-\underline{b}<x_1<0,\quad x_h\in\mathbb{R}^2\}
		\end{split}
	\end{equation}
	with the bottom $\Sigma_b=\{x_1=-\underline{b}\}$ and the top surface  $\Sigma_0=\{x_1=0\}$, where the positive constant $\underline{b}$ will be the depth of the fluid at infinity. We assume that $\Omega_0$ is the image of $\Omega$
	under a diffeomorphism $\overline{\sigma}: \Omega \rightarrow  \Omega_0$, where $\overline{\sigma}(\Sigma_b)=\Sigma_B$, $\overline{\sigma}(\Sigma_0)=\Sigma_F(0)$, $\overline{\sigma}$ is of the form $\overline{\sigma}(x)=x+\xi_0(x)$, and $\xi_0$ satisfies $\xi_0^1 \in H^s(\Sigma_0)$, and $\nabla\xi_0,\,\nabla_h^{s-1} \xi_0\in H^1(\Omega)$ with $s >2$,  $\xi_0^1$ stands for the vertical component of the vector $\xi_0$, $\nabla=(\partial_1, \partial_2, \partial_3)^T$, and $\nabla_h=(\partial_2, \partial_3)^T$.

	\subsection{Known results}

Many mathematicians have contributions in the free boundary problems of the incompressible Navier-Stokes equations. By using Lagrangian coordinates transformation, Solonnikov \cite{Solonnikov-1977} proved the local well-posedness of the viscous surface problem in H\"{o}lder spaces in a bounded domain whose entire boundary is a free surface. In a horizontal infinite domain, Beale \cite{Beale-1981} obtained the local well-posedness of the free boundary Navier-Stokes equations without surface tension in $L^2$-based space-time Sobolev spaces, which was extended to $L^p$-based space-time Sobolev spaces by Abels \cite{Abels-2005}. Furthermore, Beale \cite{Beale-1984} introduced the flattening transformation to get the global well-posedness of the viscous surface problem with surface tension in the space-time Sobolev spaces, and then the decay properties of solutions to incompressible viscous surface waves with surface tension where studied by Beale-Nishida \cite{Beale-Nishida-1985} and Hataya \cite{Hataya2011}.  Sylvester \cite{Sylvester-1990} and Tani-Tanaka \cite{Tani-Tanaka-1995} also discussed the well-posedness of the incompressible viscous surface problem without surface tension.  Bae \cite{Bae-2011} proved global well-posedness with surface tension using energy methods rather than a Beale–Solonnikov framework. For a bounded mass of fluid with surface tension, local well-posedness was proved by Coutand and Shkoller \cite{CS-2003}. Recently, by using the flattening transformation, Guo and Tice \cite{Guo-Tice-1, Guo-Tice-2} constructed the local-in-time solutions of the viscous surface wave equations by using the geometric structure in the Eulerian coordinates, and then got its global well-posedness and the algebraic decay rate of the solutions by introducing the two-tier energy method. Wu \cite{Wu2014} extended their local well-posedness result from small data to general data. Wang, Tice and Kim \cite{WTK-2014} also considered the global well-posedness and decay for two layers fluid. Ren, Xiang and Zhang \cite{Ren-X-Zhang-2019} proved the low-regularity local well-posedness of the system in Sobolev spaces in Eulerian coordinates. Wang \cite{Wang-2020} studied the anisotropic decay of the global solution by using the same method. More recently, the first author of this paper developed a mathematical approach to establish global well-posedness of the incompressible viscous surface waves based on the Lagrangian framework \cite{Gui2020}, where no nonlinear compatibility conditions on the initial data were required.
 
Nevertheless, almost all well-posedness results in previous works were established for the initial data which has
high regularity or some compatibility conditions on the initial data are needed. In the present case, a natural and important question is whether a corresponding well-posedness result can be obtained with low regularity and without compatibility conditions of the  
accelerated velocity on the initial data.

	\subsection{Formulation of the system in Lagrangian Coordinates}
	
	Let us now, in more detail, introduce the Lagrangian coordinates in which the free boundary becomes fixed.

	Let $\eta\eqdefa \bar{\eta} \circ \bar{\sigma}$ be a position of the fluid particle $x$ in the equilibrium domain $\Omega$ at time $t$ so that
	\begin{equation}\label{def-flowmap-1}
		\begin{cases}
			&\frac{d}{dt}\eta(t, x)=u(t, \eta(t, x)), \quad t>0, \, x\in \Omega,\\
			&\eta|_{t=0}=x+\xi_0(x), \quad x\in \Omega,
		\end{cases}
	\end{equation}
	then the displacement $\xi(t, x)\eqdefa \eta(t, x)-x$ satisfies
	\begin{equation}\label{def-flowmap-2}
		\begin{cases}
			&\frac{d}{dt}\xi(t, x)=u(t, x+\xi(t, x)),\\
			&\xi|_{t=0}=\xi_0.
		\end{cases}
	\end{equation}
	We define Lagrangian quantities the velocity $v$ and the pressure $q$ in fluid as (where $x=(x_1, x_2, x_3)^T\in \Omega$): $v(t, x)\eqdefa u(t, \eta(t, x))$, $ q(t, x)\eqdefa p(t, \eta(t, x))$.
	Denote the Jacobian of the flow map $\eta$ by $J \eqdefa \mbox{det}(D\eta)$.
	Define $\mathcal{A} \eqdefa  (D\eta)^{-T}$, then according to definitions of the flow map $\eta$ and the displacement $\xi$, we may get the identities:
	\begin{equation}\label{flow-map-identity-1}
		\mathcal{A}_{i}^k \partial_{k} \eta^j=\mathcal{A}_{k}^j \partial_{i} \eta^k=\delta_i^j, \quad \partial_k(J\mathcal{A}_{i}^k)=0,\quad\partial_{i} \eta^j=\delta_i^j+\partial_{i} \xi^j, \quad \mathcal{A}_{i}^j=\delta_i^j-\mathcal{A}_{i}^k \partial_k\xi^j.
	\end{equation}
	Set $a_{ij}\eqdefa\,J\,\mathcal{A}_i^j$. Simple computation implies that
	$ J=1+\grad \cdot\xi+\mathcal{B}_{00}+\mathcal{B}_{000}$, where $\mathcal{B}_{00}:=\partial_1\xi^1 \nabla_h\cdot \xi^h-\partial_1\xi^h\cdot\nabla_h\xi^1+\nabla_h^{\perp}\xi^2\cdot\nabla_h\xi^3$, $\mathcal{B}_{000}:=\partial_1\xi^1 \nabla_h^{\perp}\,\xi^2\cdot \nabla_h\,\xi^3+\partial_1\xi^2\nabla_h^{\perp}\,\xi^3\cdot \nabla_h\,\xi^1+\partial_1\xi^3 \nabla_h^{\perp}\,\xi^1\cdot \nabla_h\,\xi^2$
	with $\xi^h\eqdefa (\xi^2, \xi^3)^T$, $\nabla_h\eqdefa (\partial_2,\, \partial_3)^T$, $\nabla_h^{\perp}\eqdefa (-\partial_3,\, \partial_2)^T$.
	If the displacement $\xi$ is sufficiently small in an appropriate Sobolev space, then the flow mapping $\eta$ is a
	diffeomorphism from $\Omega$ to $\Omega(t)$, which makes us to switch back and forth from Lagrangian
	to Eulerian coordinates.

	Next, we give some useful equations which we often use in what follows.
	
	Since $\mathcal{A}(D\eta)^T=I$, differentiating it with respect to $t$ and $x$ once yields
	\begin{equation}\label{identity-Lagrangian-1}
		\begin{split}
			&\partial_t \mathcal{A}_{i}^j=-\mathcal{A}_{k}^j\mathcal{A}_{i}^{m} \partial_{m}v^k,\,\partial_{s} \mathcal{A}_{i}^j=-\mathcal{A}_{k}^j\mathcal{A}_{i}^{m} \partial_{m}\partial_{s}\xi^k,
		\end{split}
	\end{equation}
	where we used the fact $\partial_t\eta=v$ in the first equation in \eqref{identity-Lagrangian-1}.
	Whence differentiate the Jacobian determinant $J$, we get
	\begin{equation}\label{identity-deri-J}
		\begin{split}
			&\partial_t J =J \mathcal{A}_{i}^j \partial_j v^i, \quad \partial_{k} J =J \mathcal{A}_{i}^j \partial_j\partial_{k} \xi^i.
		\end{split}
	\end{equation}
	Moreover, we may verify the following Piola identity:
	\begin{equation}\label{identity-Piola}
		\begin{split}
			&\partial_j (J \mathcal{A}_{i}^j) =0 \quad \forall \,i = 1, 2, 3.
		\end{split}
	\end{equation}
	Here and in what follows, the subscript notation for vectors and tensors as well as the Einstein summation convention has been adopted unless otherwise specified.

	Under Lagrangian coordinates, we may introduce the differential operators with
their actions given by $(\nabla_{\mathcal{A}}f)_i=\mathcal{A}_i^j  \partial_jf$, $ \mathbb{D}_{\mathcal{A}} (v)=\nabla_{\mathcal{A}} v+(\nabla_{\mathcal{A}} v)^T$, $\Delta_{\mathcal{A}} f=\nabla_{\mathcal{A}}\cdot \nabla_{\mathcal{A}} f$, so the Lagrangian version of the system \eqref{VFS-eqns-12} can be written on the fixed reference domain $\Omega$ as
	\begin{equation}\label{EB-multi-1.1}
		\begin{cases}
			\partial_{t}\xi = v &\quad \text{ in } \Omega,\\
			\partial_t v + \nabla_{\mathcal{A}}q -\nu \nabla_{\mathcal{A}}\cdot \mathbb{D}_{\mathcal{A}}(v) = 0 &\quad \text{ in } \Omega,\\
			\nabla_{\mathcal{A}} \cdot v=0 &\quad \text{ in } \Omega,\\
			(q-g\,\xi^1)\mathcal{N} - \nu \mathbb{D}_{\mathcal{A}}(v)\mathcal{N} =0; &\quad \text{ on } \Sigma_0,\\
			v=0, & \quad \text{ on } \Sigma_b,\\
			\xi|_{t=0}=\xi_0,v|_{t=0} = v_0,
		\end{cases}
	\end{equation}
	where $n_0=e_1 = (1,0,0)^T$ is the outward-pointing unit normal vector on the interface $\Sigma_0$, $\mathcal{N}:= J\mathcal{A} n_0$ stands for the outward-pointing normal vector on the moving interface $\Sigma_F(t)$.

\subsection{Main result and ideas}\label{subsec-our-result}

The aim of this paper is to establish the local well-posedness of he system \eqref{EB-multi-1.1} with low regularity data in anisotropic Sobolev spaces.
We state the main result as follows, which proof will be presented in Section \ref{sect-4}.
	\begin{thm}\label{thm-main-1}
		$($Local well-posedness$)$ Let $s > 2$. Assume $(v_0, \xi_0)$ satisfies $\Lambda_h^{s-1}v_0 \in { _{0}{H}^1}(\Omega)$,  $\nabla_{J_0\mathcal{A}_0} \cdot v_0=0$, and $\Lambda_h^{s-1}\nabla\xi_0 \in H^1(\Omega)$, $\xi_0^1 \in H^{s}(\Sigma_0)$. There exists a positive constant $\epsilon_0$ and $0<T \leq\min\{1,\delta_0(\|\xi_0^1\|_{H^{s}{(\Sigma_0)}}^2 + \|\Lambda_h^{s-1} v_0\|_{H^1(\Omega)}^2  )^{-1}\}$ for some $\delta_0>0$ such that the system \eqref{EB-multi-1.1} has a unique solution $(v, \xi, q)$ depending continuously on the initial data $(v_0, \xi_0)$ which satisfies
		\begin{equation}\label{solution-space-1}
		\begin{split}
			&\Lambda_h^{s-1}\nabla\xi \in \mathcal{C}([0,T]; H^1(\Omega)),\quad \xi^1 \in \mathcal{C}([0,T];H^{s}(\Sigma_0)), \\
			&\Lambda_h^{s-1}v \in \mathcal{C}([0,T]; {_0}{H}^1(\Omega)) \cap L^2([0,T]; H^2(\Omega)),\quad \Lambda_h^{s-1} q \in L^2([0,T]; H^1(\Omega))
		\end{split}
		\end{equation}
		provided
		\[
		\|\Lambda_h^{s-1}\nabla\xi_0\|_{H^1(\Omega)}^2   \leq \epsilon_0.
		\]
		Moreover, for any $t\in (0,T]$, the solution satisfies the estimate
	\begin{equation}\label{est-main}
		\begin{split}
			\sup_{0\leq \tau\leq t}(\|\Lambda_h^{s-1}(\nabla\xi,\,v) \|_{H^1(\Omega)}^2 &+ \|\xi^1 \|_{H^s{(\Sigma_0)}}^2)  +   \|\Lambda_h^{s-1} (\nabla v ,q)\|^2_{L_t^2(H^1(\Omega))} \\
			&\leq C (\|\Lambda_h^{s-1}(\nabla\xi_0,\,v_0)\|_{H^1(\Omega)}^2 + \|\xi_0^1\|_{H^{s}{(\Sigma_0)}}^2).
		\end{split}
\end{equation}
	\end{thm}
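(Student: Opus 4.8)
The plan is to construct $(v,\xi,q)$ by a linearization/iteration scheme on the \emph{fixed} slab $\Omega$ (in the spirit of the Lagrangian approach of \cite{Gui2020}) and to close all bounds by horizontal (tangential) energy estimates combined with elliptic regularity for the stationary Stokes operator, so that the equation is never differentiated in time at $t=0$; this is exactly why only $\nabla_{J_0\mathcal A_0}\cdot v_0=0$ is needed — a linear constraint, equivalent to $\nabla_{\mathcal A_0}\cdot v_0=0$ since $J_0>0$ — and no nonlinear compatibility condition on the accelerated velocity $\partial_t v|_{t=0}$ appears. Set $(v^0,\xi^0)=(v_0,\xi_0)$. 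Given the $n$-th iterate, let $\xi^n(t)=\xi_0+\int_0^t v^n$, build $D\eta^n=\mathbb I+\nabla\xi^n$, $J^n=\det D\eta^n$, $\mathcal A^n=(D\eta^n)^{-T}$, $\mathcal N^n=J^n\mathcal A^n n_0$, and let $(v^{n+1},q^{n+1})$ solve the linear Stokes system obtained from \eqref{EB-multi-1.1} by freezing $\mathcal A\mapsto\mathcal A^n$, $\mathcal N\mapsto\mathcal N^n$ while retaining the coupling $\partial_t\xi^{n+1}=v^{n+1}$, so that the gravity term $g(\xi^{n+1})^1$ in the $\Sigma_0$-boundary condition still feeds back into the velocity equation. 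The constraint $\nabla_{\mathcal A^n}\cdot v^{n+1}=0$ is compatible at $t=0$ (since $\mathcal A^n(0)=\mathcal A_0$ and $\nabla_{\mathcal A_0}\cdot v_0=0$), and because $\nabla\xi^n$ will be kept small the matrix $\mathcal A^n$ is a small perturbation of $\mathbb I$, so the linear problem is solvable in the class \eqref{solution-space-1} by standard theory for the coordinate-transformed Stokes system.

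\textbf{A priori estimates for the linear step.}
For the linear problem with coefficients generated by a given $\bar\xi$ satisfying $\|\Lambda_h^{s-1}\nabla\bar\xi\|_{H^1}$ small, the analogue of \eqref{est-main} rests on four ingredients. (i) \emph{Horizontal energy estimate}: apply $\Lambda_h^{s-1}$ (commutators are of lower order, bounded by anisotropic product estimates and the $H^1$-control of $\nabla\bar\xi$), test with $J^{\bar\xi}\Lambda_h^{s-1}v$, and integrate by parts using the Piola identity \eqref{identity-Piola}; the $\Sigma_0$ pressure and viscous boundary terms recombine through the free-surface condition into $\int_{\Sigma_0}g\,\Lambda_h^{s-1}\xi^1\,\mathcal N^{\bar\xi}\!\cdot\Lambda_h^{s-1}v$, whose principal part is $\tfrac g2\tfrac d{dt}\|\Lambda_h^{s-1}\xi^1\|_{L^2(\Sigma_0)}^2$, while Korn's inequality on the slab together with the coercivity of $\mathbb D_{\mathcal A}$ turns the interior viscous term into $\nu\|\Lambda_h^{s-1}\nabla v\|_{L^2}^2$ up to small error; this controls $\sup_t\|\Lambda_h^{s-1}(v,\xi^1)\|$ and $\|\Lambda_h^{s-1}\nabla v\|_{L^2_tL^2}$. (ii) \emph{Bound on $\partial_t v$}: testing at horizontal level $s-1$ with a weighted $\partial_t v$, the $\Sigma_0$-term again closes through the free-surface condition up to a trace term $g\|v^1\|_{L^2(\Sigma_0)}^2$ absorbed by interpolation, yielding $\|\Lambda_h^{s-1}\partial_t v\|_{L^2_tL^2}$ — using only $\nabla v_0\in L^2(\Omega)$, which follows from $\Lambda_h^{s-1}v_0\in{}_0H^1(\Omega)$ because $\langle\nabla_h\rangle^{s-1}\ge1$ for $s\ge1$, and with no time derivative evaluated at $t=0$. (iii) \emph{Stokes elliptic regularity}: reading the momentum equation at fixed time as $-\nu\Delta_{\mathcal A}v+\nabla_{\mathcal A}q=-\partial_t v$, $\nabla_{\mathcal A}\cdot v=0$ with the Robin-type free-surface condition, and perturbing off the flat Stokes operator, gives $\|\Lambda_h^{s-1}v\|_{H^2}+\|\Lambda_h^{s-1}q\|_{H^1}\lesssim\|\Lambda_h^{s-1}\partial_t v\|_{L^2}+g\|\Lambda_h^{s-1}\xi^1\|_{H^{1/2}(\Sigma_0)}+\text{l.o.t.}$, closing, after time integration, $\Lambda_h^{s-1}v\in L^2_tH^2$ and $\Lambda_h^{s-1}q\in L^2_tH^1$ (so $q$ needs neither initial data nor a compatibility condition, being recovered from $(v,\xi)$). (iv) \emph{Displacement and surface}: from $\partial_t\xi=v$, $\|\Lambda_h^{s-1}\nabla\xi(t)\|_{H^1}\le\|\Lambda_h^{s-1}\nabla\xi_0\|_{H^1}+\sqrt t\,\|\Lambda_h^{s-1}\nabla v\|_{L^2_tH^1}$, and the trace on $\Sigma_0$ of $\Lambda_h^{s-1}v\in L^2_tH^2(\Omega)$ lies in $H^{s+1/2}(\Sigma_0)\hookrightarrow H^s(\Sigma_0)$ (room since $s>2$), so $\|\xi^1(t)\|_{H^s(\Sigma_0)}\le\|\xi_0^1\|_{H^s(\Sigma_0)}+\int_0^t\|v^1\|_{H^s(\Sigma_0)}$.

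\textbf{Uniform bounds, convergence, uniqueness.}
Imposing $\|\Lambda_h^{s-1}\nabla\xi_0\|_{H^1}^2\le\epsilon_0$ and $0<T\le\min\{1,\delta_0(\|\xi_0^1\|_{H^s(\Sigma_0)}^2+\|\Lambda_h^{s-1}v_0\|_{H^1(\Omega)}^2)^{-1}\}$ keeps $\mathcal A^n-\mathbb I$, $J^n-1$, $\mathcal N^n-n_0$ small and makes every error term in the previous step absorbable (each carrying a factor of the coefficient-smallness or of $\sqrt T$), so all iterates remain in a fixed ball of \eqref{solution-space-1} whose radius is governed by the right-hand side of \eqref{est-main}, and $\eta^n$ stays a diffeomorphism. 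The differences $(\delta v^{n+1},\delta\xi^{n+1},\delta q^{n+1})=(v^{n+1}-v^n,\xi^{n+1}-\xi^n,q^{n+1}-q^n)$ solve the same type of linear system with coefficients $\mathcal A^n$ and right-hand sides bilinear in $(\mathcal A^n-\mathcal A^{n-1},\mathcal N^n-\mathcal N^{n-1})$ and $(\nabla v^n,\nabla q^n,\partial_t v^n)$; since $\mathcal A^n-\mathcal A^{n-1}$, $\mathcal N^n-\mathcal N^{n-1}$ are controlled by $\|\delta\xi^n\|\lesssim\sqrt T\,\|\delta v^n\|_{L^2_t(\cdots)}$, the linear estimate produces a contraction for $T$ small — if necessary only in a norm one derivative below \eqref{solution-space-1}, the top regularity being then recovered by weak-$*$ compactness and interpolation (a Bona--Smith type argument). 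The limit $(v,\xi,q)$ solves \eqref{EB-multi-1.1} in the class \eqref{solution-space-1} and satisfies \eqref{est-main}; uniqueness and continuous dependence on $(v_0,\xi_0)$ follow from the same difference estimate.

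\textbf{Main obstacle.}
The crux is the combination of ingredients (iii) and (iv) above: carrying out the Stokes elliptic estimate and the pressure bound at this low regularity and with coefficients that are themselves only of anisotropic $H^1$-type, which forces the whole analysis to stay inside the anisotropic Sobolev scale and to lean on its product/commutator calculus, on Korn's inequality and sharp trace inequalities on the unbounded slab, and — most delicately — on closing the estimate for the highest-order unknown $\xi^1\in H^s(\Sigma_0)$, which enters the free-surface condition yet is recovered only by integrating $\partial_t\xi^1=v^1$ in time, so that the full strength of the $L^2_tH^2$ bound on $\Lambda_h^{s-1}v$ and the sharp $\Sigma_0$-trace must be exploited.
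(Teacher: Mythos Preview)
Your high-level iteration scheme and the shape of the a~priori estimates (i)--(iv) match the paper's architecture: the paper also freezes $\mathcal A\mapsto\mathcal A^{n-1}$, closes via a horizontal energy identity, a $\partial_t v$ bound obtained through ``good unknowns'' $\mathcal G_Q,\mathcal G_W$ (see \eqref{good_unknows_1}), and Stokes elliptic regularity (see \eqref{Step-3-3.1}--\eqref{Step-3-3.4}), and the contraction/uniqueness argument is the same as yours.

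The genuine gap is your sentence ``the linear problem is solvable in the class \eqref{solution-space-1} by standard theory for the coordinate-transformed Stokes system.'' This is precisely what the paper flags as non-standard (Remarks~1.4--1.5): at this regularity, with no compatibility condition on $\partial_t v|_{t=0}$, classical parabolic theory for non-homogeneous boundary data does not deliver a solution. Once you rewrite $\nabla_{\mathcal A^n}\cdot v^{n+1}=0$ and the $\mathcal A^n$-stress condition in flat coordinates, you face a Stokes system with non-homogeneous divergence $\nabla\cdot u=F_2=B^1{:}\nabla K$ and non-homogeneous stress $F_3=B^2{:}\nabla_h K$ on $\Sigma_0$, with $K$ the unknown itself. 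The paper's entire Section~3 (Theorem~\ref{thm-linear-problim-11}) is devoted to solving this: first lifting the divergence via a Neumann problem; then---the crux---homogenizing the upper stress condition by building a divergence-free corrector $V=V_1+V_2$ through an explicit heat-type extension in $(t,x_h)$-Fourier variables (equations \eqref{EB-multi-3.7}--\eqref{EB-multi-3.11}) plus a second corrector $V_2$ to repair the bottom boundary, with the delicate $L^\infty_tH^1$ bound on $V$ recovered via the good-unknown trick; only after this reduction does one reach the homogeneous problem (Proposition~\ref{linear-system}) to which your ingredients (i)--(iv) apply, and existence there is supplied by a dissipativity/semigroup argument (Lemma~\ref{lem-dissi-G-oper-1}). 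Your sketch gives only a~priori bounds for the linear step, not a construction; and because Theorem~\ref{thm-linear-problim-11} takes $K^{(i)}$ as \emph{given} data in $F_2,F_3$, the paper must in fact run a \emph{double} iteration---inner in $m$ (systems \eqref{EB-4.10}, \eqref{EB-non-nm}) to close $K=v$, outer in $n$---which your single-layer scheme does not account for.
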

\begin{rmk}\label{rmk-thm1-1}
We may readily check that, if the maximal time $T^{\ast} $of the existence of the solution $(\xi,\,v,\,q)$ in Theorem \ref{thm-main-1} is finite: $T^{\ast} <+\infty $, then
\begin{equation}\label{loc-blow-crit}
\begin{split}
\lim_{t  \nearrow T^{\ast}}(\|\Lambda_h^{s-1}(\nabla\xi(t),\,v(t))\|_{H^1(\Omega)}+\|\xi^1(t) \|_{H^{s}(\Sigma_0)}+\|J^{-1}\|_{L^\infty(\Omega)})=+\infty.
\end{split}
\end{equation}
Moreover, if in addition $\dot{\Lambda}^{-\lambda}_h(\xi_0^1,\,v_0) \in L^2(\Sigma_0)\times H^1(\Omega)$ for some $\lambda\in (0, 1)$, then $\dot{\Lambda}^{-\lambda}_h(\xi,\,v) \in \mathcal{C}([0, T_0]; L^2(\Sigma_0)\times H^1(\Omega))$.
\end{rmk}
\begin{rmk}\label{rmk-thm1-2}
We say the strong solution $(\xi, v)$ obtained in Theorem \ref{thm-main-1} low-regular, which means that the surface function $\xi^1 \in H^s(\Sigma_0)$ with $s>2$ has the minimal regularity guaranteeing  the regular condition $\nabla_h\xi^1 \in L^{\infty}(\Sigma_0)$.
 \end{rmk}
 
 \begin{rmk}
      The classical parabolic theory of non-homogeneous boundary conditions \cite{LionsMagenes1972} is not enough to support us to complete the proof of Theorem \ref{thm-linear-problim-11} (see Sect. \ref{sect-3}) about the well-posedness of the linear system \eqref{EB-multi-3.1} which is crucial in our construction of local solutions. Therefore, we can't employ the approach in \cite{Beale-1981} to construct the solutions. In addition, due to the lack of compatibility conditions, the construction of local solutions is also different from \cite{Guo-Tice-1,Wu2014,Ren-X-Zhang-2019}. And another difference from \cite{Guo-Tice-1,Wu2014,Ren-X-Zhang-2019} is that our construction of local solutions is based on the Lagrange framework.
    \end{rmk}
    \begin{rmk}
      Motivated by \cite{Danchin2020}, we establish the well-posedness of the linear system \eqref{EB-multi-3.1} to construct low regularity strong solutions. But there are two main difficulties to overcome. The first one is,  for the linear system \eqref{EB-multi-3.1} in the finite depth case , we cannot deal with the non-homogeneous boundary conditions of \eqref{EB-multi-3.1}
      as directly as for the infinite case in \cite{Danchin2020}. Therefore, we need to construct another suitable divergence-free vector field to correct. On the other hand, our results allow the presence of gravity and a small perturbation in the initial domain, so we need to establish a new iteration scheme on a known equilibrium domain.
    \end{rmk}
 
 To prove Theorem \ref{thm-main-1}, we will first linearize the surface wave equations \eqref{EB-multi-1.1} in Lagrangian coordinates to the system \eqref{fixed-point} or \eqref{EB-multi-3.1}. To solve \eqref{EB-multi-3.1} with non-homogeneous boundary conditions, we will first remove the divergence of the velocity in Lagrangian coordinates and homogenize free boundary conditions in \eqref{EB-multi-3.1}, and then solve the result system with homogeneous boundary conditions in Proposition \ref{linear-system}, from which, we may obtain Theorem \ref{thm-linear-problim-11}. With Theorem \ref{thm-linear-problim-11} in hand, we establish a new iteration scheme on a known equilibrium domain to get the low-regularity strong solutions (see Sect. \ref{sect-4}).
 
 \subsection{Plan of the paper}

The rest of the paper is organized as follows. Section \ref{sect-2} introduces some basic estimates, which will be heavily used in this paper. In Section \ref{sect-3}, we  prove Theorem \ref{thm-linear-problim-11} for the linear system  \eqref{EB-multi-3.1} in four steps, which is crucial in the proof of Theorem \ref{thm-main-1}. Then, we use the fixed point method to prove Theorem \ref{thm-main-1} in Section \ref{sect-4}. Section \ref{sect-appendix-1} is an appendix which gives the expressions of $B$ forms associated with the system \eqref{fixed-point}.

\subsection{Notations}

\medbreak  Let us end this introduction by some notations that will be used in all that follows.

For operators $A,B,$ we denote $[A, B]=AB-BA$ to be the  commutator of $A$ and $B.$ For~$a\lesssim b$, we mean that there is a uniform constant $C,$ which may be different on different lines, such that $a\leq Cb$.  The notation $a\thicksim b$ means both $a\lesssim b$ and $b\lesssim a$. Throughout the paper, the subscript notation for vectors and tensors as well as the Einstein summation convention has been adopted unless otherwise specified, Einstein's summation convention means that repeated Latin indices $i,\, j,\, k$, etc., are summed from $1$ to $3$, and repeated Greek indices $\alpha, \,\beta, \,\gamma$, etc., are summed from $2$ to $3$. In the vector $v=(v^1, v^2, v^3)^T$, we denote the vertical component by $v^1$, and its horizontal component by $v^h=(v^2, v^3)^T$.

%%%%%%%%%%%%%%%%%%%%%%%%%%%%%%%%%%%%%%%%%%%%%%
%%%%%%%%%%%%%%%%%%%%%%%%%%%%%%%%%%%%%%%%%%
\renewcommand{\theequation}{\thesection.\arabic{equation}}
\setcounter{equation}{0}
%%%%%%%%%%%%%%%%%%%%%%%%%%%%%%%%%%%%%%%%%%%%%%
%%%%%%%%%%%%%%%%%%%%%%%%%%%%%%%%%%%%%%%%%%

\section{ Preliminary estimates}\label{sect-2}

Let's first recall some basic estimates, which will be heavily used in the rest of the paper.

\begin{lem}[\cite{Alinhac-1986}, Theorem 2.61 in \cite{BCD}]\label{lem-composition-1}
Let $f$ be a smooth function on $\mathbb{R}$ vanishing at $0$, $s_1$, $s_2$ be two positive
real number, $s_1\in (0, 1)$, $s_2>0$. If $u$ belongs to $\dot{H}^{s_1}(\mathbb{R}^2) \cap \dot{H}^{s_2}(\mathbb{R}^2)\cap L^{\infty}(\mathbb{R}^2)$, then so does $f\circ u$,
and we have
\begin{equation*}
\|f\circ u\|_{\dot{H}^{s_i}} \leq C(f', \|u\|_{L^{\infty}})\|u\|_{\dot{H}^{s_i}}\quad \text{for} \quad i=1, 2.
\end{equation*}
\end{lem}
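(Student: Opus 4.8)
The plan is to prove the two inequalities ($i=1$ and $i=2$) by separate arguments; the role of the hypothesis $s_1\in(0,1)$ is that $s_1$ lies strictly below the critical exponent $d/2=1$ of $\mathbb{R}^2$. For $i=1$ I would invoke the Gagliardo-type characterisation
\[
\|w\|_{\dot{H}^{s_1}(\mathbb{R}^2)}^2\sim\int_{\mathbb{R}^2}\frac{\|w(\cdot+y)-w\|_{L^2(\mathbb{R}^2)}^2}{|y|^{2+2s_1}}\,dy,
\]
valid for $s_1\in(0,1)$. Since $u$ takes values in $[-\|u\|_{L^\infty},\|u\|_{L^\infty}]$ and $f\in C^1(\mathbb{R})$, the mean value theorem gives the pointwise bound $|f(u(x+y))-f(u(x))|\le M|u(x+y)-u(x)|$ with $M:=\sup_{|t|\le\|u\|_{L^\infty}}|f'(t)|$; inserting this into the displayed formula immediately yields $\|f\circ u\|_{\dot{H}^{s_1}}\le M\|u\|_{\dot{H}^{s_1}}$, and the same computation also covers $i=2$ whenever $s_2<1$. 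It remains to treat $s_2\ge1$.

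For $s_2\ge1$ I would argue by a homogeneous Littlewood--Paley decomposition. With $\dot{\Delta}_q$ the dyadic blocks and $\dot{S}_q=\sum_{q'<q}\dot{\Delta}_{q'}$, Bernstein's inequality in $\mathbb{R}^2$ gives $\|\dot{\Delta}_{q'}u\|_{L^\infty}\lesssim 2^{q'}\|\dot{\Delta}_{q'}u\|_{L^2}$, hence $\|\dot{S}_q u\|_{L^\infty}\lesssim 2^{q(1-s_1)}\|u\|_{\dot{H}^{s_1}}\to0$ as $q\to-\infty$ (using $1-s_1>0$); this legitimises the telescoping identity $f\circ u=\sum_{q\in\mathbb{Z}}\bigl(f(\dot{S}_{q+1}u)-f(\dot{S}_q u)\bigr)=\sum_{q\in\mathbb{Z}}m_q\,\dot{\Delta}_q u$ with $m_q:=\int_0^1 f'(\dot{S}_q u+\tau\dot{\Delta}_q u)\,d\tau$ and $\|m_q\|_{L^\infty}\le M'$ uniformly in $q$, $M'$ depending only on $\|u\|_{L^\infty}$ and the size of $f'$ on a fixed multiple of $[-\|u\|_{L^\infty},\|u\|_{L^\infty}]$. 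Then $\dot{\Delta}_j(f\circ u)=\sum_q\dot{\Delta}_j(m_q\dot{\Delta}_q u)$, and I would split the sum at $q=j-N_0$ with $N_0$ fixed and large. In the range $q>j-N_0$ one has $\|\dot{\Delta}_j(m_q\dot{\Delta}_q u)\|_{L^2}\le M'\|\dot{\Delta}_q u\|_{L^2}$, and since $s_2>0$ the sequence $\bigl(2^{js_2}\sum_{q>j-N_0}\|\dot{\Delta}_q u\|_{L^2}\bigr)_j$ is the convolution of $(2^{qs_2}\|\dot{\Delta}_q u\|_{L^2})_q\in\ell^2$ with an $\ell^1$ kernel, hence lies in $\ell^2$ with norm $\lesssim M'\|u\|_{\dot{H}^{s_2}}$. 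In the range $q\le j-N_0$ the frequency supports force the contribution to come only from the part of $m_q$ localised at frequency $\sim2^j$, so that $\|\dot{\Delta}_j(m_q\dot{\Delta}_q u)\|_{L^2}\lesssim 2^{-jN}\|\nabla^N m_q\|_{L^\infty}\|\dot{\Delta}_q u\|_{L^2}$ for every integer $N$; taking $N>s_2$ and bounding $\|\nabla^N m_q\|_{L^\infty}\le C_N(f,\|u\|_{L^\infty})2^{Nq}$ via the Fa\`a di Bruno formula together with $\|\nabla^a\dot{S}_q u\|_{L^\infty}+\|\nabla^a\dot{\Delta}_q u\|_{L^\infty}\lesssim 2^{aq}\|u\|_{L^\infty}$, one gets $\|\dot{\Delta}_j(m_q\dot{\Delta}_q u)\|_{L^2}\lesssim C_N\,2^{N(q-j)}\|\dot{\Delta}_q u\|_{L^2}$; since $N-s_2>0$ the same convolution argument gives an $\ell^2$-in-$j$ bound $\lesssim C_N\|u\|_{\dot{H}^{s_2}}$. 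Adding the two ranges yields $\|f\circ u\|_{\dot{H}^{s_2}}\le C(f,\|u\|_{L^\infty})\|u\|_{\dot{H}^{s_2}}$.

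I expect the main obstacle to be the low range $q\le j-N_0$: there $m_q$ carries frequencies of order $2^j$ even though $\dot{\Delta}_q u$ does not, so this high-frequency part of $m_q$ must be controlled by differentiating $m_q$, which for $s_2\ge1$ brings in finitely many derivatives of $f$ on the range of $u$ and is what fixes the form of the constant. The other point not to overlook is that, in the homogeneous framework, the telescoping decomposition is valid only because $\dot{S}_q u\to0$ in $L^\infty$ as $q\to-\infty$; this convergence --- unlike the automatic uniform boundedness of $\dot{S}_q u$ --- requires $s_1<d/2=1$, and supplying it is precisely the purpose of the hypothesis $u\in\dot{H}^{s_1}$ with $s_1\in(0,1)$.
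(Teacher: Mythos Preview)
The paper does not supply a proof of this lemma; it is quoted verbatim from the references (Alinhac and Theorem~2.61 of Bahouri--Chemin--Danchin). Your proposal is correct and is exactly the argument found in those references: the Gagliardo seminorm plus the mean-value inequality for $s\in(0,1)$, and Meyer's first linearisation $f(u)=\sum_q m_q\dot\Delta_q u$ via the homogeneous Littlewood--Paley telescoping for $s\ge 1$, with the low-frequency/high-frequency split and the Fa\`a di Bruno bound $\|\nabla^N m_q\|_{L^\infty}\le C_N 2^{Nq}$. Your observation that the extra hypothesis $u\in\dot H^{s_1}$ with $s_1\in(0,1)$ is precisely what forces $\dot S_q u\to 0$ in $L^\infty$ as $q\to-\infty$ (so that the telescoping sum makes sense in the homogeneous setting, $f(0)=0$ being used here) is also the standard remark accompanying the homogeneous version of the theorem.
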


\begin{lem}(\cite{Alinhac-1986, BCD})\label{prop-composition-2}
Let $f$ be a smooth function such that $f'(0)=0$ . Let $s > 0$. For any couple $(u, v)$ of functions in $B^{s}_{p, r}\cap L^{\infty}$,
the function $f\circ v-f\circ u$ then belongs to $\dot{H}^{s}\cap L^{\infty}$ and
\begin{equation*}\begin{split}
&\|f\circ v-f\circ u\|_{\dot{H}^{s}} \leq C_{f''}(\|u\|_{L^{\infty}},  \|v\|_{L^{\infty}})\\
&\qquad \times (\|u-v\|_{\dot{H}^{s}}\sup_{\tau\in [0, 1]}\|v+\tau(u-v)\|_{L^{\infty}}+\|u-v\|_{L^{\infty}}\sup_{\tau\in [0, 1]}\|v+\tau(u-v)\|_{\dot{H}^{s}}),
\end{split}
\end{equation*}
where $C_{f''}(\cdot, \cdot)$ is a uniformly continuous function on $[0, \infty) \times [0, \infty)$ depending only on $f''$.
\end{lem}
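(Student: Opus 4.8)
The plan is to reduce this difference estimate to the composition estimate for a \emph{single} function (Lemma~\ref{lem-composition-1}) via the fundamental theorem of calculus, and then to close with a tame product rule; throughout I read the hypothesis as $u,v\in\dot H^s\cap L^\infty$, consistent with the conclusion. Set $g:=f'$, which is smooth and satisfies $g(0)=0$ by assumption. For $\tau\in[0,1]$ put $w_\tau:=v+\tau(u-v)$, so that $w_0=v$, $w_1=u$, and, by convexity, $\|w_\tau\|_{L^\infty}\le M:=\max(\|u\|_{L^\infty},\|v\|_{L^\infty})$ and $\|w_\tau\|_{\dot H^s}\le\max(\|u\|_{\dot H^s},\|v\|_{\dot H^s})<\infty$. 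The chain rule and the fundamental theorem of calculus give the identity
\[
 f\circ v-f\circ u \;=\; -\int_0^1\frac{d}{d\tau}\,f(w_\tau)\,d\tau \;=\; (v-u)\,G,\qquad G:=\int_0^1 g(w_\tau)\,d\tau ,
\]
in which the (possibly nonzero) constant $f(0)$ has disappeared — this is precisely why the \emph{difference} lies in the homogeneous space even though $f\circ u$, $f\circ v$ separately need not.

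Next I would estimate the two factors of $(v-u)\,G$. Since $g(0)=0$, the mean value theorem gives $|g(w_\tau(x))|\le\big(\sup_{|z|\le M}|g'(z)|\big)|w_\tau(x)|$, so that, after integrating in $\tau$,
\[
 \|G\|_{L^\infty}\le\int_0^1\|g(w_\tau)\|_{L^\infty}\,d\tau\le C_{f''}(M)\,\sup_{\tau\in[0,1]}\|w_\tau\|_{L^\infty},
\]
where here and below $C_{f''}(\cdot)$ denotes a nondecreasing continuous function built only from $\sup_{|z|\le\cdot}|f''(z)|$ and finitely many of its derivatives (recall $g'=f''$). For the homogeneous norm, Minkowski's inequality together with the composition estimate applied to the smooth function $g$ vanishing at $0$ (Lemma~\ref{lem-composition-1}, or, for the full range $s>0$, its standard homogeneous version in \cite{BCD}) yields
\[
 \|G\|_{\dot H^s}\le\int_0^1\|g(w_\tau)\|_{\dot H^s}\,d\tau\le C_{f''}(M)\,\sup_{\tau\in[0,1]}\|w_\tau\|_{\dot H^s}.
\]

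Finally I would invoke the tame product estimate $\|ab\|_{\dot H^s}\lesssim\|a\|_{\dot H^s}\|b\|_{L^\infty}+\|a\|_{L^\infty}\|b\|_{\dot H^s}$, valid for $s>0$ (Bony decomposition; see \cite{BCD}), with $a=v-u$ and $b=G$. Combined with the two bounds above, this gives exactly
\[
 \|f\circ v-f\circ u\|_{\dot H^s}\le C_{f''}(M)\Big(\|u-v\|_{\dot H^s}\,\sup_{\tau\in[0,1]}\|w_\tau\|_{L^\infty}+\|u-v\|_{L^\infty}\,\sup_{\tau\in[0,1]}\|w_\tau\|_{\dot H^s}\Big),
\]
which is the asserted inequality once $C_{f''}(M)$ is written as a function of the pair $(\|u\|_{L^\infty},\|v\|_{L^\infty})$; moreover the identity of the first step and $\|G\|_{L^\infty}<\infty$ show $f\circ v-f\circ u\in L^\infty$, and the last display shows it lies in $\dot H^s$. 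The only step that is not pure bookkeeping is the homogeneous composition estimate for $g(w_\tau)$ in $\dot H^s$: for $s\in(0,1)$ it is Lemma~\ref{lem-composition-1} verbatim, but for $s\ge1$ — which is the range actually used here, since the paper works with exponents $s-1>1$ — one cannot borrow an auxiliary exponent $s_1\in(0,1)$ (the embedding $L^\infty\cap\dot H^s\hookrightarrow\dot H^{s_1}$ fails, as a width-$L$ bump shows), so the homogeneous space must be handled modulo polynomials and one uses the general Moser-type composition theorem, proved by a Bony-decomposition/telescoping argument with careful treatment of the low frequencies. Everything else is routine, and the constant tracking is designed so that all dependence on $f$ is through $\sup_{|z|\le M}|f''(z)|$ and its derivatives, as required by the statement.
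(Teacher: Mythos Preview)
The paper does not supply its own proof of this lemma: it is stated in Section~\ref{sect-2} as a preliminary estimate and simply attributed to \cite{Alinhac-1986, BCD}. Your argument --- write $f\circ v-f\circ u=(v-u)\int_0^1 f'(w_\tau)\,d\tau$, bound the integral factor in $L^\infty$ and $\dot H^s$ via the single-function composition estimate applied to $g=f'$ (which vanishes at $0$), and close with the tame product law --- is exactly the standard proof found in those references, and it is correct; your reading of the hypothesis as $u,v\in\dot H^s\cap L^\infty$ (rather than the $B^s_{p,r}$ written in the statement) is also the only sensible one given the conclusion.
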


\begin{lem}[Classical product laws in Sobolev spaces \cite{BCD}]\label{lem-product-law-1}
Let $s_0>2$, $p_1,\,p_2 \in [2, +\infty)$, $q_1,\, q_2 \in (2, +\infty]$, $\frac{1}{p_1}+\frac{1}{q_1}=\frac{1}{p_2}+\frac{1}{q_2}=\frac{1}{2}$, there hold
\begin{equation*}\label{product-law-1}
\begin{split}
&\|f\,g\|_{\dot{H}^{s_1+s_2-1}(\mathbb{R}^2)} \lesssim  \|f\|_{\dot{H}^{s_1}(\mathbb{R}^2)} \|g\|_{\dot{H}^{s_2}(\mathbb{R}^2)} \quad \forall \,\, s_1,\,s_2 \in (-1, 1),\, s_1+s_2>0,\\
&\|f\,g\|_{\dot{H}^{s_0-1}(\mathbb{R}^2)} \lesssim  \|f\|_{\dot{H}^{s_0-1}(\mathbb{R}^2)} \|g\|_{\dot{H}^{s_0-1}(\mathbb{R}^2)},\\
&\|\dot{\Lambda}_h^{\sigma}(f\,g)\|_{L^2(\mathbb{R}^2_h)} \lesssim
\|\dot{\Lambda}_h^{\sigma}f\|_{L^{p_1}(\mathbb{R}^2_h)}\|g\|_{L^{q_1}(\mathbb{R}^2_h)}
+\|\dot{\Lambda}_h^{\sigma}g\|_{L^{p_2}(\mathbb{R}^2_h)}\|f\|_{L^{q_2}(\mathbb{R}^2_h)} \quad \forall \,\, \sigma>0,\\
&\|\dot{\Lambda}_h^{s_1}(f\,g)\|_{L^2(\mathbb{R}^2_h)} \lesssim
\|\dot{\Lambda}_h^{s_1}f\|_{L^2(\mathbb{R}^2_h)}
\|\dot{\Lambda}_h^{s_0-1, s_2} g\|_{L^2(\mathbb{R}^2_h)} \, \forall\,\, s_1 \in (-1, 1],\,s_2\in (0, 1).
\end{split}
\end{equation*}
\end{lem}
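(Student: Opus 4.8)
All four inequalities are classical and are recorded in \cite{BCD}; the plan is to derive each of them from the homogeneous Littlewood--Paley decomposition on $\mathbb{R}^2$ together with Bony's paraproduct calculus. I would first recall the dyadic blocks $\dot\Delta_j$ and low-frequency cutoffs $\dot S_j=\sum_{k\le j-1}\dot\Delta_k$, the norm equivalences $\|u\|_{\dot H^s(\mathbb{R}^2)}^2\sim\sum_j 2^{2js}\|\dot\Delta_j u\|_{L^2}^2$ and $\|\dot\Lambda_h^s u\|_{L^2}\sim\|u\|_{\dot H^s}$, and the two-dimensional Bernstein inequalities $\|\dot\Delta_k u\|_{L^b}\lesssim 2^{2k(1/a-1/b)}\|\dot\Delta_k u\|_{L^a}$ for $a\le b$. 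The workhorse is the Bony decomposition $fg=\dot T_f g+\dot T_g f+\dot R(f,g)$, where $\dot T_f g=\sum_k\dot S_{k-1}f\,\dot\Delta_k g$ is spectrally supported in annuli $\{|\xi|\sim 2^k\}$ and $\dot R(f,g)=\sum_k\dot\Delta_k f\,\widetilde{\dot\Delta}_k g$ in balls $\{|\xi|\lesssim 2^k\}$. Note that the hypothesis $s_1,s_2\in(-1,1)$ in the first estimate is exactly $|s_i|<d/2$ with $d=2$, which is what makes $\dot H^{s_i}(\mathbb{R}^2)$ a genuine space of tempered distributions, so that the product $fg$ is meaningful.

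For the first inequality I would bound the three pieces separately. In $\dot T_f g$, Bernstein gives $\|\dot S_{k-1}f\|_{L^\infty}\lesssim\sum_{l\le k-2}2^{l}\|\dot\Delta_l f\|_{L^2}\lesssim 2^{k(1-s_1)}\|f\|_{\dot H^{s_1}}$, the series converging since $s_1<1$; as $\dot\Delta_j\dot T_f g$ only feels $k\sim j$, summing against $\|\dot\Delta_k g\|_{L^2}\lesssim 2^{-ks_2}d_k\|g\|_{\dot H^{s_2}}$ with $(d_k)\in\ell^2$ yields $\big(2^{j(s_1+s_2-1)}\|\dot\Delta_j\dot T_f g\|_{L^2}\big)_j\in\ell^2$ with norm $\lesssim\|f\|_{\dot H^{s_1}}\|g\|_{\dot H^{s_2}}$, and $\dot T_g f$ is symmetric using $s_2<1$. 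For $\dot R(f,g)$ I would use the gain-of-integrability bound $\|\dot\Delta_j(\dot\Delta_k f\,\widetilde{\dot\Delta}_k g)\|_{L^2}\lesssim 2^{j}\|\dot\Delta_k f\|_{L^2}\|\widetilde{\dot\Delta}_k g\|_{L^2}\lesssim 2^{j-k(s_1+s_2)}c_kd_k\|f\|_{\dot H^{s_1}}\|g\|_{\dot H^{s_2}}$ and sum over $k\ge j-N_0$, which converges precisely because $s_1+s_2>0$. The second inequality I would deduce from the Moser-type product estimate $\|fg\|_{\dot H^{\sigma}}\lesssim\|f\|_{L^\infty}\|g\|_{\dot H^{\sigma}}+\|g\|_{L^\infty}\|f\|_{\dot H^{\sigma}}$ for $\sigma>0$ (same three-piece argument, but now $\|\dot S_{k-1}f\|_{L^\infty}$ is bounded directly by $\|f\|_{L^\infty}$), combined with the fact that the functions at play are bounded, since $s_0-1>d/2=1$ and in every application this high-regularity control is accompanied by control of the low frequencies.

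The third inequality is the fractional Leibniz (Kato--Ponce) rule in the horizontal variable $\mathbb{R}^2_h$: applying the Bony decomposition in $x_h$, the paraproducts $\dot T_f g,\dot T_g f$ are estimated by Hölder's inequality in the pairs $(p_i,q_i)$ after replacing $\dot S_{k-1}$ by a multiple of the Hardy--Littlewood maximal operator and summing the dyadic pieces in $\ell^2$ via the Fefferman--Stein vector-valued maximal inequality, while the remainder is handled by the gain-of-integrability trick as above; the endpoint $q_i=\infty$ is the classical Kato--Ponce estimate and the cases $q_i<\infty$ follow from the Coifman--Meyer multiplier theorem. Finally, for the fourth inequality I would split $fg=\dot T_g f+\dot T_f g+\dot R(f,g)$ once more: $\dot T_g f$ and $\dot R(f,g)$ are controlled by $\|g\|_{L^\infty}\|f\|_{\dot H^{s_1}}$ (for $\dot R$ one splits the diagonal sum at frequency $1$ and uses the $\dot H^{s_2}$ bound on the low modes of $g$ and the $\dot H^{s_0-1}$ bound on the high modes, valid since $s_1+s_2>0$ or $s_1+s_0-1>0$), and $\dot T_f g$ is controlled by $\|f\|_{\dot H^{s_1}}$ times $\sup_k 2^k\|\dot\Delta_k g\|_{L^2}\lesssim\|\dot\Lambda_h^{s_0-1,s_2}g\|_{L^2}$ when $s_1<1$, the endpoint $s_1=1$ being covered by the embedding $\dot H^{1}(\mathbb{R}^2)\hookrightarrow\mathrm{BMO}$ and the $\mathrm{BMO}$-boundedness of the paraproduct; here $s_2<1<s_0-1$ is used both to embed $\dot H^{s_2}\cap\dot H^{s_0-1}(\mathbb{R}^2)\hookrightarrow L^\infty$ (low frequencies tamed by $s_2<d/2$, high ones by $s_0-1>d/2$) and, by interpolation, to bound $\|g\|_{\dot H^1}$.

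I expect the main obstacles to be the endpoint regularities — $s_i$ close to $\pm1$ in the first estimate, and $s_1=1$ (or $s_1\le0$ with small $s_2$) in the fourth — where the geometric series arising in the paraproduct or in the remainder becomes critical and must be summed against an $\ell^2$ sequence rather than naively; this is precisely why the stated ranges of $s_1,s_2,s_0$ are needed. A second nontrivial point is the $L^p$-valued third estimate, which genuinely requires the Fefferman--Stein maximal inequality or the Coifman--Meyer multiplier theorem rather than plain Bernstein inequalities. Since all of these ingredients are classical, I would carry out the three-piece bookkeeping above and cite \cite{BCD} for the underlying maximal-function and multiplier results.
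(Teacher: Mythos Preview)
The paper does not prove this lemma at all: it is stated as a classical result and attributed directly to \cite{BCD}, with no argument given. Your proposal --- Bony's paraproduct decomposition, Bernstein inequalities, and the Fefferman--Stein/Coifman--Meyer machinery for the $L^p$ Leibniz rule --- is exactly the standard route taken in \cite{BCD} to establish such estimates, so your sketch is correct and in fact supplies considerably more detail than the paper itself.
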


	\begin{lem}[Korn-Poincar\'{e}'s inequality, Lemma 2.7 in \cite{Beale-1981}]\label{Korn's_inequality}
		Let $\Omega$ be defined by \eqref{def-domain-1}, then there exists a positive constant $C_{korn}$, independent of $u$, such that
		\begin{equation*}
			\|u\|_{H^1(\Omega)}\leq C_{korn}\|\mathbb{D}(u)\|_{L^2(\Omega)}
		\end{equation*}
		for all $u\in H^1(\Omega)$ with $u=0$ on $\Sigma_b$.
	\end{lem}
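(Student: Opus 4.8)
The plan is to dispose of the zeroth‑order term by a vertical Poincar\'e inequality and then reduce the resulting Korn inequality to a family of one‑dimensional estimates via a partial Fourier transform. First, since $u=0$ on $\Sigma_b$ and $\Omega$ has finite depth $\underline{b}$, writing $u(x_1,x_h)=\int_{-\underline{b}}^{x_1}\partial_1 u(s,x_h)\,ds$ and using Cauchy--Schwarz gives $\|u\|_{L^2(\Omega)}\le \underline{b}\,\|\partial_1 u\|_{L^2(\Omega)}\le \underline{b}\,\|\nabla u\|_{L^2(\Omega)}$, hence $\|u\|_{H^1(\Omega)}^2\le(1+\underline{b}^{\,2})\|\nabla u\|_{L^2(\Omega)}^2$. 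It therefore suffices to prove the Korn inequality $\|\nabla u\|_{L^2(\Omega)}\le C\,\|\mathbb{D}(u)\|_{L^2(\Omega)}$.

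For that, I would begin from the pointwise identity $|\mathbb{D}(u)|^2=2|\nabla u|^2+2\,\partial_i u^j\,\partial_j u^i$. Integrating over $\Omega$, integrating by parts twice in the cross term (first in $x_i$, then in $x_j$; the boundary terms on $\Sigma_b$ drop out because $u|_{\Sigma_b}=0$), and integrating once more by parts in the horizontal variables on $\Sigma_0\simeq\mathbb{R}^2$ yields
\[
\int_\Omega|\nabla u|^2\,dx=\frac12\int_\Omega|\mathbb{D}(u)|^2\,dx-\int_\Omega(\nabla\cdot u)^2\,dx+2\int_{\Sigma_0}u^1\,(\nabla_h\cdot u^h)\,dx_h .
\]
Discarding the non‑positive term $-\int_\Omega(\nabla\cdot u)^2$, the whole problem reduces to absorbing the surface integral over $\Sigma_0$ into the left‑hand side.

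That surface term is the heart of the matter: because $u$ does not vanish on $\Sigma_0$, a crude trace bound for $\nabla_h\cdot u^h$ on $\Sigma_0$ would cost one derivative too many. The clean way around this is to take the Fourier transform in $x_h$: by Plancherel the Korn inequality becomes a bound, \emph{uniform} in the horizontal frequency $\xi\in\mathbb{R}^2$, of $\int_{-\underline{b}}^0(|\partial_1\widehat u|^2+|\xi|^2|\widehat u|^2)\,dx_1$ by $\int_{-\underline{b}}^0|\widehat{\mathbb{D}(u)}(\cdot,\xi)|^2\,dx_1$. Rotating in $x_h$ so that $\xi=(|\xi|,0)$, the components of $\widehat{\mathbb{D}(u)}$ constitute a first‑order ODE system for $\widehat u(\cdot,\xi)$ on $(-\underline{b},0)$, which together with the homogeneous condition $\widehat u(-\underline{b},\xi)=0$ can be solved explicitly, and the full $|\xi|$‑weighted gradient is then recovered by a one‑dimensional interpolation inequality of Gagliardo--Nirenberg type on $(-\underline{b},0)$.

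The step I expect to be the main obstacle is the \emph{uniformity in $\xi$ as $|\xi|\to\infty$}: the boundary values at $x_1=0$ produced by the integrations by parts come with a factor $|\xi|$, so the naive estimates degenerate there. This is exactly resolved by exploiting the vanishing of the vertical trace at $x_1=-\underline{b}$, which forces the would‑be‑large boundary contribution to be controlled by $\|\partial_1\widehat u^1(\cdot,\xi)\|_{L^2}$ together with the quadratic gain in the interpolation inequality (recall $\widehat u^1(\cdot,\xi)$ is the derivative of a function vanishing to second order at $x_1=-\underline{b}$), which keeps the constant bounded. An alternative would be to prove Korn's second inequality $\|\nabla u\|_{L^2(\Omega)}^2\lesssim\|\mathbb{D}(u)\|_{L^2(\Omega)}^2+\|u\|_{L^2(\Omega)}^2$ by localizing the bounded‑domain Korn inequality, and then remove the lower‑order term by a compactness argument using that the only infinitesimal rigid displacement lying in $H^1(\Omega)$ is $0$; the non‑compactness of the infinite slab is again the difficulty, circumvented by the same horizontal‑frequency decomposition.
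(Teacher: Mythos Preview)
The paper does not supply its own proof of this lemma; it is quoted directly from Beale's 1981 paper (Lemma~2.7 there) and used as a black box. Your outline is essentially Beale's argument: reduce to $\|\nabla u\|_{L^2}\lesssim\|\mathbb D(u)\|_{L^2}$ via the vertical Poincar\'e inequality, take the horizontal Fourier transform, and for each frequency $\xi$ control the two pieces not given directly by the components of $\widehat{\mathbb D(u)}$, namely $|\xi|\,\|\hat u^1\|_{L^2(-\underline b,0)}$ and $\|\partial_1\hat u^2\|_{L^2(-\underline b,0)}$ (after rotating so $\xi=(|\xi|,0)$). Expanding $\|\partial_1\hat u^2+i|\xi|\,\hat u^1\|_{L^2}^2$ and integrating by parts produces exactly the boundary term $2|\xi|\,\mathrm{Im}\big(\hat u^1(0)\overline{\hat u^2(0)}\big)$ plus a bulk term bounded by $\|\partial_1\hat u^1\|_{L^2}\cdot|\xi|\,\|\hat u^2\|_{L^2}$, both of whose factors are already controlled. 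The boundary term is then absorbed via the trace interpolation $|f(0)|^2\le 2\|f\|_{L^2}\|f'\|_{L^2}$ (valid since $f(-\underline b)=0$) applied to $\hat u^1$ and $\hat u^2$, followed by Young's inequality with a small parameter; this is precisely the ``one-dimensional Gagliardo--Nirenberg'' step you alluded to, and it is what keeps the constant uniform as $|\xi|\to\infty$.

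One small correction: your parenthetical claim that ``$\widehat u^1(\cdot,\xi)$ is the derivative of a function vanishing to second order at $x_1=-\underline b$'' is neither correct nor needed. All that is used is $\hat u(-\underline b,\xi)=0$; the trace interpolation above already yields the required quadratic gain. Your alternative route via Korn's second inequality plus a compactness argument would, as you yourself note, run into the lack of compact embedding on the infinite slab and ultimately requires the same frequency-by-frequency analysis, so it offers no shortcut here.
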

	
	\begin{lem}[Poincar\'{e}'s inequality, Lemma A.10 in \cite{Guo-Tice-1}]\label{lem-korn-2}
		Let $\Omega$ be defined by \eqref{def-domain-1}, then there holds
		\begin{equation*}\label{Poincare-1-1}
			\begin{split}
				&\|f\|_{L^2(\Omega)} \lesssim\,\| f\|_{L^2(\Sigma_0)}+\| \partial_1f\|_{L^2(\Omega)},\quad \|f\|_{L^{\infty}(\Omega)} \lesssim\,\| f\|_{L^{\infty}(\Sigma_0)}+\| \partial_1f\|_{L^{\infty}(\Omega)}.
			\end{split}
		\end{equation*}
	\end{lem}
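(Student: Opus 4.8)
The plan is to prove both inequalities by the one-dimensional fundamental theorem of calculus in the vertical variable $x_1$, applied slice by slice in the horizontal variable $x_h$, combined with Fubini's theorem; the constants produced will depend only on the depth $\underline{b}$, which is a fixed parameter of the domain. First I would note that it suffices to work with a function $f$ for which $\partial_1 f \in L^2(\Omega)$ (resp. $L^\infty(\Omega)$) and whose trace on $\Sigma_0$ is well defined and enters the right-hand side: for a.e. $x_h \in \mathbb{R}^2$ the vertical slice $f(\cdot, x_h)$ then lies in $H^1(-\underline{b},0)$ (resp. $W^{1,\infty}(-\underline{b},0)$), hence is absolutely continuous up to the endpoint $x_1 = 0$ by the one-dimensional Sobolev embedding $H^1(-\underline{b},0)\hookrightarrow C([-\underline{b},0])$, and this is the interpretation of $f(0,x_h)$; it agrees with the usual trace on $\Sigma_0$. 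Consequently, for a.e. $(x_1, x_h) \in \Omega$,
$$f(x_1, x_h) = f(0, x_h) - \int_{x_1}^0 \partial_1 f(t, x_h)\, dt, \qquad \text{so} \qquad |f(x_1, x_h)| \le |f(0, x_h)| + \int_{-\underline{b}}^0 |\partial_1 f(t, x_h)|\, dt.$$

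For the $L^\infty$ estimate I would take the essential supremum over $x_1$ in the pointwise bound, using $\int_{-\underline{b}}^0 |\partial_1 f(t, x_h)|\, dt \le \underline{b}\,\|\partial_1 f\|_{L^\infty(\Omega)}$ and $|f(0,x_h)| \le \|f\|_{L^\infty(\Sigma_0)}$, and then take the essential supremum over $x_h$, obtaining $\|f\|_{L^\infty(\Omega)} \le \|f\|_{L^\infty(\Sigma_0)} + \underline{b}\,\|\partial_1 f\|_{L^\infty(\Omega)}$, which is the second inequality. For the $L^2$ estimate I would square the pointwise bound, use $(a+b)^2 \le 2a^2 + 2b^2$ and Cauchy--Schwarz in the $t$-integral to get
$$|f(x_1, x_h)|^2 \le 2|f(0,x_h)|^2 + 2\underline{b}\int_{-\underline{b}}^0 |\partial_1 f(t, x_h)|^2\, dt,$$
whose right-hand side is independent of $x_1$; integrating over $x_1 \in (-\underline{b},0)$ and then over $x_h \in \mathbb{R}^2$ and invoking Fubini yields $\|f\|_{L^2(\Omega)}^2 \le 2\underline{b}\,\|f\|_{L^2(\Sigma_0)}^2 + 2\underline{b}^2\,\|\partial_1 f\|_{L^2(\Omega)}^2$, hence the first inequality after taking square roots and absorbing the $\underline{b}$-dependence into the implicit constant.

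I do not expect a serious obstacle here. The only point deserving care is justifying that the trace $f(0,\cdot)$ appearing in the statement is meaningful even though $\Omega$ is unbounded in the horizontal directions and only the vertical derivative $\partial_1 f$ is assumed controlled: this is precisely handled by working with a.e. vertical slice, where the one-dimensional embedding $H^1(-\underline{b},0)\hookrightarrow C([-\underline{b},0])$ needs no horizontal regularity. If one prefers to avoid the slice argument, the whole computation can first be carried out for $f \in C^1(\overline{\Omega})$ (or for $f$ Schwartz in $x_h$) and then extended by density in the relevant function space.
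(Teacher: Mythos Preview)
Your proof is correct and is exactly the standard argument for this type of slab Poincar\'{e} inequality: write $f(x_1,x_h)=f(0,x_h)-\int_{x_1}^0\partial_1 f(t,x_h)\,dt$, then take $L^\infty$ or $L^2$ norms and use Cauchy--Schwarz in the vertical integral. The paper does not give its own proof of this lemma; it simply quotes it from \cite{Guo-Tice-1} (Lemma~A.10 there), where the proof is precisely the one you wrote.
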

In order to extend the interface boundary forms of $\partial_1v$ to the interior domain of the fluid, let us first introduce $\mathcal{H}(f)$ as the harmonic extension of $f|_{\Sigma_0}$ into $\Omega$:
\begin{equation*}\label{harmonic-ext-xi-1-1}
\begin{cases}
   &\Delta \mathcal{H}(f)=0 \quad\mbox{in} \quad \Omega,\\
   &\mathcal{H}(f)|_{\Sigma_0}=f|_{\Sigma_0},\quad \mathcal{H}(f)|_{\Sigma_b}=0.
\end{cases}
\end{equation*}

	\begin{lem}[Lemma 4.5 in \cite{Gui2020}]\label{harmonic-extension}
    Let $s\in \mathbb{R}$, $r\geq 2$, and $\Omega$ be defined by \eqref{def-domain-1}. For the harmonic extension $\mathcal{H}(f)$ of $f=f(t, x_1, x_h)$ (with $t \in\mathbb{R}^+$, $( x_1, x_h) \in \Omega$), there hold
\begin{equation*}\label{est-harmonic-ext-GL-1}
\begin{split}
 &\|\mathcal{H}(f)\|_{H^1(\Omega)} \lesssim \|f\|_{H^{\frac{1}{2}}(\Sigma_0)}, \quad \|\dot{\Lambda}_h^{s}\mathcal{H}(f)\|_{H^1(\Omega)} \lesssim \|\dot{\Lambda}_h^{s}f\|_{H^{\frac{1}{2}}(\Sigma_0)}, \\
 &\|\dot{\Lambda}_h^{s}\mathcal{H}(f)\|_{H^{2}(\Omega)} \lesssim \|\dot{\Lambda}_h^{s}f\|_{H^{\frac{3}{2}}(\Sigma_0)},\quad\|\mathcal{H}(f)\|_{H^{r}(\Omega)} \lesssim \|f\|_{H^{r-\frac{1}{2}}(\Sigma_0)},\\
 &\|\partial_t\mathcal{H}(f)\|_{H^1(\Omega)} \lesssim \|\partial_tf\|_{H^{\frac{1}{2}}(\Sigma_0)}.
\end{split}
\end{equation*}
    \end{lem}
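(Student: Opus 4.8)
The plan is to diagonalize the harmonic-extension operator by the partial Fourier transform $\mathcal{F}_h$ in the horizontal variable $x_h\in\mathbb{R}^2$, which reduces all five estimates to one-dimensional bounds on an explicit kernel.

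First I would apply $\mathcal{F}_h$ to the defining system. Writing $\zeta\in\mathbb{R}^2$ for the dual variable, $\Delta\mathcal{H}(f)=0$ becomes $(\partial_1^2-|\zeta|^2)\widehat{\mathcal{H}(f)}(x_1,\zeta)=0$ for $x_1\in(-\underline{b},0)$, with $\widehat{\mathcal{H}(f)}(0,\zeta)=\hat f(\zeta)$ and $\widehat{\mathcal{H}(f)}(-\underline{b},\zeta)=0$, whence
\[
\widehat{\mathcal{H}(f)}(x_1,\zeta)=\hat f(\zeta)\,m(x_1,|\zeta|),\qquad m(x_1,\tau):=\frac{\sinh\!\big(\tau(x_1+\underline{b})\big)}{\sinh(\tau\underline{b})}.
\]
Since $m$ depends on $\zeta$ only through $|\zeta|$ and is independent of $t$, the extension operator commutes with every horizontal Fourier multiplier $\dot{\Lambda}_h^s$ and with $\partial_t$, i.e. $\dot{\Lambda}_h^s\mathcal{H}(f)=\mathcal{H}(\dot{\Lambda}_h^s f)$ and $\partial_t\mathcal{H}(f)=\mathcal{H}(\partial_t f)$. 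This reduces the second, third and fifth inequalities to, respectively, the first estimate applied to $\dot{\Lambda}_h^s f$, the $H^2$-estimate applied to $\dot{\Lambda}_h^s f$, and the first estimate applied to $\partial_t f$; hence it suffices to prove $\|\mathcal{H}(f)\|_{H^r(\Omega)}\lesssim\|f\|_{H^{r-1/2}(\Sigma_0)}$ for $r=1$ and for $r\ge2$.

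The technical core is a pair of $x_1$-integral bounds on the kernel, uniform in $\tau>0$. Because $0\le\tau(x_1+\underline{b})\le\tau\underline{b}$ on $(-\underline{b},0)$ and $\sinh$ is increasing, $0\le m(x_1,\tau)\le1$, while also $m(x_1,\tau)\le e^{\tau x_1}/(1-e^{-2\tau\underline{b}})$; using $\int_{-\underline{b}}^0 e^{2\tau x_1}\,dx_1\le(2\tau)^{-1}$, the identity $\partial_1 m(x_1,\tau)=\tau\cosh(\tau(x_1+\underline{b}))/\sinh(\tau\underline{b})$, and $\coth(\tau\underline{b})\lesssim1+\tau^{-1}$, these yield
\[
\int_{-\underline{b}}^{0}|m(x_1,\tau)|^2\,dx_1\lesssim(1+\tau)^{-1},\qquad
\int_{-\underline{b}}^{0}|\partial_1 m(x_1,\tau)|^2\,dx_1\lesssim 1+\tau.
\]
Because $\mathcal{H}(f)$ is harmonic, $\partial_1^2\widehat{\mathcal{H}(f)}=|\zeta|^2\widehat{\mathcal{H}(f)}$, so a $\partial_1$-derivative of even (resp. odd) order acting on $\widehat{\mathcal{H}(f)}$ equals a power of $|\zeta|$ times $\hat f\,m$ (resp. times $\hat f\,\partial_1 m$); inserting this together with the two displayed bounds into Plancherel in $x_h$ gives, for every integer $r\ge1$,
\[
\|\mathcal{H}(f)\|_{H^r(\Omega)}^2\lesssim\int_{\mathbb{R}^2}(1+|\zeta|)^{2r-1}\,|\hat f(\zeta)|^2\,d\zeta=\|f\|_{H^{r-1/2}(\Sigma_0)}^2 ,
\]
which is the first inequality when $r=1$ and the fourth one for integer $r\ge2$ (the case $r=2$ being the $H^2$-estimate invoked above); fractional $r\ge2$ then follows by interpolation, or directly from the same representation since for the slab $\Omega$ and a harmonic $u$ the norm $\|u\|_{H^r(\Omega)}$ is comparable to $\|(1+|D_h|)^r u\|_{L^2(\Omega)}+\|\partial_1^{\lceil r\rceil}u\|_{L^2(\Omega)}$.

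The main obstacle is exactly this kernel analysis: one must control $m(\cdot,\tau)$ uniformly across the low-frequency regime $\tau\underline{b}\lesssim1$, where $m(x_1,\tau)\approx(x_1+\underline{b})/\underline{b}$ is bounded but has no decay in $x_1$, and the high-frequency regime $\tau\underline{b}\gtrsim1$, where $m(x_1,\tau)\approx e^{\tau x_1}$ decays in $x_1$ but $\partial_1 m$ grows like $\tau$; the opposite weights $(1+\tau)^{\mp1}$ in the two $x_1$-integrals are precisely what encodes the half-derivative gained (resp. lost) in passing between $\Sigma_0$ and $\Omega$. The remaining points — the commutation relations, the assembly of the five estimates, and the treatment of the fractional $H^r(\Omega)$ norm — are routine.
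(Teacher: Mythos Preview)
Your argument is correct. The paper does not supply its own proof of this lemma; it is quoted verbatim as Lemma~4.5 of \cite{Gui2020} and used as a black box. Your approach --- diagonalising the harmonic extension via the horizontal Fourier transform, writing the solution through the explicit kernel $m(x_1,\tau)=\sinh(\tau(x_1+\underline{b}))/\sinh(\tau\underline{b})$, and reducing everything to the two one-dimensional integral bounds on $m$ and $\partial_1 m$ --- is exactly the standard route for such estimates on a slab, and is presumably what the cited reference does as well. The commutation observations $\dot\Lambda_h^s\mathcal H(f)=\mathcal H(\dot\Lambda_h^s f)$ and $\partial_t\mathcal H(f)=\mathcal H(\partial_t f)$ cleanly reduce five inequalities to the single scale of $H^r$ bounds, and your kernel analysis (splitting into the regimes $\tau\underline b\lesssim1$ and $\tau\underline b\gtrsim1$) is accurate. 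The only point worth tightening in a final write-up is the last sentence on fractional $r\ge2$: interpolation between the integer cases is the clean way to finish, whereas the alternative norm-equivalence you sketch would need a line of justification.
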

	\begin{lem}[Lemma 2.8 in \cite{Beale-1981}]\label{Beale-lemma-2.8}
		Let $\Omega$ be defined by \eqref{def-domain-1}, and for $f_1\in H^{r-2}(\Omega)$, $f_2\in H^{r-\frac{3}{2}}(\Sigma_0)$, $2\leq r\leq 5$, there is a unique solution $u\in H^r(\Omega)$ of
		\begin{equation*}
			\begin{cases}
				\Delta u=f_1  &\mbox{in} \quad \Omega,\\
				\partial_1 u=f_2 &\mbox{on}  \quad \Sigma_0,\\
				u=0 &\mbox{on} \quad \Sigma_b,\\
			\end{cases}
		\end{equation*}
		and the solution satisfies the estimate
		\begin{equation*}
			\|u\|_{H^r(\Omega)}\leq C(\|f_1\|_{H^{r-2}(\Omega)}+\|f_2\|_{H^{r-\frac{3}{2}}(\Sigma_0)}).
		\end{equation*}
	\end{lem}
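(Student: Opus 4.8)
The plan is to exploit the fact that $\Omega$ is a flat horizontal slab: I would apply the partial Fourier transform $\mathcal F_h$ in the horizontal variables $x_h\in\mathbb R^2$ and reduce the problem to a $\zeta$-parametrised family of two-point ODE boundary value problems on $(-\underline b,0)$. Writing $\widehat u(x_1,\zeta):=\mathcal F_h\big(u(x_1,\cdot)\big)(\zeta)$, the system becomes, for a.e. $\zeta\in\mathbb R^2$,
\[
\partial_1^2\widehat u-|\zeta|^2\widehat u=\widehat f_1(\cdot,\zeta)\ \text{ on }(-\underline b,0),\qquad
\partial_1\widehat u(0,\zeta)=\widehat f_2(\zeta),\qquad \widehat u(-\underline b,\zeta)=0 .
\]
The steps I would then carry out are: (i) solve each ODE explicitly by a Green's function $G(x_1,y_1;|\zeta|)$ together with a boundary kernel $K(x_1;|\zeta|)$; (ii) prove uniform-in-$\zeta$ bounds on $G$, $K$ and their $x_1$-derivatives that display the gain of two derivatives coming from $f_1$ and of $\tfrac32$ derivatives coming from the Neumann datum $f_2$; (iii) recombine by Plancherel, using the characterisation of $H^r(\Omega)$ and $H^{r-3/2}(\Sigma_0)$ by the corresponding $\zeta$-weighted $L^2$ norms, to obtain existence and the stated estimate.

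For uniqueness I would use the energy identity: if $u\in H^r(\Omega)$ solves the problem with $f_1=0$, $f_2=0$, then testing $\Delta u=0$ with $u$ and integrating over $\Omega$ makes both boundary terms vanish (by the Dirichlet condition on $\Sigma_b$ and the Neumann condition on $\Sigma_0$), whence $\int_\Omega|\nabla u|^2=0$; thus $u$ is constant, and a constant lying in $L^2(\Omega)$ over the infinite slab must be $0$. The same computation on $(-\underline b,0)$ for the transformed ODE — including the degenerate case $\zeta=0$, where the homogeneous solutions are affine — shows the homogeneous two-point problem has only the trivial solution for every $\zeta$, so $G(\,\cdot\,,\,\cdot\,;|\zeta|)$ and $K(\,\cdot\,;|\zeta|)$ are well defined for all $\zeta$.

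For existence and the estimate I would use the homogeneous solutions $\sinh\!\big(|\zeta|(x_1+\underline b)\big)$ (which vanishes on $\Sigma_b$) and $\cosh(|\zeta|x_1)$ (whose $\partial_1$ vanishes on $\Sigma_0$) to write
\[
\widehat u(x_1,\zeta)=\int_{-\underline b}^{0}G(x_1,y_1;|\zeta|)\,\widehat f_1(y_1,\zeta)\,dy_1+K(x_1;|\zeta|)\,\widehat f_2(\zeta),
\]
with $G$ assembled from those two solutions divided by their Wronskian (so that $(\partial_1^2-|\zeta|^2)G=\delta_{y_1}$ under the homogeneous boundary conditions) and $K(x_1;|\zeta|)=\sinh\!\big(|\zeta|(x_1+\underline b)\big)\big/\big(|\zeta|\cosh(|\zeta|\underline b)\big)$, with the natural limit $K(x_1;0)=x_1+\underline b$. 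Splitting the frequency range into $|\zeta|\lesssim1$, where $G$, $K$ and their $x_1$-derivatives are $O(1)$ (being essentially affine in $x_1$), and $|\zeta|\gtrsim1$, where they decay exponentially away from the boundary they carry while each $\partial_1$ costs one power of $|\zeta|$ and the $\cosh(|\zeta|\underline b)$ in the denominator supplies the matching power, I would extract the pointwise bounds $\int_{-\underline b}^0|G(x_1,y_1;|\zeta|)|\,dy_1\lesssim\langle\zeta\rangle^{-2}$ and $\|\partial_1^{\,j}K(\,\cdot\,;|\zeta|)\|_{L^2_{x_1}}\lesssim\langle\zeta\rangle^{\,j-3/2}$, together with $\partial_1^2\widehat u=|\zeta|^2\widehat u+\widehat f_1$ for the top-order vertical derivative. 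Combining these via Schur's test in $y_1$ should yield, for each integer $0\le j\le r$ and a.e. $\zeta$,
\[
\langle\zeta\rangle^{2(r-j)}\big\|\partial_1^{\,j}\widehat u(\,\cdot\,,\zeta)\big\|_{L^2_{x_1}}^2\ \lesssim\ \langle\zeta\rangle^{2(r-2)}\big\|\widehat f_1(\,\cdot\,,\zeta)\big\|_{L^2_{x_1}}^2+\langle\zeta\rangle^{2(r-\frac32)}\big|\widehat f_2(\zeta)\big|^2 ,
\]
and integrating in $\zeta$ and summing in $j$ (with the routine modification for non-integer $r$, which is precisely why one uses the $\zeta$-weighted description of the spaces) gives $\|u\|_{H^r(\Omega)}\lesssim\|f_1\|_{H^{r-2}(\Omega)}+\|f_2\|_{H^{r-3/2}(\Sigma_0)}$. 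This argument actually works for every $r\ge0$; the restriction $2\le r\le5$ is only what is needed later and keeps the constants uniform.

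I expect the one genuinely technical point to be step (ii): extracting from the explicit kernels the sharp uniform-in-$\zeta$ bounds, and in particular organising the transition between the low-frequency (affine) regime and the high-frequency (exponentially localised) regime so that exactly the claimed amount of smoothing is gained and no less. The remainder is bookkeeping, modulo the mild care required because $\mathbb R^2_h$ is unbounded: I would run the Fourier/Plancherel manipulations first for Schwartz data and then pass to the limit.
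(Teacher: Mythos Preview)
The paper does not supply its own proof of this lemma: it is quoted verbatim as Lemma~2.8 of Beale~\cite{Beale-1981} and used as a black box in Section~\ref{sect-3}. So there is no ``paper's proof'' to compare against here.

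Your proposed approach---horizontal Fourier transform reducing to a $|\zeta|$-parametrised two-point ODE on $(-\underline b,0)$, explicit Green's kernel and boundary kernel built from $\sinh$/$\cosh$, uniform-in-$\zeta$ bounds split into low and high frequencies, and Plancherel recombination---is exactly the method Beale uses in \cite{Beale-1981} for this and the companion elliptic/Stokes estimates on the slab. The sketch is correct; the uniqueness argument via the energy identity is fine, and the kernel bounds you state (Schur-type $\langle\zeta\rangle^{-2}$ for the volume Green's function, $\langle\zeta\rangle^{j-3/2}$ in $L^2_{x_1}$ for $\partial_1^j K$) are the right ones. One small caveat: the claim that ``this argument actually works for every $r\ge 0$'' overshoots a bit, since for $r<3/2$ the Neumann trace space $H^{r-3/2}(\Sigma_0)$ has negative index and the formulation needs reinterpretation; but within the stated range $2\le r\le 5$ your outline is sound.
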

	\begin{lem}[Chapter 6 in \cite{Evans}]\label{Stokes-lemma-1}
		Let $\Omega$ be defined by \eqref{def-domain-1}, and for $f\in H^{r-2}(\Omega)$,  $r\geq 2 $, there is a unique solution $u\in H^r(\Omega)$ of
		\begin{equation*}
			\begin{cases}
				\Delta u=f  &\text{in} \quad\Omega,\\
				u=0 &\text{on}  \quad\partial\Omega,\\
			\end{cases}
		\end{equation*}
		and the solution satisfies the estimate
		\begin{equation*}
			\|u\|_{H^r(\Omega)}\leq C\|f\|_{H^{r-2}(\Omega)}.
		\end{equation*}
	\end{lem}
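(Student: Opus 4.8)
The plan is to run the classical elliptic regularity argument for the Dirichlet Laplacian, taking advantage of the fact that $\partial\Omega=\Sigma_0\cup\Sigma_b$ consists of the two horizontal hyperplanes $\{x_1=0\}$ and $\{x_1=-\underline{b}\}$, so that $\Omega$ and its boundary are invariant under translations in the horizontal variable $x_h$. The unbounded horizontal extent of the slab plays no role, because every difference quotient and derivative used below is taken in a horizontal direction, while coercivity uses only the bounded vertical extent.

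\emph{Step 1: weak solution and uniqueness.} Since $\Omega$ is bounded in the $x_1$-direction, Poincar\'e's inequality makes the Dirichlet form $a(u,w):=\int_\Omega\nabla u\cdot\nabla w$ coercive on $H^1_0(\Omega)$. As $f\in H^{r-2}(\Omega)\subset L^2(\Omega)\subset H^{-1}(\Omega)$ for $r\geq2$, the Lax--Milgram theorem produces a unique weak solution $u\in H^1_0(\Omega)$ of $a(u,w)=-\langle f,w\rangle$ for all $w\in H^1_0(\Omega)$. Uniqueness in $H^r(\Omega)\subset H^1_0(\Omega)$ is immediate: testing the homogeneous problem against $u$ gives $\int_\Omega|\nabla u|^2=0$, hence $u\equiv0$.

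\emph{Step 2: the case $r=2$ via tangential difference quotients.} For $j\in\{2,3\}$ and $\tau\neq0$ small, put $\delta_j^\tau w(x):=\tau^{-1}(w(x+\tau e_j)-w(x))$. Because translation by $\tau e_j$ preserves $\Omega$ and $\partial\Omega$, the function $\delta_j^{-\tau}\delta_j^\tau u\in H^1_0(\Omega)$ is an admissible test function, and the usual computation (as in Chapter~6 of \cite{Evans}) yields, uniformly in $\tau$,
\[
\|\nabla\delta_j^\tau u\|_{L^2(\Omega)}\lesssim\|f\|_{L^2(\Omega)}+\|\nabla u\|_{L^2(\Omega)}\lesssim\|f\|_{L^2(\Omega)}.
\]
Letting $\tau\to0$ gives $\nabla_h u\in H^1(\Omega)$ with $\|\nabla\nabla_h u\|_{L^2(\Omega)}\lesssim\|f\|_{L^2(\Omega)}$. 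The equation then reads $\partial_1^2u=f-\partial_2^2u-\partial_3^2u\in L^2(\Omega)$, so $u\in H^2(\Omega)$ and $\|u\|_{H^2(\Omega)}\lesssim\|f\|_{L^2(\Omega)}$, which is the claim for $r=2$.

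\emph{Step 3: higher regularity by induction on $r$.} Suppose the lemma holds for every index in $[2,r-1]$ and let $r\geq3$, $f\in H^{r-2}(\Omega)$. For $j\in\{2,3\}$, the function $\partial_ju$ (rigorously, the limit of $\delta_j^\tau u$) is the unique weak solution of the same Dirichlet problem with right-hand side $\partial_jf\in H^{r-3}(\Omega)=H^{(r-1)-2}(\Omega)$; since $r-1\geq2$, the inductive hypothesis gives $\partial_ju\in H^{r-1}(\Omega)$ with $\|\partial_ju\|_{H^{r-1}(\Omega)}\lesssim\|\partial_jf\|_{H^{r-3}(\Omega)}\lesssim\|f\|_{H^{r-2}(\Omega)}$. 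Consequently $\partial_2^2u+\partial_3^2u=\nabla_h\cdot\nabla_hu\in H^{r-2}(\Omega)$. Now take a multi-index $\alpha$ with $|\alpha|=r$: if $\alpha_1\geq2$, write $\partial^\alpha u=\partial^{\alpha'}(f-\partial_2^2u-\partial_3^2u)$ with $|\alpha'|=r-2$, which lies in $L^2(\Omega)$ by the previous line; if $\alpha_1\leq1$, then $\alpha$ contains a horizontal derivative, so $\partial^\alpha u=\partial^\beta(\partial_ju)$ with $|\beta|=r-1$ and some $j\in\{2,3\}$, again in $L^2(\Omega)$. Summing these bounds gives $u\in H^r(\Omega)$ with $\|u\|_{H^r(\Omega)}\lesssim\|f\|_{H^{r-2}(\Omega)}$, closing the induction.

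\emph{Main obstacle.} There is no deep difficulty; the only point requiring care is the regularity \emph{up to} the boundary, which for a general domain would force a flattening argument, but here the two boundary components are already flat and horizontal, so horizontal difference quotients are directly admissible, and one merely checks (as noted above) that the infinite horizontal extent of $\Omega$ never enters. As an alternative one may obtain the whole statement at once via the partial Fourier transform in $x_h$: the problem reduces to the two-point ODE $\partial_1^2\hat u-|\xi_h|^2\hat u=\hat f$ on $(-\underline{b},0)$ with $\hat u(0)=\hat u(-\underline{b})=0$, whose explicit Green's function yields $\||\xi_h|^{r}\hat u\|_{L^2}+\||\xi_h|^{r-2}\partial_1^2\hat u\|_{L^2}\lesssim\||\xi_h|^{r-2}\hat f\|_{L^2}$ after integrating in $\xi_h$; I would keep the difference-quotient argument as the main line and relegate the Fourier computation to a remark.
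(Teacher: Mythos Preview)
The paper does not prove this lemma; it simply cites it as a standard result from Chapter~6 of Evans~\cite{Evans}. Your argument is precisely the Evans-style proof (Lax--Milgram for the weak solution, tangential difference quotients for $H^2$ regularity, then bootstrapping) specialized to the slab, exploiting that the flat horizontal boundaries make the tangential difference quotients globally admissible without any localization or flattening. This is correct and is exactly the approach the citation points to; the only small gap is that your induction in Step~3 treats $r$ as an integer, whereas the statement allows real $r\geq 2$ --- but this is routinely closed by interpolating between the integer cases you have established.
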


	\begin{lem}[\cite{Agmon-D-N-1964}]\label{lem-3-5}
		Let $\Omega$ be defined by \eqref{def-domain-1}, $r\geq2$. Suppose $v,q$ solve
		\[
		\begin{cases}
			-\nu \nabla \cdot \mathbb{D} (v) + \nabla q = f_1\in H^{r-2}(\Omega) \\
			\nabla \cdot v=f_2\in H^{r-1}(\Omega),\\
			q n_0 - \nu \mathbb{D} (u)n_0 =f_3 \in H^{r-\frac{3}{2}}(\Sigma_0),\\
			u|_{\Sigma_b}=0  ,\\
		\end{cases}
		\]
		Then there holds
		\[
		\|\nabla v\|_{H^{r-1}(\Omega)}^2+\|\nabla q\|_{H^{r-2}(\Omega)}^2\lesssim\|f_1\|_{H^{r-2}(\Omega)}^2+\|f_2\|_{H^{r-1}(\Omega)}^2+\|f_3\|_{H^{r-\frac{3}{2}}(\Sigma_0)}^2.
		\]
	\end{lem}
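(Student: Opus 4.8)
\noindent\textbf{Proof strategy for Lemma \ref{lem-3-5}.} The plan is to reduce the inhomogeneous–divergence Stokes system to a divergence–free one carrying a stress boundary condition, to prove the base $H^1$ estimate by a Korn--Poincar\'e energy argument, and then to bootstrap up to $H^r$ by exploiting that $\Sigma_0$ and $\Sigma_b$ are flat hyperplanes.

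First I would remove the divergence. Since $v$ is unconstrained on $\Sigma_0$, one may solve the divergence equation $\nabla\cdot w=f_2$ in $\Omega$ with $w|_{\Sigma_b}=0$ and $\|w\|_{H^r(\Omega)}\lesssim\|f_2\|_{H^{r-1}(\Omega)}$ (a Bogovski\u\i--type lift, built from scalar problems of the type handled in Lemmas \ref{Beale-lemma-2.8}--\ref{Stokes-lemma-1} together with interpolation for non-integer $r$). Then $\tilde v:=v-w$ is divergence free, vanishes on $\Sigma_b$, and solves $-\nu\nabla\cdot\mathbb{D}(\tilde v)+\nabla q=\tilde f_1:=f_1+\nu\nabla\cdot\mathbb{D}(w)\in H^{r-2}(\Omega)$ with boundary condition $q\,n_0-\nu\mathbb{D}(\tilde v)n_0=\tilde f_3:=f_3+\nu\mathbb{D}(w)n_0\in H^{r-\frac32}(\Sigma_0)$, and $\|\tilde f_1\|_{H^{r-2}(\Omega)}+\|\tilde f_3\|_{H^{r-\frac32}(\Sigma_0)}\lesssim\|f_1\|_{H^{r-2}(\Omega)}+\|f_2\|_{H^{r-1}(\Omega)}+\|f_3\|_{H^{r-\frac32}(\Sigma_0)}$. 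It therefore suffices to treat the case $f_2\equiv 0$.

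For the base estimate, I would test the momentum equation against $\tilde v$; integration by parts kills the pressure term (using $\nabla\cdot\tilde v=0$ and $\tilde v|_{\Sigma_b}=0$) and leaves $\tfrac{\nu}{2}\|\mathbb{D}(\tilde v)\|_{L^2(\Omega)}^2=\langle\tilde f_1,\tilde v\rangle-\int_{\Sigma_0}\tilde f_3\cdot\tilde v$, so by Lemma \ref{Korn's_inequality} and the trace theorem $\|\tilde v\|_{H^1(\Omega)}\lesssim\|\tilde f_1\|_{(H^1)'}+\|\tilde f_3\|_{H^{-1/2}(\Sigma_0)}$. The pressure is then recovered from $\nabla q=\tilde f_1+\nu\nabla\cdot\mathbb{D}(\tilde v)$ in the usual de~Rham fashion, with the additive constant for $q$ pinned down through the boundary data $\tilde f_3$. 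Higher regularity is obtained by induction on $\lceil r\rceil$: because the two boundary components are flat, the horizontal difference quotients $\partial_h^{\,\tau}$ commute with the boundary operators, so $(\partial_h\tilde v,\partial_h q)$ solve the same system with data $(\partial_h\tilde f_1,0,\partial_h\tilde f_3)$, which controls all tangential derivatives; the remaining normal derivatives are then read off algebraically from the equations — $\nabla\cdot\tilde v=0$ gives $\partial_1\tilde v^1$, the three momentum equations give $\partial_1^2\tilde v^i$ in terms of tangential derivatives, $\partial_1 q$ and lower order, and $\partial_1 q$ itself is the first momentum component — while near $\Sigma_0$ the undetermined traces $(\partial_1\tilde v,q)|_{\Sigma_0}$ are fixed by inverting the stress boundary condition once the tangential data are known. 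Interpolating for non-integer $r$ and undoing the reduction $v=\tilde v+w$ yields the stated estimate.

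The main obstacle is exactly this last bootstrap step. Unlike the scalar Laplace problems of Lemmas \ref{Beale-lemma-2.8}--\ref{Stokes-lemma-1}, here the pressure enters both the interior equation and the boundary condition, so the normal–derivative estimate at $\Sigma_0$ hinges on checking that the stress boundary condition complements the Stokes operator (the Lopatinski\u\i--Shapiro/complementing condition) and on carefully tracking how $q|_{\Sigma_0}$ propagates into the higher–order estimates; this is precisely the general elliptic regularity machinery of \cite{Agmon-D-N-1964} that the lemma invokes, and the cited theorem can also be applied directly to the reduced divergence–free system to short–circuit the hand computation above.
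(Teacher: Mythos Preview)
The paper does not prove this lemma at all: it is stated with a bare citation to \cite{Agmon-D-N-1964} and used as a black box. Your proposal is therefore not being compared against any argument in the paper, and your final sentence --- that the cited Agmon--Douglis--Nirenberg theory can be applied directly to the reduced divergence--free system --- \emph{is} the paper's ``proof''.

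That said, the sketch you give before that sentence is a correct and self-contained route to the estimate in this flat-slab setting, and is worth recording: the divergence lift is exactly the kind of construction the paper itself carries out in Step~1 of the proof of Theorem~\ref{thm-linear-problim-11} (solving a Poisson problem for a potential, cf.\ \eqref{Psi}--\eqref{Psi_bc}); the base $H^1$ bound via Korn--Poincar\'e and testing is the same mechanism as \eqref{Step-3-1}; and the tangential-difference-quotient / normal-recovery bootstrap is precisely the flat-boundary specialization of the ADN machinery. The one place to be a little more careful is the divergence lift: $\nabla\Psi$ with the boundary conditions of Lemma~\ref{Beale-lemma-2.8} gives $w^1|_{\Sigma_b}=0$ but not $w^h|_{\Sigma_b}=0$, so if you want the full Dirichlet condition on $\Sigma_b$ you need an additional tangential correction (or simply allow $\tilde v|_{\Sigma_b}$ to be a known $H^{r-1/2}$ datum and absorb it into the right-hand side); either fix is routine and does not affect the argument.
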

	
	\begin{lem}[Lions-Magenes Lemma, Lemma 1.2 on page 176 of \cite{Lions-M}]\label{Lions-M-lem}
		Let $X_0$, $X$ and $X_1$ be three Hilbert spaces with $X_0\subset X\subset X_1$. Suppose that $X_0$ is continuously embedded in $X$ and that $X$ is continuously embedded in $X_1$, and $X_1$ is the dual space of $X_0$. Suppose for some $T>0$ that $u\in L^2([0,T];X_0)$ is such that $u_t\in L^2([0,T];X_1)$. Then $u$ is almost everywhere equal to a function continuous from $[0,T]$ into $X$, and moreover the following equality holds in the sense of scalar distribution on $(0,T)$:
		\begin{equation*}
			\frac{1}{2}\frac{\mathrm{d}}{\mathrm{d}t}\|u\|_{X}^2=\langle u_t, u\rangle.
		\end{equation*}
	\end{lem}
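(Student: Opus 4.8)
The plan is to prove the lemma by regularising $u$ in time, verifying the energy identity on the smooth approximants (where it follows at once from the fundamental theorem of calculus in a Hilbert space), and then passing to the limit. The only structural fact we need about the triple is that, under the Gelfand-triple identification attached to $X_1=X_0'$ and $X\cong X'$ (which tacitly uses density of $X_0$ in $X$), the duality bracket $\langle\cdot,\cdot\rangle$ between $X_1$ and $X_0$ restricts to the inner product of $X$, i.e. $\langle f,g\rangle=(f,g)_X$ whenever $f,g\in X$; in particular $\langle w',w\rangle$ makes sense and equals $(w',w)_X$ for $w\in X_0$ with $w'\in X_0$.

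First I would extend $u$ in the time variable. By a reflection-and-cutoff (Stein-type) construction in $t$ near the endpoints $0$ and $T$, one obtains $\bar u\in L^2(\mathbb{R};X_0)$ with $\bar u'\in L^2(\mathbb{R};X_1)$, $\bar u=u$ on $[0,T]$, and both norms controlled by the corresponding norms of $u$ up to absolute constants. Let $(\rho_\varepsilon)_{\varepsilon>0}$ be a standard mollifier on $\mathbb{R}$ and put $u_\varepsilon:=\bar u*\rho_\varepsilon$. Then $u_\varepsilon\in C^\infty(\mathbb{R};X_0)$, $u_\varepsilon\to u$ in $L^2([0,T];X_0)$, and $u_\varepsilon'=\bar u'*\rho_\varepsilon\to u_t$ in $L^2([0,T];X_1)$ as $\varepsilon\to0$; in particular $(u_\varepsilon)$ and $(u_\varepsilon')$ are Cauchy in those spaces.

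For $v\in C^1(\mathbb{R};X_0)$ the scalar map $t\mapsto\|v(t)\|_X^2$ is $C^1$ with derivative $2(v'(t),v(t))_X=2\langle v'(t),v(t)\rangle$, so
\begin{equation*}
\|v(t)\|_X^2-\|v(s)\|_X^2=2\int_s^t\langle v'(\tau),v(\tau)\rangle\,d\tau,\qquad 0\le s,t\le T.
\end{equation*}
Applying this with $v=u_\varepsilon-u_\mu$, bounding $\langle w',w\rangle\le\|w'\|_{X_1}\|w\|_{X_0}$, using the embedding $X_0\hookrightarrow X$, and averaging over $s\in[0,T]$ yields
\begin{equation*}
\sup_{t\in[0,T]}\|u_\varepsilon(t)-u_\mu(t)\|_X^2\lesssim\|u_\varepsilon-u_\mu\|_{L^2([0,T];X_0)}^2+\|u_\varepsilon'-u_\mu'\|_{L^2([0,T];X_1)}\,\|u_\varepsilon-u_\mu\|_{L^2([0,T];X_0)},
\end{equation*}
which tends to $0$ as $\varepsilon,\mu\to0$. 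Hence $(u_\varepsilon)$ is Cauchy in $C([0,T];X)$ and converges there to some $\widetilde u$; since $u_\varepsilon\to u$ in $L^2([0,T];X)$ as well, $\widetilde u=u$ a.e., so $u$ admits a unique representative in $C([0,T];X)$. Writing the displayed identity for $v=u_\varepsilon$ and letting $\varepsilon\to0$ --- the two boundary terms converge for every fixed $s,t$, while $\langle u_\varepsilon',u_\varepsilon\rangle\to\langle u_t,u\rangle$ in $L^1([0,T])$ --- gives $\|u(t)\|_X^2-\|u(s)\|_X^2=2\int_s^t\langle u_t(\tau),u(\tau)\rangle\,d\tau$ for all $s,t$, i.e. $t\mapsto\|u(t)\|_X^2$ is absolutely continuous with a.e. derivative $2\langle u_t,u\rangle$. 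This is precisely the asserted equality in the sense of distributions on $(0,T)$.

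I expect the time-extension to be the only step that needs genuine care: the extension must simultaneously control $\|\cdot\|_{L^2(X_0)}$ and $\|(\cdot)'\|_{L^2(X_1)}$, whereas a crude truncation would destroy the $H^1$-in-time bound at $t=0$ and $t=T$; the reflection-and-cutoff device fixes this, but one should check that the constants are independent of the particular Hilbert spaces in play. A minor, purely bookkeeping, point is the identification $\langle f,g\rangle=(f,g)_X$ on $X\times X$, which relies on the density of $X_0$ in $X$ that is implicit in the hypothesis that $X_1$ is the dual of $X_0$.
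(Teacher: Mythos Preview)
The paper does not give its own proof of this lemma; it simply cites it from Temam's book (Lemma 1.2 on page 176 of \cite{Lions-M}). Your proof proposal is correct and is essentially the classical argument found in that reference: extend in time, mollify, establish the energy identity on smooth approximants, and pass to the limit via a Cauchy-sequence estimate in $C([0,T];X)$.
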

	
%%%%%%%%%%%%%%%%%%%%%%%%%%%%%%%%%%%%%%%%%%%%%%
%%%%%%%%%%%%%%%%%%%%%%%%%%%%%%%%%%%%%%%%%%
\renewcommand{\theequation}{\thesection.\arabic{equation}}
\setcounter{equation}{0}
%%%%%%%%%%%%%%%%%%%%%%%%%%%%%%%%%%%%%%%%%%%%%%
%%%%%%%%%%%%%%%%%%%%%%%%%%%%%%%%%%%%%%%%%%
	\section{A linearied system}\label{sect-3}

	We are concerned with the time-dependent linear problem
	\begin{equation}\label{EB-multi-3.1}
		\begin{cases}
			\partial_t\xi^1-u^1=0&\quad \text{ on } [0,T]\times\Sigma_0,\\
			\partial_t u -\nu \nabla \cdot \mathbb{D} (u) + \nabla p = F_1 & \quad \text{ in } [0,T]\times\Omega,\\
			\nabla \cdot u=F_2 &\quad \text{ in } [0,T]\times\Omega,\\
			(p -g\,\xi^1)\,n_0 - \nu \mathbb{D} (u)n_0 =F_3 &\quad \text{ on } [0,T]\times\Sigma_0,\\
			u=0 & \quad \text{ on } [0,T]\times\Sigma_b,\\
			u|_{t=0} = u_0 & \quad \text{ in } \Omega,\\
			\xi^1|_{t=0}=\xi_0^1 & \quad \text{ on } \Sigma_0
		\end{cases}
	\end{equation}
with $F_2:= B^{1}:\nabla  K^{(1)} $, and $F_3:= B^{2}:\nabla_h K^{(2)} $.

For this linear system, we can get the following theorem.
\begin{thm}\label{thm-linear-problim-11}
For given $s>2$, $T>0$, assume that $F_1, \, B^{i}, \, K^{(i)}$ (with $i=1, 2$) satisfy that $\Lambda_h^{s-1}(F_1,\,K^{(i)}_t) \in L^2([0,T]; L^2(\Omega))$, $\Lambda_h^{s-1}B^{i} \in \mathcal{C}([0,T]; H^1(\Omega))$, $\Lambda_h^{s-1}K^{(i)} \in \mathcal{C}([0,T];  {_0}H^1(\Omega))$, and $\Lambda_h^{s-1}(B^{i}_t,\,\nabla\,K^{(i)}) \in L^2([0,T]; H^1(\Omega) )$, and $(\xi^1_0,\,u_0)$ satisfies that $\xi^1_0\in H^s(\Sigma_0)$, $\Lambda_h^{s-1}u_0\in {_0}H^1(\Omega)$, and $\nabla\cdot u_0={F_2}|_{t=0}$, then there exists a unique solution $(\xi^1, u, p)$ of \eqref{EB-multi-3.1} satisfying
\[
\begin{split}
			&\xi^1 \in \mathcal{C}([0,T]; H^{s}(\Sigma_0)), \quad \Lambda_h^{s-1}u  \in \mathcal{C}([0,T]; {H}^1(\Omega)) \cap L^2([0,T]; H^2(\Omega)),\\
			&\Lambda_h^{s-1}u_t  \in  L^2([0,T]; L^2(\Omega)), \quad \Lambda_h^{s-1}p  \in L^2([0,T]; H^1(\Omega)).
\end{split}
\]
		Moreover,  the solution satisfies the estimate
	\begin{equation*}\label{estimate-all}
		\begin{split}
			& \sup_{t \in [0, T]}\{\|\xi^1 \|^2_{H^s{(\Sigma_0)}}+\|\Lambda_h^{s-1} u \|^2_{H^1(\Omega)} \}  + \left(\|\Lambda_h^{s-1} (\nabla u ,p)\|^2_{L^2_T(H^1(\Omega))}+ \|\Lambda_h^{s-1} u_t\|^2_{L^2_T( L^2(\Omega)) }\right)\\
			&\leq C_0\bigg(  \|\xi_0^1\|^2_{H^{s}{(\Sigma_0)}}  + \|\Lambda_h^{s-1} u_0\|^2_{H^1(\Omega)}+\|\Lambda_h^{s-1} F_1\|^2_{L^2_T( L^2(\Omega))}\\
			&  +  \|\Lambda_h^{s-1}(B^{1}, B^{2})\|^2_{L^\infty_T( H^1(\Omega))}(\|\Lambda_h^{s-1}(\nabla K^{(1)}, \nabla K^{(2)} )\|^2_{L^2_T( H^1(\Omega))}
  +\|\Lambda_h^{s-1}(K^{(1)}_t, K^{(2)}_t)\|^2_{L^2_T( L^2(\Omega))})\\
			&+\|\Lambda_h^{s-1} (K^{(1)}, K^{(2)})\|^2_{L^\infty_T( H^1(\Omega))}(\|\Lambda_h^{s-1}(B^{1}, B^{2})\|^2_{L^\infty_T( H^1(\Omega))}+\|\Lambda_h^{s-1} (B^{1}_t, B^{2}_t)\|^2_{L^2_T(  L^2(\Omega))})\bigg) .
		\end{split}
\end{equation*}
	\end{thm}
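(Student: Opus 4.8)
The plan is to reduce the nonhomogeneous boundary-value problem \eqref{EB-multi-3.1} to one with homogeneous divergence and homogeneous boundary conditions, for which a Galerkin/energy method applies, and then transfer the estimates back. First I would construct a corrector vector field $w$ on $\Omega$ absorbing the inhomogeneities: I want $\nabla\cdot w = F_2 = B^1:\nabla K^{(1)}$ with $w|_{\Sigma_b}=0$, and simultaneously $\mathbb{D}(w)n_0$ matching (a convenient part of) $F_3 = B^2:\nabla_h K^{(2)}$ on $\Sigma_0$. Because $F_2$ is itself in divergence form, one can write $F_2 = \nabla\cdot(\text{something})$ and then apply the Bogovskii-type / Stokes solvability results available here (Lemma \ref{Beale-lemma-2.8}, Lemma \ref{Stokes-lemma-1}, and the harmonic extension Lemma \ref{harmonic-extension}) to build $w$ with the bounds
\[
\|\Lambda_h^{s-1}\nabla w\|_{H^1(\Omega)} \lesssim \|\Lambda_h^{s-1}B^1\|_{H^1(\Omega)}\|\Lambda_h^{s-1}\nabla K^{(1)}\|_{H^1(\Omega)} + (\text{boundary terms from }F_3),
\]
and likewise for $\partial_t w$ controlled by $\Lambda_h^{s-1}(B^i_t,K^{(i)}_t,\nabla K^{(i)})$. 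The point is that the depth being finite forces an extra correction compared with the half-space case of \cite{Danchin2020}: the naive corrector will not be divergence free near $\Sigma_b$, so one subtracts a further divergence-free field vanishing on $\Sigma_b$ — this is the construction flagged in the fourth remark.

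Setting $\tilde u = u - w$, the pair $(\xi^1,\tilde u,p)$ then solves the \emph{same} system but with $F_2$ replaced by $0$, $F_3$ replaced by a remainder $\widetilde F_3$ (the part of $F_3$ not absorbed by $w$), $F_1$ replaced by $\widetilde F_1 = F_1 - \partial_t w + \nu\nabla\cdot\mathbb{D}(w) - \nabla(\cdots)$, and $u_0$ replaced by $\tilde u_0 = u_0 - w|_{t=0}$ which is now divergence free in the strong sense by the compatibility hypothesis $\nabla\cdot u_0 = F_2|_{t=0}$. This homogenized system is exactly the object of Proposition \ref{linear-system}; I would invoke it to get a unique solution in the stated spaces together with the energy estimate. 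The next step is to run this at the level of the tangential fractional derivative: apply $\Lambda_h^{s-1}$ to the system (it commutes with $\partial_t$, $\nabla$, $\mathbb{D}$ and with the boundary operators since $\Lambda_h$ acts only in $x_h$), treat the commutators $[\Lambda_h^{s-1},\cdot]$ with $B^i$, $K^{(i)}$ via the product laws Lemma \ref{lem-product-law-1} and the composition lemmas, and close via Korn–Poincaré (Lemma \ref{Korn's_inequality}), Poincaré (Lemma \ref{lem-korn-2}), and the Stokes regularity Lemma \ref{lem-3-5}. The $\xi^1$-equation is a transport equation $\partial_t\xi^1 = u^1|_{\Sigma_0}$ driven by the trace of $u$; integrating in time and using the trace estimate $\|u^1\|_{H^{s-1/2}(\Sigma_0)}\lesssim \|\Lambda_h^{s-1}u\|_{H^1(\Omega)}$ gives $\xi^1\in\mathcal C([0,T];H^s(\Sigma_0))$ with the gravity term $g\xi^1$ in the boundary condition handled by a Grönwall argument over the short time $T\le 1$. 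Continuity in time of $\Lambda_h^{s-1}u$ with values in $H^1$ follows from the Lions–Magenes Lemma \ref{Lions-M-lem} applied with $X_0=H^2\cap{_0}H^1$, $X={_0}H^1$, and $X_1$ its dual, using $\Lambda_h^{s-1}u_t\in L^2_T(L^2)$.

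Uniqueness is obtained by taking the difference of two solutions, which satisfies the homogeneous system ($F_1=F_2=F_3=0$, zero data), and applying the basic energy identity from Lemma \ref{Lions-M-lem}: testing the momentum equation against $\tilde u$, integrating by parts to produce $\tfrac\nu2\|\mathbb D(\tilde u)\|_{L^2}^2$ plus the boundary contribution $\tfrac g2\frac{d}{dt}\|\xi^1\|_{L^2(\Sigma_0)}^2$, then Korn and Grönwall give $\tilde u\equiv 0$, $\xi^1\equiv0$, and finally $p$ is recovered uniquely up to the (fixed) boundary condition. I expect the main obstacle to be the \textbf{construction of the corrector $w$ in the finite-depth geometry}: one must simultaneously meet the divergence constraint \emph{and} the Neumann-type boundary condition on $\Sigma_0$ while keeping $w|_{\Sigma_b}=0$ and propagating all three scales of norms ($L^\infty_T H^1$, $L^2_T H^2$, and $\partial_t$ in $L^2_T L^2$) with the bilinear structure $B^i\cdot\nabla K^{(i)}$ intact — this is where the classical parabolic theory of \cite{LionsMagenes1972} is insufficient and the new iteration scheme is needed. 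The commutator estimates for $\Lambda_h^{s-1}$ against the rough coefficients and the handling of the gravity term coupling $\xi^1$ to $u$ on the boundary are the other delicate points, but those are controlled by the preliminary estimates of Section \ref{sect-2} together with the smallness built into the choice of $T$.
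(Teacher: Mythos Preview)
Your high-level strategy---reduce to a homogeneous problem via correctors, invoke Proposition~\ref{linear-system}, then read off the estimates---matches the paper's architecture. But your execution diverges from the paper in two substantive ways, and one of them is a genuine gap.

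First, the paper separates the two reductions: Step~1 removes the divergence by setting $U=\nabla\Psi$ with $\Delta\Psi=F_2$ (plus the mixed boundary conditions $\Psi|_{\Sigma_0}=0$, $\partial_1\Psi|_{\Sigma_b}=0$), which is cheap via Lemma~\ref{Beale-lemma-2.8}; only \emph{then} does Step~2 build a divergence-free corrector $V$ to kill the stress boundary condition. You propose a single corrector $w$ doing both jobs at once. That is not wrong in principle, but it makes the construction harder, and your proposed tools (Bogovskii, the Stokes lemmas, harmonic extension) are not what the paper uses and would not obviously deliver the time-regularity you need. The paper's $V$ is built as $V_1=\nabla\times\varphi$ with $\varphi$ solving a \emph{heat equation} $\partial_t\varphi-\Delta\varphi=0$ in $\Omega$ whose boundary trace $\Phi$ on $\Sigma_0$ satisfies an explicit 2D parabolic system driven by $\widetilde F_3$; the estimates for $\|\Lambda_h^{s-1}(\partial_tV,\nabla^2 V)\|_{L^2_T L^2}$ come from explicit Fourier analysis in $(t,x_h)$ using the symbol $r^2=i\tau+|\varsigma_h|^2$. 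The finite-depth correction you correctly anticipate is the field $V_2$ in Step~2.2, constructed to fix $V_1^1|_{\Sigma_b}\neq 0$. None of this is a Bogovskii argument. Moreover, the $L^\infty_T H^1$ bound and the continuity in time of $\Lambda_h^{s-1}\nabla V$ (Step~2.3) require the ``good unknowns'' device $\mathcal G^\beta=\partial_1 V^\beta+\partial_\beta V^1-\tfrac1\nu\widetilde F_3^\beta$ from \cite{Gui2020}, which you do not mention; a straight Lions--Magenes argument does not close here because $\partial_1 V^\beta$ is not directly paired with $\partial_t\partial_1 V^\beta$ through the equation.

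Second, a concrete gap: you write that after subtracting $w$ the boundary term becomes ``a remainder $\widetilde F_3$''. But Proposition~\ref{linear-system} requires the stress boundary condition to be \emph{fully} homogeneous, $(Q-g\xi^1)n_0-\nu\mathbb D(W)n_0=0$ on $\Sigma_0$; there is no slot for a residual $\widetilde F_3$. The corrector must absorb all of $F_3$ (plus the $\nu\mathbb D(U)n_0$ contribution generated in Step~1), which is exactly why the heat-equation construction of $V$ is needed. Relatedly, there is no separate ``apply $\Lambda_h^{s-1}$ and estimate commutators'' step after Proposition~\ref{linear-system}: the proposition is already stated and proved at the $\Lambda_h^{s-1}$ level, and the product structure $B^i{:}\nabla K^{(i)}$ is exploited \emph{inside} the corrector estimates (e.g.\ to bound $\|\Lambda_h^{s-1}U_t\|_{L^2_T L^2}$ by integrating by parts before differentiating in time), not via commutators afterward.
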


	\begin{proof} The proof of Theorem \ref{thm-linear-problim-11} is divided into four steps.

	{\bf Step 1: Removing the divergence of the velocity}\par

	In order to remove the divergence of the velocity in the third equation of \eqref{EB-multi-3.1}, we look for $U=(U^1,U^2,U^3)^T$ under the form $U = \nabla  \Psi$ solving
	\begin{equation*}\label{U_cd}
		\nabla \cdot U=F_2,
	\end{equation*}
so $\Psi$ satisfies
	\begin{equation}\label{Psi}
		\Delta \Psi=F_2 \quad \text{in} \quad \Omega.
	\end{equation}
	We impose $\Psi$ by the boundary condition
	\begin{equation}\label{Psi_bc}
		\Psi|_{\Sigma_0}=0,\quad \mbox{and} \quad \partial_1\Psi|_{\Sigma_b}=0
	\end{equation}
	to make sure that $U^1=0$ on $\Sigma_b$.
	
Applying Lemma \ref{Beale-lemma-2.8} to the system \eqref{Psi}-\eqref{Psi_bc} yields
	\begin{equation*}
\|\Lambda_h^{s-1}\nabla\Psi\|_{H^1(\Omega)}\lesssim\|\Lambda_h^{s-1}F_2\|_{L^2(\Omega)},\quad
			\|\Lambda_h^{s-1}\nabla^2\Psi\|_{H^1(\Omega)}\lesssim\|\Lambda_h^{s-1}F_2\|_{H^1(\Omega)},
	\end{equation*}
and then
	\begin{equation}\label{U-est-1}
\|\Lambda_h^{s-1}U\|_{H^1(\Omega)}\lesssim\|\Lambda_h^{s-1}F_2\|_{L^2(\Omega)},\quad
			\|\Lambda_h^{s-1}U\|_{H^2(\Omega)}\lesssim\|\Lambda_h^{s-1}F_2\|_{H^1(\Omega)}.
	\end{equation}
Thanks to Lemma \ref{lem-product-law-1}, we have
	\begin{equation*}\label{NABLAPSI}
		\begin{split}
		\|\Lambda_h^{s-1}F_2\|_{L^2(\Omega)}&\lesssim \|\Lambda_h^{s-1}B^{1}\|_{L^{\infty}_{x_1}L^2_h}\|\Lambda_h^{s-1}\nabla K^{(1)}\|_{L^2}\lesssim \|\Lambda_h^{s-1}B^{1}\|_{H^1}\|\Lambda_h^{s-1}\nabla K^{(1)}\|_{L^2},\\
			\|\Lambda_h^{s-1}F_2\|_{H^1(\Omega)}&\lesssim \|\Lambda_h^{s-1}F_2\|_{L^2(\Omega)}+\|\Lambda_h^{s-1}(\nabla\,B^{1}:\nabla \,K^{(1)},\, B^{1}:\nabla^2 \,K^{(1)})\|_{L^2}\\
&\lesssim \|\Lambda_h^{s-1}B^{1}\|_{H^1}\|\Lambda_h^{s-1}\nabla K^{(1)}\|_{H^1},
		\end{split}
	\end{equation*}
which along with \eqref{U-est-1} implies
	\begin{equation}\label{U-est-2}
		\begin{split}
		&\|\Lambda_h^{s-1}U\|_{L^{\infty}_T(H^1(\Omega))}+\|\Lambda_h^{s-1}U\|_{L^{2}_T(H^2(\Omega))}\\
&\lesssim \|\Lambda_h^{s-1}B^{1}\|_{L^{\infty}_T(H^1(\Omega))}(\|\Lambda_h^{s-1}\nabla K^{(1)}\|_{L^{\infty}_T(L^2(\Omega))}+\|\Lambda_h^{s-1}\nabla K^{(1)}\|_{L^{2}_T(H^1(\Omega))}).
		\end{split}
	\end{equation}
On the other hand, since
	\begin{equation*}
		\begin{split}			
&\|\Lambda_h^{s-1}\nabla\Psi_t\|^2_{L^2(\Omega)}=-\langle\Lambda_h^{s-1}\Delta\Psi_t,\Lambda_h^{s-1}\Psi_t\rangle\\
			&=(\Lambda_h^{s-1}(B^{1} K^{(1)}_t),\Lambda_h^{s-1}\nabla\Psi_t)_{L^2(\Omega)}
-(\Lambda_h^{s-1}(B^{1}_t:\nabla K^{(1)}-(\nabla\cdot B^{1})\cdot K^{(1)}_t),\Lambda_h^{s-1}\Psi_t)_{L^2(\Omega)},
		\end{split}
	\end{equation*}
one has
	\begin{equation*}
		\begin{split}
			&\|\Lambda_h^{s-1}\nabla\Psi_t\|_{L^2(\Omega)}\leq
			\|\Lambda_h^{s-1} B^{1}_t\|_{L^2(\Omega)}\|\Lambda_h^{s-1} K^{(1)}\|_{H^1(\Omega)}+
			\|\Lambda_h^{s-1}  B^{1}\|_{H^1(\Omega)}\|\Lambda_h^{s-1} K^{(1)}_t\|_{L^2(\Omega)},
		\end{split}
	\end{equation*}
which follows that
	\begin{equation}\label{U-est-3}
		\begin{split}
			\|\Lambda_h^{s-1}U_t\|_{L^{2}([0, T]; L^2(\Omega))}\lesssim &
			\|\Lambda_h^{s-1} B^1_t\|_{L^{2}([0, T]; L^2(\Omega))}\|\Lambda_h^{s-1} K^{(1)}\|_{L^{\infty}([0, T]; H^1(\Omega))}\\
&+\|\Lambda_h^{s-1}   B^1\|_{L^{\infty}([0, T]; H^1(\Omega))}\|\Lambda_h^{s-1} K^{(1)}_t\|_{L^{2}([0, T]; L^2(\Omega))}.
		\end{split}
	\end{equation}
Hence, combining \eqref{U-est-2} and \eqref{U-est-3}, we have
	\begin{equation}\label{U-est-4}
		\begin{split}
&\|\Lambda_h^{s-1}U\|^2_{L^{\infty}_T(H^1(\Omega))}+\|\Lambda_h^{s-1}U\|^2_{L^{2}_T(H^2(\Omega))}+\|\Lambda_h^{s-1}U_t\|^2_{L^{2}_T( L^2(\Omega))}\\
&\lesssim (\|\Lambda_h^{s-1}\nabla K^{(1)}\|^2_{L^{\infty}_T(L^2(\Omega))}+\|\Lambda_h^{s-1}\nabla K^{(1)}\|^2_{L^{2}_T(H^1(\Omega))}+\|\Lambda_h^{s-1} K^{(1)}_t\|^2_{L^{2}_T(L^2(\Omega))})\\
&\qquad \quad  \times \|\Lambda_h^{s-1}B^{1}\|^2_{L^{\infty}_T(H^1(\Omega))}+\|\Lambda_h^{s-1} B^{1}_t\|^2_{L^{2}_T(L^2(\Omega))}\|\Lambda_h^{s-1} K^{(1)}\|^2_{L^{\infty}_T(H^1(\Omega))}.
		\end{split}
	\end{equation}
Similarly, for any $t_1,\,t_2\in[0,T]$, we may prove
	\begin{equation*}
		\begin{split}
			\|\Lambda_h^{s-1}(U(t_1)-U(t_2))\|_{H^1}&\lesssim\|\Lambda_h^{s-1}(B^{1}(t_1):\nabla K^{(1)}(t_1)-B^{1}(t_2):\nabla K^{(1)}(t_2))\|_{L^2}\\
			&\lesssim \|\Lambda_h^{s-1}(B^{1}(t_1)-B^{1}(t_2))\|_{H^1}\|\Lambda_h^{s-1}\nabla K^{(1)}(t_1)\|_{L^2}\\
&\qquad +\|\Lambda_h^{s-1}B^{1}(t_2)\|_{H^1}\|\Lambda_h^{s-1}\nabla( K^{(1)}(t_1)- K^{(1)}(t_2))\|_{L^2},
		\end{split}
	\end{equation*}
	which leads to
%\begin{equation}\label{U-est-5}
$\Lambda_h^{s-1} U\in \mathcal{C}([0,T]; H^1(\Omega)).$
%	\end{equation}

With $U$ above in hand, we set
	\begin{equation*}
		v := u-U, \quad \widetilde{F}_1 := F_1-U_t+\nu\nabla \cdot \mathbb{D}(U),\quad \text{and}\quad \widetilde{F}_3 := F_3+\nu\mathbb{D}(U)n_0,% \gamma (),
	\end{equation*}
then the system \eqref{EB-multi-3.1} can be reduced to finding $(\xi^1, v,  P)$ such that
	\begin{equation}
		\begin{cases}
			\partial_t \xi^1-v^1=U^1 &\quad \text{ in } [0,T]\times\Sigma_0,\\
			\partial_t v -\nu \nabla \cdot \mathbb{D} (v) + \nabla P = \widetilde{F}_1 &\quad \text{ in } [0,T]\times\Omega,\\
			\nabla \cdot v=0 &\quad \text{ in } [0,T]\times\Omega,\\
			(p -g\,\xi^1)\,n_0 -\nu \mathbb{D} (v)n_0 =\widetilde{F}_3 &\quad \text{ on } [0,T]\times\Sigma_0,\\
			v^1 =0,\, v^2=-U^2,\, v^3=-U^3  & \quad \text{ on } [0,T]\times\Sigma_b,\\
			v|_{t=0} = u_0-U_0  & \quad \text{ in } \Omega,\\
			\xi^1|_{t=0} = \xi^1_0  & \quad \text{ on } \Sigma_0.
		\end{cases}
	\end{equation}
	
	%%%%%%%%%%%%%%%%%%%%%%%%%%%%%%%%%%%
	{\bf Step 2: Homogenizing the upper boundary conditions}\par

	We denote $\Xi\,A(t)\,(\forall\,t \in \mathbb{R})$ the extension of a function $A(t)\, (\forall\,t \in [0,T])$ by
	\begin{equation*}
		\Xi A(t)\eqdefa
		\begin{cases}
			(3A(-t)-2A(-2t))\chi_0(t)&\text{ if }t<0,\\
			A(t)&\text{ if }0\leq t\leq T, \\
			(3A(2T-t)-2A(3T-2t))\chi_T(t)&\text{ if }t>T, \\
		\end{cases}
	\end{equation*}
	where $\chi_0(t)\in C_c^{\infty}((-\infty,0])$ and $\chi_T(t)\in C_c^{\infty}([T,\infty))$ are defined by
	\begin{equation*}
		\chi_0(t)\eqdefa
		\begin{cases}
			1&\text{ if }\,t\in[-\frac{1}{4}T, 0],\\
             \in [0, 1]& 	\text{ if }	\, t\in[-\frac{1}{2}T, -\frac{1}{4}T],	\\
             0&\text{ if }t\in (-\infty, -\frac{1}{2}T],
		\end{cases}
		\text{ and }\,
		\chi_T(t)\eqdefa
		\begin{cases}
			1&\text{ if }t\in[T, \frac{5}{4}T],\\
           \in [0, 1]& \text{ if }	\, t\in[\frac{5}{4}T, \frac{3}{2}T],\\
			0&\text{ if }t\in [\frac{3}{2}T, +\infty)
		\end{cases}
	\end{equation*}
	respectively. We apply the extension operator $\Xi$ to $B^{1},K^{(1)},B^2,K^{(2)},\Psi,U$, and keep the same notation for the extension of $\widetilde{F}_3(t)=B^2:\nabla_h K^{(2)}+\nu\mathbb{D}(U)n_0$ ($\forall \, t \in \mathbb{R}$).
Denote two smooth cut-off function by
	\begin{equation}\label{cutfunction-B}
		\chi_{b}(x_1)\eqdefa
		\begin{cases}
			1, &\text{ if } x_1 \in [-\underline{b}, -\frac{2}{3}\underline{b});\\
			 \in [0, 1],  &\text{ if }x \in (-\frac{2}{3}\underline{b}, -\frac{1}{3}\underline{b}];\\
			0, &\text{ if } x_1 \in (-\frac{1}{3}\underline{b}, 0],
		\end{cases}
       \quad
       \chi_{f}(x)\eqdefa
		\begin{cases}
			 1, &\text{ if }x \in (-\frac{2}{3}\underline{b}, 0];\\
			   \in [0, 1], &\text{ if }x \in (-\frac{5}{6}\underline{b}, -\frac{2}{3}\underline{b}];\\
			 0, &\text{ if } x \in [-\underline{b}, -\frac{5}{6}\underline{b}),
		\end{cases}
	\end{equation}
	with $|\frac{d^n}{d x_1^n}(\chi_{b}(x_1),\chi_{f}(x_1))| \lesssim \underline{b}^{-n}$ for any $n \in \mathbb{N}$.
	Notice that $\chi_{f}(\text{Supp}{\chi'_{b}})\equiv 1$.
	%where $\chi_{b}(x)$ and $\chi_{f}(x)$ are given by \eqref{cutfunction-B} and \eqref{cutfunction-F}, respectively.
	We also set $\Omega_{b}:=\Omega\,\chi_{b}$ with the measure $\chi_{b}\,dx$ and $\Omega_{f}:=\Omega\,\chi_{f}$ with the measure $\chi_{f}\,dx$, and define $\varphi_{1,b} := \varphi_1\chi_{b}$ and $\varphi_{1,f}:=\varphi_1\chi_f$.\par

Our next goal is to construct $(V,P_1)$ satisfying
	\begin{equation*}\label{EB-multi-3.4aa}
		\begin{cases}
			\nabla \cdot V=0 &\quad \text{ in } \mathbb{R}\times\Omega,\\
			P_1n_0 - \nu \mathbb{D} (V)n_0 =\widetilde{F}_3 &\quad \text{ on } \mathbb{R}\times\Sigma_0,\\
			V^1 =0,V^2=-U^2,V^3=-U^3 & \quad \text{ on } \mathbb{R}\times\Sigma_b,
		\end{cases}
	\end{equation*}
which is equivalent to
	\begin{equation}\label{EB-multi-3.4}
		\begin{cases}
			\nabla \cdot V=0 &\quad \text{ in } \mathbb{R}\times\Omega,\\
			-\nu (\partial_1V^{\beta}+\partial_{\beta}V^1) =\widetilde{F}_3^{\beta} \,(\mbox{for}\,\beta=2,3) &\quad \text{ on } \mathbb{R}\times\Sigma_0,\\
P_1 =\widetilde{F}_3^1+ 2\nu \partial_1 V^1 &\quad \text{ on } \mathbb{R}\times\Sigma_0,\\
			V^1 =0,V^2=-U^2,V^3=-U^3 & \quad \text{ on } \mathbb{R}\times\Sigma_b.
			%V|_{t=0} = 0 & \quad \text{ in } \Omega.
		\end{cases}
	\end{equation}
And our construction and estimates of $V(=V_1+V_2)$ is divided into three parts, the first two  parts show the construction $V_1$ and $V_2$ and the estimates of $\|\Lambda_h^{s-1}V_t\|_{L^2_T(L^2(\Omega))}$ and $\|\Lambda_h^{s-1}V\|_{L^2_T(H^2(\Omega))}$, and the third part shows the estimates of $\|\Lambda_h^{s-1}V\|_{L^{\infty}_T(H^1(\Omega))}$.\\
{\bf Step\,2.1:The construction of $V_1$ and the estimates of $\|\Lambda_h^{s-1}(\partial_tV_1,\nabla V_1,\nabla^2 V_1)\|_{L^2_T(L^2(\Omega))}$}

Motivated by \cite{Danchin2020}, we first look for $V_1=(V_1^1,V_1^2,V_1^3)^T$ under the form $V_1 = \nabla \times \varphi$ with $\varphi = (0,\varphi^2,\varphi^3)^T$ satisfying
	\begin{equation}\label{EB-multi-3.7}
\begin{cases}
		\partial_t \varphi-\Delta \varphi=0 &\quad \text{in }\,\mathbb{R}\times \Omega,\\
			 \partial_1\partial_1\varphi^3-\partial_2\partial_2\varphi^2+\partial_2\partial_3\varphi^2= \frac{1}{\nu}\widetilde{F}_3^2& \quad\text{on } \, \mathbb{R}\times\Sigma_0,\\
		\partial_1\partial_1\varphi^2-\partial_3\partial_3\varphi^2+\partial_3\partial_2\varphi^3=-\frac{1}{\nu}\widetilde{F}_3^3& \quad\text{on }  \, \mathbb{R}\times\Sigma_0.
		\end{cases}
	\end{equation}
	Denoting the trace of $\varphi$ on $\mathbb{R}\times  \Sigma_0$ by $\Phi=(0,\Phi^2, \Phi^3)^T$ and combining with \eqref{EB-multi-3.7}, we have
	\begin{equation}\label{EB-multi-3.9}
		\begin{cases}
			\partial_t\Phi^2-\Delta_h\Phi^2-\partial_3\partial_3\Phi^2+\partial_3\partial_2\Phi^3=\frac{1}{\nu} \widetilde{F}_3^3  &  \text{on }\,\mathbb{R}\times\Sigma_0,\\
			\partial_t\Phi^3-\Delta_h\Phi^3-\partial_2\partial_2\Phi^3+\partial_2\partial_3\Phi^2= -\frac{1}{\nu}\widetilde{F}_3^2  &  \text{on }\, \mathbb{R}\times\Sigma_0.
		\end{cases}
	\end{equation}
	Set $\Phi_h:=(\Phi^2, \Phi^3)^T$. According to the Helmholtz-Hodge decomposition in $H^{s}(\Sigma_0)$, we have
	\begin{equation*}\label{Helmholtz-Hodge}
		\Phi_h=\mathbb{P}_0\Phi_h+\nabla_h \pi \quad \text{on }\,\mathbb{R}\times\Sigma_0,
	\end{equation*}
	and then
\begin{equation}\label{Helmholtz-Hodge-2aa}
		\nabla_h\cdot\Phi_h=\Delta_h \pi \quad \text{on }\,\mathbb{R}\times\Sigma_0,
	\end{equation}
where $\mathbb{P}_0$ is Leray's projection operator in $L^2(\Sigma_0)$.

Applying $-\nabla_h(-\Delta_h)^{-1}\nabla_h\cdot$ to both sides of the linear system \eqref{EB-multi-3.9}, we find
	\begin{equation}\label{nablaPhi}
		\partial_t(\nabla_h\pi)-\Delta_h(\nabla_h\pi)=-\frac{1}{\nu}\nabla_h(-\Delta_h)^{-1} \nabla_h\cdot\widetilde{F}_3^{\perp} \,\text{on }\,\mathbb{R}\times\Sigma_0,
	\end{equation}
where  $\widetilde{F}_3^{\perp}:=(\widetilde{F}_3^3,-\widetilde{F}_3^2)^T$.

Thanks to \eqref{Helmholtz-Hodge-2aa}, one finds
	\begin{equation*}\label{varpi_3}
		\begin{cases}
			\partial_2(\partial_2\Phi^2+\partial_3\Phi^3)=\partial_2\Delta_h\pi\,\,\text{on}\,\,\mathbb{R}\times\Sigma_0,\\
			\partial_3(\partial_2\Phi^2+\partial_3\Phi^3)=\partial_3\Delta_h\pi\,\,\text{on}\,\,\mathbb{R}\times\Sigma_0,\\
		\end{cases}
	\end{equation*}
	which along with \eqref{EB-multi-3.9} implies
	\begin{equation}\label{varpi_3}
			\partial_t\Phi_h-2\Delta_h\Phi_h = \frac{1}{\nu} \widetilde{F}_3^{\perp} -\Delta_h\nabla_h\pi \quad \text{on }\quad\mathbb{R}\times\Sigma_0.
	\end{equation}
	The classical energy estimate of \eqref{varpi_3} shows that
	\begin{equation}\label{EB-multi-3.10-1}
		\|(\nabla_h\Phi,\nabla_h^2\Phi)\|_{L^2(\mathbb{R}; H^{s-\frac{1}{2}}(\Sigma_0))}^2+\|\Phi_t\|_{L^2(\mathbb{R}; H^{s-\frac{1}{2}}(\Sigma_0))}^2 \lesssim \|(\widetilde{F}_3^{\perp}, \Delta_h\nabla_h\pi)\|_{L^2(\mathbb{R}; H^{s-\frac{1}{2}}(\Sigma_0))}^2.
	\end{equation}
While the energy estimate of \eqref{nablaPhi} gives
	\begin{equation*}\label{EB-multi-3.10-2}
		\|\Delta_h\nabla_h\pi\|_{L^2(\mathbb{R}; H^{s-\frac{1}{2}}(\Sigma_0))}^2\lesssim \|\widetilde{F}_3^{\perp}\|_{L^2(\mathbb{R}; H^{s-\frac{1}{2}}(\Sigma_0))}^2,
	\end{equation*}
which along with \eqref{EB-multi-3.10-1} gives rise to
	\begin{equation}\label{EB-multi-3.10-3}
		\|(\nabla_h\Phi,\nabla_h^2\Phi)\|_{L^2(\mathbb{R}; H^{s -\frac{1}{2} }(\Sigma_0))}^2+\|\Phi_t\|_{L^2(\mathbb{R}; H^{s-\frac{1}{2}}(\Sigma_0))}^2 \lesssim \|\widetilde{F}_3\|_{L^2(\mathbb{R}; H^{s-\frac{1}{2}}(\Sigma_0))}^2.
	\end{equation}
Substituting $V_1 = \nabla \times \varphi$ with $\varphi = (0,\varphi^2,\varphi^3)^T$ into the forth equation in \eqref{EB-multi-3.4} gives rise to
	\begin{equation*}
		\partial_1\varphi^2 =-U^3, \quad \partial_1\varphi^3 =U^2 \quad \text{on}\quad\mathbb{R}\times \Sigma_b.
	\end{equation*}
	Hence, the function $\varphi$ obeys the system
	\begin{equation}\label{EB-multi-3.10}
		\begin{cases}
			\partial_t \varphi-\Delta \varphi=0 \,&  \text{on}\quad\mathbb{R}\times \Omega,\\
			\varphi = \Phi\,& \text{on}\quad\mathbb{R}\times \Sigma_0,\\
			\partial_1\varphi =-n\times U,\,& \text{on}\quad\mathbb{R}\times \Sigma_b.
		\end{cases}
	\end{equation}

Denote the Fourier transform in terms of $(t, x_h)$ by\, $\widehat{}$\, or $\mathcal{F}=\mathcal{F}_{(t, x_h)\rightarrow (\tau, \varsigma_h)}$, and then apply the Fourier transform $\mathcal{F}$ to the problem \eqref{EB-multi-3.10}, so we get the second-order ODE for $\widehat{\varphi}$:
	\begin{equation}\label{ODE-varphi-1}
	\begin{cases}
		-\frac{d^2}{d{x}_1^2}\widehat{\varphi}+(i\,\tau+|\varsigma_h|^2)\widehat{\varphi}=0\quad \text{ in } (-b,0),\\
		\widehat{\varphi}|_{x_1=0} = \widehat{\Phi},\\
		\frac{d}{d{x}_1}\widehat{\varphi}|_{x_1=-b} =-n\times \widehat{U}|_{x_1=-b}.
	\end{cases}
	\end{equation}
	Solving the ODEs \eqref{ODE-varphi-1}, we get the explicit expression of the solution $\widehat{\varphi}(\tau, x_1, \varsigma_h)$ (with $\tau\in \mathbb{R}$, $\varsigma_h \in \mathbb{R}^2$) that
	\begin{equation}\label{EB-multi-3.11}
			\widehat{\varphi}(\tau,x_1,\varsigma_h)=\widehat{\varphi}_1 +\widehat{\varphi}_2
	\end{equation}
with
\begin{equation}\label{form-of-varphi-1}
      \begin{split}
        &\widehat{\varphi}_1 :=\frac{\mathrm{e}^{-rx_1}+\mathrm{e}^{r(2b+x_1)}}{\mathrm{e}^{2rb}+1}(0,\widehat{\Phi}^2,\widehat{\Phi}^3)^T,\\
       & \widehat{\varphi}_2 :=\frac{\mathrm{e}^{rx_1}-\mathrm{e}^{-rx_1}}{r(\mathrm{e}^{rb}+\mathrm{e}^{-rb})}
        (0,-\mathrm{i}\varsigma_3\widehat{\Psi}|_{x_1=-b},\mathrm{i}\varsigma_2\widehat{\Psi}|_{x_1=-b})^T,
      \end{split}
    \end{equation}
where $r$ is defined by the equation
$r^2= i \tau+|\varsigma_h|^2 $ so that $\arg r\in [-\frac{\pi}{4},\frac{\pi}{4}]$, which means
\begin{equation}\label{r-ex}
r=(\frac{\sqrt{\tau^2+|\varsigma_h|^4}
+|\varsigma_h|^2}{2})^{\frac{1}{2}}+i\,\mathrm{sgn}(\tau)(\frac{\sqrt{\tau^2+|\varsigma_h|^4}-|\varsigma_h|^2}{2})^{\frac{1}{2}}.
\end{equation}

Let's now get estimates of $\varphi$ by using the form \eqref{EB-multi-3.11} of $\varphi$. We first deal with the estimate of $\|\Lambda_h^{s-1}( \nabla^2\varphi, \nabla^2\nabla_h\varphi, \partial_t\varphi, \partial_t\nabla_h\varphi,\partial_t\nabla\varphi_2,\nabla^3\varphi_2)\|_{L^2(\mathbb{R}\times\Omega)}^2$.

Thanks to Plancherel's identity, one has
	\begin{equation}\label{Re-1+2}
		\begin{split}
			&\|\Lambda_h^{s-1}(\partial_t\nabla_h\varphi,\nabla^2\nabla_h\varphi)\|_{L^2(\mathbb{R}\times\Omega)}\\
&\lesssim\big\|(\chi_{\{\mathrm{Re}\,r\leq 1\}}+1-\chi_{\{\mathrm{Re}\,r\leq 1\}})|r|^2|\varsigma_h|\langle\varsigma_h\rangle^{s-1} (\widehat{\varphi}_1+ \widehat{\varphi}_2)\big\|_{L^2(\mathbb{R}\times\Omega)}.	
            \end{split}
	\end{equation}
On the one hand, due to $\| \chi_{\{\mathrm{Re}\,r\leq 1\}} (|\varsigma_h|^{\frac{1}{2}}|\mathrm{e}^{-rx_1}+\mathrm{e}^{r(2b+x_1)} | ) |\mathrm{e}^{2rb}+1|^{-1}\|_{L^{\infty}(\mathbb{R}\times\Omega)}\lesssim 1$, we get
    \begin{equation}\label{Re-1}
		\begin{split}
     \big\| \chi_{\{\mathrm{Re}\,r\leq 1\}} |r|^2|\varsigma_h|\langle\varsigma_h\rangle^{s-1} \widehat{\varphi}_1\big\|_{L^2(\mathbb{R}\times\Omega)}
\lesssim b^{\frac{1}{2}} \||r|^2\langle\varsigma_h\rangle^{s-\frac{1}{2}}\hat{\Phi}\|_{L_{\tau}^2(L^2_h)}.
       \end{split}
	\end{equation}
    Similarly, one obtains
    \begin{equation}\label{Re-1111111111}
		\begin{split}
     \big\| ( 1-\chi_{\{\mathrm{Re}\,r\leq 1\}}) |r|^2|\varsigma_h|\langle\varsigma_h\rangle^{s-1} \widehat{\varphi}_2\big\|_{L^2(\mathbb{R}\times\Omega)}
\lesssim b^{\frac{1}{2}} \||r|^2\langle\varsigma_h\rangle^{s-\frac{1}{2}}\hat{\Psi}|_{x_1=-b}\|_{L_{\tau}^2(L^2_h)},
       \end{split}
	\end{equation}
and
    \begin{equation}\label{Re-1111111112}
		\begin{split}
     \big\| ( 1-\chi_{\{\mathrm{Re}\,r\leq 1\}}) |r|^3\langle\varsigma_h\rangle^{s-1} \widehat{\varphi}_2\big\|_{L^2(\mathbb{R}\times\Omega)}
\lesssim b^{\frac{1}{2}} \||r|^2\langle\varsigma_h\rangle^{s+\frac{1}{2}}\hat{\Psi}|_{x_1=-b}\|_{L_{\tau}^2(L^2_h)}.
       \end{split}
	\end{equation}
	On the other hand, according to \eqref{form-of-varphi-1}, we have
    \begin{equation}\label{Re-22}
		\begin{split}
			&\big\|( 1-\chi_{\{\mathrm{Re}\,r\leq 1\}}) |r|^2|\varsigma_h|\langle\varsigma_h\rangle^{s-1} \widehat{\varphi}_1\big\|_{L^2(\mathbb{R}\times\Omega)}\\	
&\lesssim \|( 1-\chi_{\{\mathrm{Re}\,r\leq 1\}})
|\varsigma_h|^{\frac{1}{2}}\,|\mathrm{e}^{2rb}+1|^{-1}\|\mathrm{e}^{-rx_1}+\mathrm{e}^{r(2b+x_1)}\|_{L_{x_1}^2}\|_{L^{\infty}_{\tau, h}}\||r|^2\langle\varsigma_h\rangle^{s-\frac{1}{2}}\hat{\Phi}\|_{L_{\tau, h}^2}\\
&\lesssim  \||r|^2\langle\varsigma_h\rangle^{s-\frac{1}{2}}\hat{\Phi}\|_{L_{\tau}^2(L^2_h)}.
       \end{split}
	\end{equation}
Along the same lines, we may prove that
    \begin{equation}\label{est-Re-22-1}
		\begin{split}
    & \big\| \chi_{\{\mathrm{Re}\,r\leq 1\}} |r|^2|\varsigma_h|\langle\varsigma_h\rangle^{s-1} \widehat{\varphi}_2\big\|_{L^2(\mathbb{R}\times\Omega)}
\lesssim  \||r|^2\langle\varsigma_h\rangle^{s-\frac{1}{2}} \hat{\Psi}|_{x_1=-b}\|_{L_{\tau}^2(L^2_h)},\\
     &\big\| \chi_{\{\mathrm{Re}\,r\leq 1\}} |r|^3\langle\varsigma_h\rangle^{s-1} \widehat{\varphi}_2\big\|_{L^2(\mathbb{R}\times\Omega)}
\lesssim  \||r|^2\langle\varsigma_h\rangle^{s-\frac{1}{2}} \hat{\Psi}|_{x_1=-b}\|_{L_{\tau}^2(L^2_h)}.
       \end{split}
	\end{equation}
 Plugging \eqref{Re-1},\eqref{Re-1111111111},\eqref{Re-22},\eqref{est-Re-22-1} into \eqref{Re-1+2}  implies
	\begin{equation*}
		\begin{split}
			\|\Lambda_h^{s-1}(\partial_t\nabla_h\varphi,\nabla^2\nabla_h\varphi)\|^2_{L^2(\mathbb{R}\times\Omega)}
			%&\lesssim \| \frac{|\varsigma_h|^{\frac{1}{2}}}{|\mathrm{Re}\,r|}\|_{L^{\infty}_t(L^{\infty}_h)}\||r|^2\langle\varsigma_h\rangle^{N-\frac{1}{2}}\hat{\Phi}\|_{L^2(L^2_h)}\\
			&\lesssim \|\nabla_h^2(\Phi,\Psi)\|_{L^2(\mathbb{R};H^{s-\frac{1}{2}}(\Sigma_0))}^2+\|(\Phi_t,\Psi_t)\|_{L^2(\mathbb{R};H^{s-\frac{1}{2}}(\Sigma_0))}^2.
		\end{split}
	\end{equation*}
    Combining \eqref{Re-1111111112} and \eqref{est-Re-22-1}, we obtain
    \begin{equation*}
		\begin{split}
			\|\Lambda_h^{s-1}(\partial_t\nabla\varphi_2,\nabla^3\varphi_2)\|^2_{L^2(\mathbb{R}\times\Omega)}
			%&\lesssim \| \frac{|\varsigma_h|^{\frac{1}{2}}}{|\mathrm{Re}\,r|}\|_{L^{\infty}_t(L^{\infty}_h)}\||r|^2\langle\varsigma_h\rangle^{N-\frac{1}{2}}\hat{\Phi}\|_{L^2(L^2_h)}\\
			&\lesssim \|\nabla_h^2(\Phi,\Psi)\|_{L^2(\mathbb{R};H^{s-\frac{1}{2}}(\Sigma_0))}^2+\|(\Phi_t,\Psi_t)\|_{L^2(\mathbb{R};H^{s-\frac{1}{2}}(\Sigma_0))}^2.
		\end{split}
	\end{equation*}
    Similarly, we have
    \begin{equation*}
		\begin{split}
			\|\Lambda_h^{s-1}(\partial_t \varphi,\nabla^2 \varphi)\|^2_{L^2(\mathbb{R}\times\Omega)}
\lesssim\|\nabla_h^2(\Phi,\Psi)\|_{L^2(\mathbb{R};H^{s-\frac{1}{2}}(\Sigma_0))}^2+\|(\Phi_t,\Psi_t)\|_{L^2(\mathbb{R};H^{s-\frac{1}{2}}(\Sigma_0))}^2.
		\end{split}
	\end{equation*}
Hence, thanks to \eqref{EB-multi-3.10-3}, \eqref{EB-multi-3.11}, and Plancherel's identity, we get
\begin{equation}\label{Low1andh}
     \begin{split}
		&\|\Lambda_h^{s-1}( \nabla^2\varphi, \nabla^2\nabla_h\varphi, \partial_t\varphi, \partial_t\nabla_h\varphi,\partial_t\nabla\varphi_2,\nabla^3\varphi_2)\|_{L^2(\mathbb{R}\times\Omega)}^2\\
&
 \lesssim\|\Lambda^{s-1}_h(\widetilde{F}_3,F_2)\|_{L^2_T(H^1(\Omega))}^2
 +\|\Lambda_h^{s-1}B^{(1)}_t\|^2_{L^2_T(L^2(\Omega))}\|\Lambda_h^{s-1}K^{(1)}\|^2_{L^{\infty}_T(H^1(\Omega))}\\
 &\qquad\qquad\qquad\qquad\qquad\qquad
+	\|\Lambda_h^{s-1}B^{(1)}\|^2_{L^{\infty}_T(H^1(\Omega))}
\|\Lambda_h^{s-1} K^{(1)}_t\|^2_{L^2_T(L^2(\Omega))}.
     \end{split}
	\end{equation}
In order to get the full regularity of $\varphi$, we need to get the estimates of $\Lambda_h^{s-1}\partial_t\partial_1\varphi_1$ and $\Lambda_h^{s-1}\partial_1\partial_1\nabla\varphi_1$. Due to $\partial_1\partial_1\nabla\varphi_1=\partial_t\nabla\varphi_1-\Delta_h\nabla\varphi_1$, we just need to recover the estimate of $\Lambda_h^{s-1}\partial_t\partial_1\varphi_1$.

Due to \eqref{EB-multi-3.11}, we can get the estimate of $\partial_t\partial_1\varphi_1$ near the bottom as follows
	\begin{equation*}
    \begin{split}
    \|\Lambda_h^{s-1}\partial_t\partial_1\varphi_{1,b}\|_{L^2(\mathbb{R}\times\Omega)}^2
    \lesssim &\|\chi_{b}\Lambda_h^{s-1}\partial_t\partial_1 \varphi_1 \|_{L^2(\mathbb{R}\times\Omega)}^2+\|\chi'_{b}\Lambda_h^{s-1}\partial_t \varphi_1 \|_{L^2(\mathbb{R}\times\Omega)}^2\\
    \lesssim &\|\widetilde{F}_3^{\perp}\|_{L^2_T(H^{s-\frac{1}{2}}(\Sigma_0))}^2,
    \end{split}
	\end{equation*}
	which implies
	\begin{equation}\label{EB-multi-3.14-11}
		 \begin{split} \|\Lambda_h^{s-1}\partial_t\partial_1\varphi_1\|_{L^2(\mathbb{R}\times\Omega_b)}^2\lesssim \|\widetilde{F}_3^{\perp}\|_{L^2_T(H^{s-\frac{1}{2}}(\Sigma_0))}^2.
         \end{split}
	\end{equation}
We are now in a position to estimate $\partial_t\partial_1\varphi_1$ near the upper boundary. Let's first consider the function $\psi=(0,\psi^2,\psi^3)^T$ with
\begin{equation}\label{EB-multi-3.14-1}
\begin{cases}
\psi^2 :=\partial_t\varphi_1^2-\Delta_h\varphi_1^2-\partial_3\partial_3\varphi_1^2+\partial_3\partial_2\varphi_1^3-\frac{1}{\nu}\widetilde{F}_3^3, \\
\psi^3 :=\partial_t\varphi^3-\Delta_h\varphi_1^3-\partial_2\partial_2\varphi_1^3+\partial_2\partial_3\varphi_1^2+\frac{1}{\nu}\widetilde{F}_3^2. \end{cases}
\end{equation}
	which implies that $\psi_f:=\psi\chi_f$ satisfies
\begin{equation}\label{ext-heat-eqns-1}
	\begin{cases}
		(\partial_t-\Delta)\psi_f=\mathfrak{F}_1+\nabla\cdot\mathfrak{F}_2\quad (\forall\,\,(t, x)\in \mathbb{R}\times \Omega),\\
		\psi_{f}|_{x_1=0}=0,\\
		\psi_{f}|_{x_1=-b}=0,
	\end{cases}
	\end{equation}
where
$$\mathfrak{F}_1=(0,   \Re^2 -\frac{1}{\nu}(B^{2, 3}_t:\nabla_h K^{(2)}- \partial_{\alpha} B^{2, 3}_{\alpha,i}(K^{(2)})^{i}_t)\chi_f,\Re^3+\frac{1}{\nu}(B^{2, 2}_t:\nabla_h K^{(2)}-\partial_{\alpha} B^{2, 2}_{\alpha,i}(K^{(2)})^{i}_t)\chi_f)^T$$ and $\mathfrak{F}_2$ is a $3\times 3$ matrix defined by
    \[
	 \mathfrak{F}_2=\frac{1}{\nu}\left(
     \begin{array}{cccc}
           0 &  0 & 0 \\
            - \partial_1(\widetilde{F}_3^3\chi_f)  &  -(\partial_2\widetilde{F}_3^3+ (B^{2, 3}_{2,i}(K^{(2)})_t^i)\chi_f&
            -  (2\nu U^1_t+\partial_3\widetilde{F}_3^3 + B^{2, 3}_{3,i}(K^{(2)})_t^i)\chi_f\\
               \partial_1(\widetilde{F}_3^2\chi_f)  &
             (2\nu U^1_t+\partial_2\widetilde{F}_3^2+ B^{2, 2}_{2,i}(K^{(2)})_t^i)\chi_f&
 (\partial_3\widetilde{F}_3^2 + (B^{2, 2}_{3,i}(K^{(2)})_t^i)\chi_f
          \end{array}
     \right),
	\]
	and $\Re=(0, \Re^2, \Re^3)^T$ with
\begin{equation*}
	\begin{split}			
\Re^2:=-(2\partial_1\varphi_{1,t}^2\chi'_f+\varphi_{1,t}^2\chi''_f)
-(\partial_2^2+2\partial_3^2)(2\partial_1\varphi_1^2\chi_f'+\varphi_1^2\chi_f'')
+\partial_3\partial_2(2\partial_1\varphi_1^3\chi_f'+\varphi_1^3\chi_f''),\\	
\Re^3:=-(2\partial_1\varphi_{1,t}^3\chi'_f+\varphi_{1,t}^3\chi''_f)-( \partial_3^2+2\partial_2^2)(2\partial_1\varphi_1^3\chi_f'+\varphi_1^3\chi_f'')
+\partial_2\partial_3(2\partial_1\varphi_1^2\chi_f'+\varphi_1^2\chi_f'').
		\end{split}
	\end{equation*}
By  acting  the Fourier transform $\mathcal{F}_{t\to \tau}$ with respect to time variable $t$ to the system \eqref{ext-heat-eqns-1} , we obtain
		\begin{equation}\label{EB-multi-77.4}
		-\mathrm{i}\tau\widehat{\psi_f}-\Delta\widehat{\psi_f}=\widehat{\mathfrak{F}_1}+\nabla\cdot\widehat{\mathfrak{F}_2}.
		\end{equation}
		Then, we multiply the $i$th component of the equations of \eqref{EB-multi-77.4} by the conjugate of $\widehat{\psi_f}^i$, sum over $i$, and integrate over $\Omega$ to find
\begin{equation*}
 \begin{split}		
 -\int_{\Omega}\mathrm{i}\tau|\widehat{\psi_f}|^2\,dx+\int_{\Omega}|\nabla\widehat{\psi_f}|^2\,dx
 =(\widehat{\mathfrak{F}_1},\widehat{\psi_f})_{L^2}+(\widehat{\mathfrak{F}_2},\nabla\widehat{\psi_f})_{L^2}
\end{split}
\end{equation*}
		Keeping the real part and integrating both sides with respect to $\tau$ over $\mathbb{R}$, we have
	\begin{equation*}
		\begin{split}
		\|\nabla\psi_f\|_{L^2(\mathbb{R}; L^2(\Omega))}^2
			&=\int_{\mathbb{R}}\mathrm{Re}(\widehat{\mathfrak{F}_1},\widehat{\psi_f})_{L^2}\,d\tau
			+\int_{\mathbb{R}}\mathrm{Re}(\widehat{\mathfrak{F}_2},\nabla\widehat{\psi_f})_{L^2}\,d\tau\\
			&\lesssim
			\|\widehat{\mathfrak{F}_1}\|_{L^2(\mathbb{R}; L^2(\Omega))}\|\widehat{\psi_f}\|_{L^2(\mathbb{R}; L^2(\Omega))}
			+\|\widehat{\mathfrak{F}_2}\|_{L^2(\mathbb{R}; L^2(\Omega))}\|\nabla\widehat{\psi_f}\|_{L^2(\mathbb{R}; L^2(\Omega))}\\
			&\lesssim
			\|\mathfrak{F}_1\|_{L^2(\mathbb{R}; L^2(\Omega))}\|\psi_f\|_{L^2(\mathbb{R}; L^2(\Omega))}
			+\|\mathfrak{F}_2\|_{L^2(\mathbb{R}; L^2(\Omega))}\|\nabla\psi_f\|_{L^2(\mathbb{R}; L^2(\Omega))},
		\end{split}
		\end{equation*}
		which along with Poincar\'e inequality leads to
		\begin{equation*}
		\|\nabla\psi_f\|^2_{L^2(\mathbb{R}; L^2(\Omega))}
		\lesssim
		\|\mathfrak{F}_1\|^2_{L^2(\mathbb{R}, L^2(\Omega))}+\|\mathfrak{F}_2\|^2_{L^2(\mathbb{R}; L^2(\Omega))}.
		\end{equation*}
Along the same lines, we obtain
	\begin{equation*}\label{ext-heat-est-2}
    \begin{split}
	&	\|\Lambda_h^{s-1}\nabla\psi_f\|_{L^2(\mathbb{R}, L^2(\Omega))}^2\lesssim \|\Lambda_h^{s-1}\mathfrak{F}_1\|_{L^2(\mathbb{R}; L^2(\Omega))}^2+\|\Lambda_h^{s-1}\mathfrak{F}_2\|_{L^2(\mathbb{R}; L^2(\Omega))}^2,
      \end{split}
	\end{equation*}
   where
   \begin{equation*}
    \begin{split}
    &\|\Lambda_h^{s-1}\mathfrak{F}_1\|_{L^2(\mathbb{R}; L^2(\Omega))}^2+\|\Lambda_h^{s-1}\mathfrak{F}_2\|_{L^2(\mathbb{R}; L^2(\Omega))}^2 \\
    &\lesssim\|\Lambda_h^{s-1} \widetilde{F}_3 \|_{L^2(\mathbb{R}; H^1(\Omega))}^2+\|\Lambda_h^{s-1} U_t \|_{L^2(\mathbb{R}; L^2(\Omega))}^2+\|\Lambda_h^{s-1} B^{2}_t\|_{L^2(\mathbb{R}; L^2(\Omega))}^2\|\Lambda_h^{s-1}\nabla_h K^{(2)}\|_{L^\infty(\mathbb{R}; L^2(\Omega))}^2\\
      &+\| \Lambda_h^{s-1} B^{2}\|_{L^\infty(\mathbb{R}; H^1(\Omega))}^2\|\Lambda_h^{s-1} K^{(2)}_t\|_{L^2(\mathbb{R}; L^2(\Omega))}^2.
      \end{split}
	\end{equation*}
	Combining \eqref{Low1andh} with \eqref{EB-multi-3.14-1} yields
	\begin{equation*}
		 \begin{split}
&\|\Lambda_h^{s-1}\partial_t\partial_1\varphi_1\|_{L^2(\mathbb{R}; L^2(\Omega_f))}^2
 \lesssim\|\Lambda_h^{s-1} (\widetilde{F}_3,F_2) \|_{L^2(\mathbb{R}; H^1(\Omega))}^2\\
 &\quad+\|\Lambda_h^{s-1} B^{2}_t\|_{L^2_T(L^2(\Omega))}^2\|\Lambda_h^{s-1}\nabla_h K^{(2)}\|_{L^\infty_T(L^2(\Omega))}^2
       + \| \Lambda_h^{s-1} B^{2}\|_{L^\infty_T( H^1(\Omega))}^2\|\Lambda_h^{s-1} K^{(2)}_t \|_{L^2_T(L^2(\Omega))}^2\\
      &\quad+\|\Lambda_h^{s-1} B^{1}_t\|^2_{L^2_T(L^2(\Omega))}\|\Lambda_h^{s-1} K^{(1)}\|^2_{L^{\infty}_T(H^1(\Omega))}
+	\|\Lambda_h^{s-1}B^{1}\|^2_{L^{\infty}_T(H^1(\Omega))}
\|\Lambda_h^{s-1} K^{(1)}_t\|^2_{L^2_T(L^2(\Omega))}.
      \end{split}
	\end{equation*}
	which along with \eqref{EB-multi-3.14-11} leads to
	\begin{equation}\label{phi-es-part2}
     \begin{split}
		&\|\Lambda_h^{s-1}\partial_t\partial_1\varphi_1 \|_{L^2(\mathbb{R}; L^2(\Omega))}^2
  +\|\Lambda_h^{s-1}\partial_1\partial_1\nabla\varphi_1\|_{L^2(\mathbb{R}; L^2(\Omega))}^2\\
&\lesssim \|\Lambda_h^{s-1} (\widetilde{F}_3,F_2) \|_{L^2(\mathbb{R}; H^1(\Omega))}^2+\|\Lambda_h^{s-1} (B^{1}_t, B^{2}_t)\|^2_{L^2_T(L^2(\Omega))}\|\Lambda_h^{s-1} (K^{(1)}, K^{(2)})\|^2_{L^{\infty}_T(H^1(\Omega))}\\
      &\quad\quad+\|\Lambda_h^{s-1}  (B^{1}, B^{2})\|^2_{L^{\infty}_T(H^1(\Omega))}
\|\Lambda_h^{s-1} (K^{(1)}_t, K^{(2)}_t)\|^2_{L^2_T(L^2(\Omega))}.
      \end{split}
	\end{equation}
So far, we get the sufficient regularity of $\varphi$,
and then, by the construction of $V_1$, we have
	\begin{equation}\label{EB-multi-3.12}
		\begin{split}
			\mathcal{F}(V_1)(\tau,x_1,\varsigma_h)=\mathcal{F}(\nabla\times\varphi)
=(\mathrm{i}\varsigma_2\hat{\varphi}^3-\mathrm{i}\varsigma_3\hat{\varphi}^2,-\partial_1\hat{\varphi}^3,\partial_1\hat{\varphi}^2)^T.
		\end{split}
	\end{equation}
 which along with \eqref{Low1andh} and \eqref{phi-es-part2} implies
	\begin{equation*}
		\begin{split}
	&\|\Lambda_h^{s-1}\partial_tV_1\|_{L^2(\mathbb{R}; L^2(\Omega))}^2+\|\Lambda_h^{s-1}\nabla V_1\|_{L^2(\mathbb{R}; H^1(\Omega))}^2\\
	&\lesssim \|\Lambda_h^{s-1} (\widetilde{F}_3,F_2) \|_{L^2(\mathbb{R}; H^1(\Omega))}^2+\|\Lambda_h^{s-1} (B^{1}_t, B^{2}_t)\|^2_{L^2_T(L^2(\Omega))}\|\Lambda_h^{s-1} (K^{(1)}, K^{(2)})\|^2_{L^{\infty}_T(H^1(\Omega))}\\
      &\quad\quad+\|\Lambda_h^{s-1}  (B^{1}, B^{2})\|^2_{L^{\infty}_T(H^1(\Omega))}
\|\Lambda_h^{s-1} (K^{(1)}_t, K^{(2)}_t)\|^2_{L^2_T(L^2(\Omega))}.
		\end{split}
	\end{equation*}
%%%%%%%%%%%%%%%%%%%
{\bf Step\,2.2:The construction of $V_2$ and the estimate of $\|\Lambda_h^{s-1}(\partial_tV_2,\nabla V_2,\nabla^2 V_2)\|_{L^2_T(L^2(\Omega))}$  }

	However, we find that $V_1$ might not satisfy
	$V_1^1=0$ on $\mathbb{R} \times\Sigma_b$,
%which is a difficulty here and a  difference from \cite{Danchin2020},
	so we need construct a suitable divergence-free vector field $V_2=(V_2^1,V_2^2,V_2^3)^T$ satisfying
	\begin{equation}\label{EB-multi-3.13}
		\begin{cases}
\nabla\cdot\,V_2=0&  \text{in}\, \mathbb{R}\times\Omega,\\
\partial_1V_2^{\beta}+\partial_{\beta}V_2^1 =0&  \text{on}\, \mathbb{R}\times\Sigma_0 \quad (\beta=2,3),\\
	V_2^{\beta}=0 &  \text{on}\,\mathbb{R}\times\Sigma_b,\\
			V_2^1 = -V_1^1  &  \text{on}\, \mathbb{R}\times\Sigma_b.
		\end{cases}
	\end{equation}
Take the Fourier transformation \,$\widehat{\cdot}$ \,in terms of $(t, x_h)$ to \eqref{EB-multi-3.13}, and define
	\begin{equation*}\label{EB-multi-3.14aa}
	\begin{split}
\widehat{V}^1_2 \eqdefa  -\chi_{b}\widehat{V}_1^1,
    \end{split}
	\end{equation*}
   we have
   \begin{equation}\label{V-23-b}
		\begin{cases}
 i\varsigma_2\widehat{V}_2^2+i\varsigma_3\widehat{V}_2^3=-\partial_1\widehat{V}^1_2&  \text{in}\, \mathbb{R}\times\Omega,\\
\partial_1\widehat{V}_2^{\beta}=-i\varsigma_{\beta}\widehat{V}_2^1=0 &  \text{on}\, \mathbb{R}\times\Sigma_0 \quad (\beta=2,3),\\
	V_2^{\beta}=0 &  \text{on}\,\mathbb{R}\times\Sigma_b .
		\end{cases}
	\end{equation}
    Let $V_2^{\beta}\eqdefa \partial_{\beta}\phi-(-\Delta_h)^{-1}\partial_{\beta}\partial_1^2\phi$, that is,
    \begin{equation}\label{VV22333444}
	\begin{split}
\widehat{V}^{\beta}_2 =  i\varsigma_{\beta}\widehat{\phi}-\frac{i\varsigma_{\beta}}{|\varsigma_h|^2}\partial_1^2\widehat{\phi},\quad (\beta=2,3),
    \end{split}
	\end{equation}
    where $\phi$ satisfy
    \begin{equation}\label{phi-V-2}
		\begin{cases}
 \frac{\mathrm{d}^2}{\mathrm{d}x_1^2}\widehat{\phi}-|\varsigma_h|^2\widehat{\phi}=-\partial_1\widehat{V}^1_2&  \text{in}\, \mathbb{R}\times\Omega,\\
\partial_1\widehat{\phi}=0 &  \text{on}\, \mathbb{R}\times\Sigma_0,\\
	\widehat{\phi}=0 &  \text{on}\,\mathbb{R}\times\Sigma_b .
		\end{cases}
	\end{equation}
     By solving \eqref{phi-V-2}, we can get
     \begin{equation*}
	\begin{split}
     \widehat{\phi}=&\frac{\mathrm{e}^{|\varsigma_h|(x_1+b)}-\mathrm{e}^{|\varsigma_h|(-x_1+b) } }{2|\varsigma_h|(\mathrm{e}^{|\varsigma_h| b} +\mathrm{e}^{-|\varsigma_h| b}  )}
     \int_{-b}^{0}\partial_1\widehat{V}_2^1(\mathrm{e}^{|\varsigma_h| \bar{x}_1} +\mathrm{e}^{-|\varsigma_h| \bar{x}_1})\mathrm{d}\bar{x}_1\\
     &+\frac{1}{2|\varsigma_h|}\int_{-b}^{x_1}\partial_1\widehat{V}_2^1
     (\mathrm{e}^{|\varsigma_h| (\bar{x}_1-x_1)} +\mathrm{e}^{-|\varsigma_h| (\bar{x}_1-x_1)})\mathrm{d}\bar{x}_1
    \end{split}
	\end{equation*}
     then we can verify that $\widehat{V}^{\beta}$ $(\beta=2,3)$ defined by \eqref{VV22333444} satisfies \eqref{V-23-b}, and combining with
     \begin{equation*}
	\begin{split}
     \widehat{V}_2^1=-\chi_b\widehat{V}_1^1= -\chi_b\bigg(\frac{\mathrm{e}^{-rx_1}+\mathrm{e}^{r(2b+x_1)}}{\mathrm{e}^{2rb}+1} (i\varsigma_2\widehat{\Phi}^3-i\varsigma_3\widehat{\Phi}^2)
      -\frac{|\varsigma_h|^2(\mathrm{e}^{rx_1}-\mathrm{e}^{-rx_1})}{r(\mathrm{e}^{rb}+\mathrm{e}^{-rb})}\widehat{\Psi}|_{x_1=-b}\bigg),
     \end{split}
	\end{equation*}
    we can get
    \begin{equation*}
		\begin{split}
			&\|\Lambda_h^{s-1}\partial_tV_2\|_{L^2_T(L^2(\Omega))}^2+\|\Lambda_h^{s-1}\nabla V_2\|_{L^2_T(H^1(\Omega))}^2 \\
			&\lesssim
		\|\nabla_h(\Phi,\Psi)\|_{L^2(\mathbb{R};H^{s+\frac{1}{2}}(\Sigma_0))}^2+\|(\Phi_t,\Psi_t)\|_{L^2(\mathbb{R};H^{s-\frac{1}{2}}(\Sigma_0))}^2.
		\end{split}
	\end{equation*}
    Moreover, we can also obtain
	\begin{equation*}
		 \begin{split}
		&\|\Lambda_{h}^{s-1} V_2 (0)\|^2_{H^{1}(\Omega)}+\|\Lambda_{h}^{s-1} \nabla V_2\|^2_{ L^{2}(\Omega;L^{\infty}_t)}\\
		& \lesssim \|\nabla_h(\Phi,\Psi)\|_{L^2(\mathbb{R};H^{s+\frac{1}{2}}(\Sigma_0))}^2+\|(\Phi_t,\Psi_t)\|_{L^2(\mathbb{R};H^{s-\frac{1}{2}}(\Sigma_0))}^2 ,
		 \end{split}
	\end{equation*}
	and according to Lebesgue dominated convergence theorem, we have $\|\Lambda_h^{s-1}\nabla V_2(t)\|_{H^1(\Omega)}$ is continuous on $[0,T]$.
	Taking $V=V_1+V_2$ and $P_1=2\nu\partial_1 V^1+\widetilde{F}_3^1$, we can see that $(V, P_1)$ satisfies \eqref{EB-multi-3.4} and
	\begin{equation*}
		\begin{split}
			&\|\Lambda_h^{s-1}\partial_tV\|_{L^2_T(L^2(\Omega))}^2+\|\Lambda_h^{s-1}\nabla V\|_{L^2_T(H^1(\Omega))}^2+\|\Lambda_h^{s-1}P_1\|_{L^2_T(H^1(\Omega))}^2\\
			&\lesssim
			\|\Lambda_h^{s-1}(\widetilde{F}_3,U)\|_{L^2_T(H^1(\Omega))}^2+\|\Lambda_h^{s-1} B^{2}_t\|_{L^2(L^2(\Omega))}^2\|\Lambda_h^{s-1}\nabla_h K^{(2)}\|_{L^{\infty}_T(L^2(\Omega))}^2\\
      &\quad+\| \Lambda_h^{s-1} B^{2}\|_{L^{\infty}_T(H^1(\Omega))}^2 \|\Lambda_h^{s-1} K^{(2)}_t \|_{L^2_T(L^2(\Omega))}^2,
		\end{split}
	\end{equation*}
	which along with Sobolev's embedding theorem lead to $\Lambda_h^{s-1} V\in\mathcal{C}([0,T]; L^2(\Omega))$.\\
   {\bf Step\,2.3:The estimates of $\|\Lambda_h^{s-1}V\|_{L^{\infty}_T(H^1(\Omega))}$}

	Next, we want to consider the continuity of $\|\Lambda_h^{s-1}\nabla V(t)\|_{L^2(\Omega)}$ in $[0,T]$.  However, by using \eqref{EB-multi-3.11} and \eqref{EB-multi-3.12} we can just get the  estimate near the bottom as follow
	\begin{equation*}
     \begin{split}
&\|\Lambda_{h}^{s-1} \nabla V_1(0)\|_{L^{2}(\Omega_{\frac{1}{2}b})}+\|\Lambda_{h}^{s-1} \nabla V_1\|_{L^{2}(\Omega_{\frac{1}{2}b} ;{L}^{\infty}_t)} \\
		&\lesssim  \bigg\||\varsigma_h|^2\langle\varsigma_h\rangle^{s-1} \frac{\mathrm{e}^{-rx_1}+\mathrm{e}^{r(2b+x_1)}}{\mathrm{e}^{2rb}+1}
\widehat{\Phi}\chi_{\{-b\leq x_1\leq -\frac{1}{2}b\}}\bigg\|_{L^2_h(L^2_{x_1}(L^1_{\tau}))} \\
    &\lesssim \bigg\|   \frac{ \mathrm{e}^{\mathrm{Re}\,r(2b+x_1)}}{\mathrm{e}^{2\mathrm{Re}\,rb}+1}\chi_{\{-b\leq x_1\leq -\frac{1}{2}b\}}\bigg\|_{L^{\infty}_h(L^2_{x_1}(L^2_{\tau}))} \||\varsigma_h|^2\langle\varsigma_h\rangle^{s-1} \hat{\Phi}\|_{L^2_h(L^2_{\tau}) }
    \lesssim  \|\widetilde{F}_3^{\perp}\|_{L_{\tau}^2(H^{s-\frac{1}{2}}(\Sigma_0))} ,
     \end{split}
	\end{equation*}
	where  $\Omega_{\frac{1}{2}b}:=\{x\in\Omega|-b\leq x_1\leq-\frac{1}{2}b\}$.

Let's now find the information about $\Lambda_h^{s-1}\nabla V_1(t)$ near the free upper bound.
	On the one hand, we find that $V_1$ satisfies
	\begin{equation*}\label{EB-multi-3.22-1}
       \begin{split}
		\frac{\mathrm{d}}{\mathrm{d}t}\|\Lambda_h^{s-1}\nabla_hV_1\|_{L^2(\Omega_f)}^2
		&=2\langle\Lambda_h^{s-1}\nabla_h\partial_tV_1,\Lambda_h^{s-1}\nabla_hV_1\chi_f\rangle\\
		&=-2(\Lambda_h^{s-1}\partial_tV_1,\Lambda_h^{s-1}\Delta_hV_1\chi_f)_{L^2(\Omega)}
       \end{split}
	\end{equation*}
	and
	\begin{equation*}\label{EB-multi-3.23-1}
		\Lambda_h^{s-1}\partial_1V_1^1=-\Lambda_h^{s-1}\nabla_h\cdot V_{1,h}.
	\end{equation*}
	According to Lemma \ref{Lions-M-lem}, we know that $\|\Lambda_h^{s-1}(\partial_1V_1^1,\nabla_hV_1)(t)\|_{L^2(\Omega_f)}$ is continuous on $[0,T]$.
	Moreover, we have
	\begin{equation*}
		%\begin{split}
		\sup_{-\infty<t\leq T}\|\Lambda_h^{s-1}(\partial_1V_1^1,\nabla_hV_1)\|^2_{L^2(\Omega_f)}\leq   2\|\Lambda_h^{s-1}\partial_tV_1\|_{L^2(-\infty,T;L^2(\Omega_f))}\|\Lambda_{h}^{s-1} \nabla V_1\|_{L^2(-\infty,T;L^2(\Omega_f))}
		%\end{split}
	\end{equation*}
	
	On the other hand, inspired by the "good unknowns" in \cite{Gui2020}, we define $\mathcal{G}^{\beta}$ as
	\begin{equation*}
		\mathcal{G}^{\beta}:=\partial_1V_{1,f}^{\beta}
		+\partial_{\beta}V_{1,f}^1-\frac{1}{\nu}\widetilde{F}_{3,f}^{\beta}\,(\mbox{with} \,\beta=2, 3)
	\end{equation*}
	which along with the second equation in \eqref{EB-multi-3.4} implies $\mathcal{G}^{\beta}|_{x_1=0}=0$. Then,
	we find that $ V^{\beta}$$(\beta=2,3)$ satisfies
	\begin{equation*}
		\begin{split}
			&\int_{\Omega}\Lambda_h^{s-1}\partial_tV_{1,f}^{\beta}  \cdot \Lambda_h^{s-1}\partial_1\partial_1V_{1,f}^{\beta}\mathrm{d}x
			=\int_{\Omega}\Lambda_h^{s-1}\partial_tV_{1,f}^{\beta} \cdot \Lambda_h^{s-1}\partial_1(\mathcal{G}^{\beta}-\partial_{\beta}V_{1,f}^1+\frac{1}{\nu}\widetilde{F_3}^{\beta})\mathrm{d}x\\
			&= -\langle \Lambda_h^{s-1}\partial_t\partial_1V^{\beta}_{1,f},\Lambda_h^{s-1}\mathcal{G}^{\beta}\rangle
			-\int_{\Omega}\Lambda_h^{s-1}\partial_tV_{1,f}^{\beta} \cdot \Lambda_h^{s-1}(\partial_1\partial_{\beta}V_{1,f}^1-\frac{1}{\nu}\partial_1\widetilde{F}_{3,f}^{\beta})\mathrm{d}x\\	&=-\frac{1}{2}\frac{\mathrm{d}}{\mathrm{d}t}\|\Lambda_h^{s-1}\partial_1V_{1,f}^{\beta}\|_{L^2(\Omega)}^2
			-\langle\Lambda_h^{s-1}\partial_t\partial_1V^{\beta}_{1,f},\Lambda_h^{s-1}\partial_{\beta}V^1\rangle
			+\langle\Lambda_h^{s-1}\partial_t\partial_1V^{\beta}_{1,f},\frac{1}{\nu}\Lambda_h^{s-1}\widetilde{F}_{3,f}^{\beta}\rangle\\
			&\quad-\int_{\Omega} \Lambda_h^{s-1}\partial_tV_{1,f}^{\beta} \cdot \Lambda_h^{s-1}(\partial_1\partial_{\beta}V_{1,f}^1-\frac{1}{\nu}\partial_1\widetilde{F}_{3,f}^{\beta})\mathrm{d}x,
		\end{split}	
	\end{equation*}
	which lead to
	\begin{equation*}
		\begin{split}
			&\frac{1}{2}\frac{\mathrm{d}}{\mathrm{d}t}\left(\|\Lambda_h^{s-1}\partial_1V_{1,f}^{\beta}\|_{L^2(\Omega)}^2
			+\int_{\Omega}\Lambda_h^{s-1}\partial_1V_{1,f}^{\beta} \cdot \Lambda_h^{s-1}\partial_{\beta}V_{1,f}^1 \mathrm{d}x
			-\int_{\Omega}\Lambda_h^{s-1}\partial_1V_{1,f}^{\beta}\cdot \frac{1}{\nu}\Lambda_h^{s-1}\widetilde{F}_{3,f}^{\beta}\mathrm{d}x\right) \\
			&\quad\quad =- \int_{\Omega} \Lambda_h^{s-1}\partial_1\partial_1V_{1,f}^{\beta} \cdot \Lambda_h^{s-1}\partial_tV_{1,f}^{\beta}\mathrm{d}x
			- \int_{\Omega} \Lambda_h^{s-1}\partial_1\partial_{\beta}V_{1,f}^{\beta} \cdot \Lambda_h^{s-1}\partial_tV^1_{1,f}\mathrm{d}x\\
			& \quad\quad\quad +\langle\frac{1}{\nu}\Lambda_h^{s-1}(\partial_tF_{3,f}^{\beta}
         +\nu\partial_{\beta}\partial_tU^1_f+\nu\partial_1U_t^{\beta}\chi_f),
             \Lambda_h^{s-1}\partial_1V_{1,f}^{\beta}\rangle
			\\
&\quad\quad\quad -\int_{\Omega} \Lambda_h^{s-1}\partial_tV_{1,f}^{\beta}\cdot \Lambda_h^{s-1}(\partial_1\partial_{\beta}V_{1,f}^1-\frac{1}{\nu}\partial_1\widetilde{F}_{3,f}^{\beta})\mathrm{d}x,
		\end{split}	
	\end{equation*}
	where
	\begin{equation*}
		\begin{split}
	&\langle\frac{1}{\nu}\Lambda_h^{s-1}(\partial_tF_{3,f}^{\beta}
         +\nu\partial_{\beta}\partial_tU^1_f),
             \Lambda_h^{s-1}\partial_1V_{1,f}^{\beta}\rangle	\\	
			=&-\int_{\Omega} \frac{1}{\nu}\Lambda_h^{s-1}(B^2K^{(2)}_t)\chi_f \cdot \Lambda_h^{s-1}\partial_1\nabla_hV_{1,f}^{\beta}\mathrm{d}x
			+\int_{\Omega}\frac{1}{\nu}\Lambda_h^{s-1}((\nabla_h\cdot B^2)K_t^{(2)})\chi_f\cdot \Lambda_h^{s-1}\partial_1V_{1,f}^{\beta}\mathrm{d}x\\
			&+\int_{\Omega}\frac{1}{\nu}\Lambda_h^{s-1}(B_t^2:\nabla_h K^{(2)})\chi_f\cdot \Lambda_h^{s-1}\partial_1V_{1,f}^{\beta}\mathrm{d}x
			-\int_{\Omega}\Lambda_h^{s-1}\partial_tU^1_f
\cdot\Lambda_h^{s-1}\partial_{\beta}\partial_1V_{1,f}^{\beta}\mathrm{d}x,
		\end{split}	
	\end{equation*}
	and
	\begin{equation*}
		\begin{split}
			&\langle\Lambda_h^{s-1}\partial_1\partial_tU^{\beta}\chi_f,\Lambda_h^{s-1}\partial_1V_{1,f}^{\beta}\rangle
			=\langle\Lambda_h^{s-1}\partial_1\partial_{\beta}\partial_t\Psi\chi_f
      ,\Lambda_h^{s-1}\partial_1V_{1,f}^{\beta}\rangle\\
   =&\langle\Lambda_h^{s-1}\partial_{\beta}U^1_t\chi_f,\Lambda_h^{s-1}\partial_1V_{1,f}^{\beta}\rangle
			=-\int_{\Omega}\Lambda_h^{s-1}U^1_t \chi_f\cdot \Lambda_h^{s-1}\partial_1\partial_{\beta}V_{1,f}^{\beta}\mathrm{d}x.
		\end{split}	
	\end{equation*}
	Denoting
	\begin{equation*}
		\mathcal{E}_1^{\beta}(t):=\frac{1}{2}\|\Lambda_h^{s-1}\partial_1V_{1,f}^{\beta}\|_{L^2(\Omega)}^2
		-4\|\Lambda_h^{s-1}\partial_{\beta}V_{1,f}^1\|^2_{L^2}
		-4\|\frac{1}{\nu}\Lambda_h^{s -1}\widetilde{F}_{3,f}^{\beta}\|^2_{L^2},
	\end{equation*}
	we have
	\begin{equation*}
    \begin{split}
		\sup_{-\infty<t\leq T}\mathcal{E}_1^{\beta}(t)
\lesssim & \|\Lambda_h^{s-1}(\widetilde{F}_{3,f}^{\beta},V_t,U_t,\nabla V,\nabla^2 V)\|^2_{L^2(-\infty,T; L^2(\Omega))}\\
+ \|\Lambda_h^{s-1}& B^{2}_t\|_{L^2_T(L^2 )}^2\|\Lambda_h^{s-1}\nabla_h K^{(2)}\|_{L^{\infty}_T(L^2 )}^2
      +\| \Lambda_h^{s-1} B^{2}\|_{L^{\infty}_T(H^1 )}^2 \|\Lambda_h^{s-1} K^{(2)}_t\|_{L^2_T(L^2)}^2.
    \end{split}
	\end{equation*}
	Since $\|\Lambda_h^{s-1}(\partial_{\beta}V_{1,f}^1(t),\widetilde{F}_{3,f}^{\beta}(t))\|_{L^2}\in \mathcal{C}([0,T])$, so $\|\Lambda_h^{s-1}\partial_1V_{1,f}^{\beta}(t)\|_{L^2}$ continuous on $[0,T]$ due to Lemma \ref{Lions-M-lem}. Moreover, there is
	\begin{equation*}
		\sup_{0\leq t\leq T}\|\Lambda_h^{s-1}\partial_1V_{1,f}^{\beta}(t)\|^2_{L^2(\Omega)}\leq 2\sup_{0\leq t\leq T}\left(\mathcal{E}_1^{\beta}(t)+4\|\Lambda_h^{s-1}\partial_{\beta}V^1(t)\|^2_{L^2}
		+4\|\frac{1}{\nu}\Lambda_h^{s-1}\widetilde{F}_3^{\beta}(t)\|^2_{L^2}\right).
	\end{equation*}
	As a whole, we know that $ \Lambda_h^{s-1}V\in\mathcal{C}([0,T];H^1(\Omega))$, and \begin{equation*}
		\begin{split}
			 &\|\Lambda_h^{s-1} V \|^2_{L_T^{\infty}(H^1(\Omega))}      \lesssim \|\Lambda_h^{s-1} F_1\|^2_{L^2_T( L^2(\Omega))}
    \\
			&\quad +  \|\Lambda_h^{s-1}(B^{1}, B^{2})\|^2_{L^\infty_T( H^1(\Omega))}(\|\Lambda_h^{s-1}\nabla (K^{(1)}, K^{(2)})\|^2_{L^2_T( H^1(\Omega))}
  +\|\Lambda_h^{s-1}(K^{(1)}_t, K_t^{(2)})\|^2_{L^2_T( L^2(\Omega))})\\
			&\quad +(\|\Lambda_h^{s-1}(B^{1}, B^{2})\|^2_{L^\infty_T( H^1(\Omega))}+\|\Lambda_h^{s-1} (B_t^{1}, B_t^{2})\|^2_{L^2_T(  L^2(\Omega))})\|\Lambda_h^{s-1} (K^{(1)}, K^{(2)})\|^2_{L^\infty_T( H^1(\Omega))} .
		\end{split}
\end{equation*}
	
	Next, setting
	\[
	W := u-U-V, \quad \widetilde{f}_0:=U^1+V^1, \quad \widetilde{f}_1:= \widetilde{F}_1 -V_t+\nabla \cdot \mathbb{D}(V)-\nabla P_1,
	\]
	the initial problem reduces to solving $(W,Q)$ that satisfy
	\begin{equation}\label{EB-multi-3.16}
		\begin{cases}
			\partial_t \xi^1-W^1=\widetilde{f}_0&\quad \text{ on } [0,T]\times\Sigma_0,\\
			\partial_t W -\nu \nabla \cdot \mathbb{D} (W) + \nabla Q = \widetilde{f}_1 &\quad \text{ in } [0,T]\times\Omega,\\
			\nabla \cdot W=0 &\quad \text{ in } [0,T]\times\Omega,\\
			(Q  -g\xi^1)n_0 - \nu \mathbb{D} (W)n_0 =0 &\quad \text{ on } [0,T]\times\Sigma_0,\\
			W=0 & \quad \text{ on } [0,T]\times\Sigma_b,\\
			W|_{t=0} =W_0& \quad \text{ in } \Omega,\\
			\xi^1|_{t=0} = \xi_0^1& \quad \text{ on } \Sigma_0
		\end{cases}
	\end{equation}
with $W_0:= u_0-U_0-V_0$.
	
	{\bf Step 3: Solving the homogeneous boundary equations \eqref{EB-multi-3.16}}
	
    Consider the homogeneous boundary linear system \eqref{EB-multi-3.16}, we can get the following proposition, which will be proved at the end of this section.
	\begin{prop}\label{linear-system}
		Assume that $s>2$, $\widetilde{f}_0 \in L^2(0,T;H^s(\Sigma_0))$, $\Lambda_h^{s-1}\widetilde{f}_1\in L^2(0,T; L^2(\Omega))$, $\xi^1_0 \in  H^s(\Sigma_0)$, and $\Lambda_h^{s-1} W_0\in {_0}H^1(\Omega)$ with $\nabla\cdot W_0=0$, then there exists a unique solution $(\xi^1,W,Q)$ of \eqref{EB-multi-3.16} satisfying
		\begin{equation*}
			\begin{split}
				\xi^1\in\mathcal{C}([0,T];H^s(\Sigma_0)),\, \Lambda_h^{s-1} Q\in L^2(0,T; H^1(\Omega)),\, \Lambda_h^{s-1}W\in \mathcal{C}([0,T]; {_0}H^1(\Omega))\cap L^2(0,T;  H^2(\Omega)).
			\end{split}
		\end{equation*}
		Moreover, there holds
		\begin{equation}\label{unif-est-linear-1}
			\begin{split}
				&\sup_{0\leq t\leq T}\left(\|\xi^1\|^2_{H^s(\Sigma_0)}+\|\Lambda_h^{s-1}W\|^2_{H^1(\Omega)}\right)
				+\|\Lambda_h^{s-1}(\nabla W,Q)\|^2_{L^2(0,T;H^1(\Omega))} \\
				&\leq C (\|\xi^1_0\|^2_{H^s(\Sigma_0)}+\|\Lambda_h^{s-1}W_0\|^2_{H^1(\Omega)}+\|\widetilde{f}_0\|^2_{L^2(0,T;H^s(\Sigma_0))}+\|\Lambda_h^{s-1}\widetilde{f}_1\|^2_{L^2(0,T;L^2(\Omega))}).
			\end{split}
		\end{equation}
	\end{prop}

	{\bf Step 4: End of the proof of Theorem \ref{thm-linear-problim-11}}

   Set $u:=U+V+W$, $P=P_1+Q$, then putting all the previous steps together, we can get \eqref{estimate-all} and then complete the proof of Theorem \ref{thm-linear-problim-11}.
   \end{proof}

	In order to prove Proposition \ref{linear-system}, thanks to \cite{Hataya2011}, we get the Helmholtz decomposition in $\Omega$ as follows
		\begin{equation*}\label{Hel-decomposition}
			L^2(\Omega)=\mathbb{P}L^2(\Omega)\oplus\{\nabla \vartheta\in L^2|\vartheta\in H^1(\Omega), \vartheta=0\text{ on } \Sigma_0 \},
		\end{equation*}
		where $\mathbb{P}L^2(\Omega)\eqdefa  \overline{\{u\in L^2\cap\mathcal{C}^{\infty}(\overline{\Omega}):\nabla\cdot u=0\text{ in } \Omega, u\cdot n_0=0\text{ on }\Sigma_b\}}^{L^2(\Omega)} $. From this, we may decompose the pressure term $\nabla Q$ as $\nabla Q=\mathbb{P}(\nabla Q)+\nabla q$, where $\mathbb{P}(\nabla Q)=\nabla \pi_1+\nabla \pi_2$, and $(\pi_1,\,\pi_2)$ solves the system
		\begin{equation}\label{EB-multi-3.187}
			\begin{cases}
				\Delta \pi_i=0, (i=1,2) &\text{ in }\Omega,\\
				\pi_1=g\xi^1,\, \pi_2=-2\nu(\partial_2 W^2+\partial_3W^3) &\text{ on } \Sigma_0,\\
				\partial_1\pi_1=\partial_1\pi_2=0 &\text{ on } \Sigma_b.\\
			\end{cases}
		\end{equation}
Hence, the system \eqref{EB-multi-3.16} can be reformulated to the following equivalent system
		\begin{equation}\label{EB-multi-3.188}
			\begin{cases}
				\partial_t \xi^1-W^1=\widetilde{f}_0&\quad \text{ on } [0,T]\times\Sigma_0,\\
				\partial_t W -\nu \mathbb{P}\nabla \cdot \mathbb{D} (W) + \nabla \pi_1+\nabla \pi_2 = \mathbb{P}\widetilde{f}_1 &\quad \text{ in } [0,T]\times\Omega,\\
				\nabla \cdot W=0 &\quad \text{ in } [0,T]\times\Omega,\\
				\nu (\partial_1W^2+\partial_2 W^1) =0 &\quad \text{ on } [0,T]\times\Sigma_0,\\
				\nu (\partial_1W^3+\partial_2 W^1) =0 &\quad \text{ on } [0,T]\times\Sigma_0,\\
				W=0 & \quad \text{ on } [0,T]\times\Sigma_b,\\
				W|_{t=0} = W_0 & \quad \text{ in } \Omega,\\
				\xi^1|_{t=0} = \xi_0^1& \quad \text{ on } \Sigma_0.
			\end{cases}
		\end{equation}
		where $\pi_1$ and $\pi_2$ are determined by \eqref{EB-multi-3.187}.

Set $G\eqdefa\left(
			\begin{array}{cc}
				0 & 1\\
				A_1 &A_2
			\end{array}
			\right)$, $D(G)\eqdefa \{(\xi^1, W)^T:\,\,\nabla \cdot W=0 \,\,(\mbox{in}\,\,\Omega),\,\,(\partial_1W^{\alpha}+\partial_{\alpha} W^1)|_{\Sigma_0} =0\, (\mbox{with}\,\,\alpha=2, 3),\,W|_{\Sigma_b} =0\}$,
		where $A_1\xi^1=-\nabla \pi_1$ and $A_2 W=\nu \mathbb{P}\nabla \cdot \mathbb{D} (W)-\nabla \pi_2$. Then, the problem \eqref{EB-multi-3.188} is reduced to solving the following evolution equations
		\begin{equation*}
			\begin{cases}
				\partial_t
				\left(
				\begin{array}{c}
					\xi^{1}\\
					W
				\end{array}\right)
				-G
				\left(
				\begin{array}{c}
					\xi^{1}\\
					W
				\end{array}\right)
				=\left(
				\begin{array}{c}
					\widetilde{f}_0\\
					\widetilde{f}_1
				\end{array}
				\right)\\
				\left.\left(
				\begin{array}{c}
					\xi^{1}\\
					W
				\end{array}\right)\right|_{t=0}=
				\left(
				\begin{array}{c}
					\xi_0^{1}\\
					W_0
				\end{array}\right).
			\end{cases}
		\end{equation*}
For the operator $G$, we have the following lemma.
\begin{lem}\label{lem-dissi-G-oper-1}
   $G$ is a dissipative operator in $D(G)$ $\to$ $L^2(\Sigma_0)\times L_{\sigma}^2(\Omega)$.
\end{lem}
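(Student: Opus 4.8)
Recall that dissipativity of $G$ is a statement relative to a fixed inner product on the target space, so I begin by fixing on $L^2(\Sigma_0)\times L^2_{\sigma}(\Omega)$ the energy inner product
\[
\big\langle(\xi^1,W),(\zeta^1,V)\big\rangle_{\ast}:=g\int_{\Sigma_0}\xi^1\,\zeta^1\,dx_h+\int_{\Omega}W\cdot V\,dx ,
\]
which is equivalent to the standard one because $g>0$. The plan is then to verify $\mathrm{Re}\,\langle G(\xi^1,W)^T,(\xi^1,W)^T\rangle_{\ast}\le 0$ for every $(\xi^1,W)^T\in D(G)$. By density it suffices to establish the resulting identity for smooth divergence-free $W$ that vanish on $\Sigma_b$ and satisfy $(\partial_1W^{\alpha}+\partial_{\alpha}W^1)|_{\Sigma_0}=0$ ($\alpha=2,3$), the general case following by approximation together with the elliptic bounds for $\pi_1,\pi_2$ coming from \eqref{EB-multi-3.187}.

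Since $G(\xi^1,W)^T=(W^1|_{\Sigma_0},\,A_1\xi^1+A_2W)^T$ with $A_1\xi^1=-\nabla\pi_1$ and $A_2W=\nu\mathbb{P}\nabla\cdot\mathbb{D}(W)-\nabla\pi_2$, I would write
\[
\big\langle G(\xi^1,W),(\xi^1,W)\big\rangle_{\ast}=g\int_{\Sigma_0}W^1\xi^1\,dx_h-\int_{\Omega}\nabla\pi_1\cdot W\,dx+\nu\int_{\Omega}\big(\mathbb{P}\nabla\cdot\mathbb{D}(W)\big)\cdot W\,dx-\int_{\Omega}\nabla\pi_2\cdot W\,dx ,
\]
and handle the two pressure terms by integration by parts. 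Using $\nabla\cdot W=0$ in $\Omega$, $W\cdot n_0=0$ on $\Sigma_b$, and $n_0=e_1$ on $\Sigma_0$, one gets $\int_{\Omega}\nabla\pi_i\cdot W\,dx=\int_{\Sigma_0}\pi_i\,W^1\,dx_h$ for $i=1,2$. Because $\pi_1=g\xi^1$ on $\Sigma_0$, the $\pi_1$ term cancels the coupling term $g\int_{\Sigma_0}W^1\xi^1\,dx_h$ exactly — this cancellation is precisely why the $g$-weight in $\langle\cdot,\cdot\rangle_{\ast}$ is the correct choice (i.e.\ $A_1$ is skew with respect to the $\Sigma_0$-coupling).

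For the viscous term, I would use that $\mathbb{P}$ is self-adjoint on $L^2(\Omega)$ with $\mathbb{P}W=W$, then integrate by parts with $W|_{\Sigma_b}=0$ and $\mathbb{D}(W):\nabla W=\tfrac12|\mathbb{D}(W)|^2$, to obtain
\[
\nu\int_{\Omega}\big(\mathbb{P}\nabla\cdot\mathbb{D}(W)\big)\cdot W\,dx=-\frac{\nu}{2}\|\mathbb{D}(W)\|_{L^2(\Omega)}^2+\nu\int_{\Sigma_0}\big(\mathbb{D}(W)e_1\big)\cdot W\,dx_h .
\]
On $\Sigma_0$ the boundary conditions in $D(G)$ make the off-diagonal entries of $\mathbb{D}(W)e_1$ vanish, so $(\mathbb{D}(W)e_1)\cdot W=2(\partial_1W^1)W^1$ there; and $\nabla\cdot W=0$ gives $2\partial_1W^1=-2(\partial_2W^2+\partial_3W^3)=\pi_2/\nu$ on $\Sigma_0$ by the definition of $\pi_2$ in \eqref{EB-multi-3.187}. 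Hence the free-surface contribution equals $\int_{\Sigma_0}\pi_2W^1\,dx_h$, which cancels $-\int_{\Omega}\nabla\pi_2\cdot W\,dx=-\int_{\Sigma_0}\pi_2W^1\,dx_h$. Collecting everything yields
\[
\big\langle G(\xi^1,W),(\xi^1,W)\big\rangle_{\ast}=-\frac{\nu}{2}\|\mathbb{D}(W)\|_{L^2(\Omega)}^2\le 0 ,
\]
which is the asserted dissipativity.

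The argument is elementary and I do not expect a genuine obstacle: every boundary contribution on $\Sigma_0$ cancels in pairs and only the negative Korn-type dissipation $-\tfrac{\nu}{2}\|\mathbb{D}(W)\|_{L^2(\Omega)}^2$ survives. The two points that require care are the choice of the equivalent energy inner product (so that the gravity operator $A_1$ enters skew-symmetrically through the $\Sigma_0$-coupling) and the use of $\nabla\cdot W=0$ on $\Sigma_0$ to recognise $2\partial_1W^1$ as exactly $\pi_2/\nu$; the density/regularity justification of the integrations by parts for general elements of $D(G)$ is routine once $\pi_1,\pi_2$ are controlled via \eqref{EB-multi-3.187}.
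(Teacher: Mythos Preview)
Your proof is correct and follows essentially the same route as the paper: both compute the pairing of $G(\xi^1,W)^T$ with $(\xi^1,W)^T$, integrate the pressure and viscous terms by parts, and use the boundary conditions in $D(G)$ together with $\pi_2|_{\Sigma_0}=-2\nu\nabla_h\cdot W^h=2\nu\partial_1W^1$ to cancel the free-surface contributions, leaving only $-\tfrac{\nu}{2}\|\mathbb{D}(W)\|_{L^2(\Omega)}^2$. The paper packages this as the resolvent estimate $\|(\lambda-G)u\|\ge\lambda\|u\|$, while you verify $\mathrm{Re}\,\langle Gu,u\rangle_\ast\le 0$ directly; these are of course equivalent.

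One substantive difference worth noting: you work with the $g$-weighted inner product $\langle\cdot,\cdot\rangle_\ast$, whereas the paper uses the standard product on $L^2(\Sigma_0)\times L^2_\sigma(\Omega)$. Your choice is the right one, since the cancellation between the coupling term $(\xi^1,W^1)_{L^2(\Sigma_0)}$ and $(\nabla\pi_1,W)_{L^2(\Omega)}=g(\xi^1,W^1)_{L^2(\Sigma_0)}$ is exact only with the $g$-weight; with the unweighted product a residual $(g-1)(\xi^1,W^1)_{L^2(\Sigma_0)}$ would remain. So your version is in fact the cleaner statement of the same argument.
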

\begin{proof} For any $\lambda>0$, we have
\begin{equation*}
\begin{split}
    &\|(\lambda- G)(\xi^1,W)^T\|^2_{L^2(\Sigma_0)\times L^2(\Omega)}
    =((\lambda- G) (\xi^1,W)^T,(\lambda- G)(\xi^1,W)^T)_{L^2(\Sigma_0 \times  \Omega)}\\
   & =\lambda^2\| (\xi^1,W)^T\|^2_{L^2(\Sigma_0)\times L^2(\Omega)}
       +\|  G(\xi^1,W)^T\|^2_{L^2(\Sigma_0)\times L^2(\Omega)}
       -( \lambda (\xi^1,W)^T, G (\xi^1,W)^T)_{L^2(\Sigma_0 \times  \Omega)}.
\end{split}
\end{equation*}
Notice that
\begin{equation*}
\begin{split}
-( \lambda (\xi^1,W)^T, G (\xi^1,W)^T&)_{L^2(\Sigma_0 \times  \Omega)}
     = -\lambda(\xi^1,W^1)_{L^2(\Sigma_0)}-\lambda(A_1\xi^1+A_2 W,W)_{L^2(\Omega)}\\
     =&-\lambda(\xi^1,W^1)_{L^2(\Sigma_0)}-\lambda(-\nabla\pi_1+\nu\mathbb{P}\nabla\cdot\mathbb{D}(W)
     -\nabla\pi_2,W)_{L^2(\Omega)}\\
     =& -\lambda(\nu \nabla\cdot\mathbb{D}(W)
     -\nabla\pi_2,W)_{L^2(\Omega)} =\frac{\lambda}{2}\|\mathbb{D}W\|^2_{L^2(\Omega)},
\end{split}
\end{equation*}
then we have
\begin{equation*}
\begin{split}
     \|(\lambda- G)(\xi^1,W)^T\|^2_{L^2(\Sigma_0)\times L^2(\Omega)}
    \geq \lambda^2\| (\xi^1,W)^T\|^2_{L^2(\Sigma_0)\times L^2(\Omega)},
\end{split}
\end{equation*}
which finishes the proof of Lemma \ref{lem-dissi-G-oper-1}.
\end{proof}
Let's now prove Proposition \ref{linear-system}.
   \begin{proof}[Proof of Proposition \ref{linear-system}]
Thanks to Lemma \ref{lem-dissi-G-oper-1}, we know that $G$ is a dissipative operator, and the right half plane belongs to the resolvent set, and $G$ has a closed extension, still denoted by $G$, which generates a contraction semigroup $e^{tG}$. Hence, similar to  Lemma 2.2.1 in Chapter V.2 of \cite{Sohr2001}, we obtain there is a weak solution $(\xi^1,W)$ of \eqref{EB-multi-3.188} in $L^2_{loc}((0,T); L^2(\Sigma_0))\times (L^{\infty}_{loc}((0,T); L_{\sigma}^2(\Omega))\cap L^{2}_{loc}((0,T); { _{0}{H}}_{\sigma}^1(\Omega)))$. Therefore, in order to complete the proof of Proposition \ref{linear-system}, we need to give necessary {\it a priori} energy estimate \eqref{unif-est-linear-1}.
		
	In fact, by using the energy method, we can deduce from \eqref{EB-multi-3.188} that
		\begin{equation*}
			\begin{split}
				&\frac{1}{2}\frac{\mathrm{d}}{\mathrm{d}t}\left( \|\Lambda_{h}^{s}W\|^2_{L^{2}(\Omega)}+g\|\xi^1\|^2_{H^s(\Sigma_0)} \right)+\nu\|\Lambda_h^{s}\mathbb{D}(W)\|^2_{L^2(\Omega)}\\
				&=(\Lambda_{h}^{s-1}\widetilde{f}_1,\Lambda_{h}^{s+1}W)_{L^2(\Omega)}+(\Lambda_{h}^{s}\widetilde{f}_0, g\Lambda_{h}^{s}\xi^1)_{L^2(\Sigma_0)}.
			\end{split}
		\end{equation*}
		Thanks to H$\ddot{\mathrm{o}}$lder's inequality  and Lemma \ref{Korn's_inequality}, we have
		\begin{equation}\label{Step-3-1}
			\begin{split}
				&\frac{\mathrm{d}}{\mathrm{d}t}\left( \|\Lambda_{h}^{s}W\|^2_{L^{2}(\Omega)}+g\|\xi^1\|^2_{H^s(\Sigma_0)} \right)+\frac{3\nu}{2}\|\Lambda_h^{s}\mathbb{D}(W)\|^2_{L^2(\Omega)}\\
				&\leq\frac{2}{\nu}\|\Lambda_{h}^{s-1}\widetilde{f}_1\|^2_{L^2(\Omega)}
				+g\|\widetilde{f}_0\|_{H^s(\Sigma_0)}\|\xi^1\|_{H^s(\Sigma_0)}.
			\end{split}
		\end{equation}

		On the other hand, in order to avoid the loss of regularity on the boundary, similar to the proof of Theorem 1.3 in \cite{Gui2020}, we introduce new unknowns as follows:
		\begin{equation}\label{good_unknows_1}
			\begin{cases}
				\mathcal{G}_Q:=Q-g\mathcal{H}(\xi^1)+2\nu\nabla_h\cdot W^h,\\
				\mathcal{G}_W:=
				\left(
				\begin{array}{c}
					0 \\
					\mathcal{G}_W^2\\
					\mathcal{G}_W^3
				\end{array}\right)
				:=
				\left(
				\begin{array}{c}
					0\\
					\partial_1W^2+\partial_2 W^1\\
					\partial_1W^3+\partial_3 W^1
				\end{array}\right).
			\end{cases}
		\end{equation}
		Then,
		one can deduce that
		\begin{equation}\label{Eng-partial_1}
			\|\Lambda_h^{s-1}W_t\|^2_{L^2(\Omega)} +I+II
			= (\Lambda_h^{s-1}\widetilde{f}_1,\Lambda_h^{s-1}W_t)_{L^2(\Omega)} .
		\end{equation}
		where $I:=\int_{\Omega}\Lambda_h^{s-1}W_t\cdot \Lambda_h^{s-1}\nabla Q\mathrm{d}x$, and
		$II:=\nu\int_{\Omega}\Lambda_h^{s-1}W_t\cdot(\nabla\cdot\mathbb{D}(\Lambda_h^{s-1}W))\mathrm{d}x$.
		Using \eqref{good_unknows_1}$_1$,  the integral $I$ can be split into
		three parts
		\begin{equation*}
			\begin{split}
				I=&\int_{\Omega}\Lambda_h^{s-1}W_t\cdot \Lambda_h^{s-1}\nabla \mathcal{G}_Q\mathrm{d}x
				+\int_{\Omega}\Lambda_h^{s-1}W_t\cdot g\Lambda_h^{s-1}\nabla\mathcal{H}(\xi^1)\mathrm{d}x\\
				&-2\nu\int_{\Omega}\Lambda_h^{s-1}W_t\cdot \Lambda_h^{s-1}
				\nabla(\nabla_h\cdot W^h)\mathrm{d}x.
			\end{split}
		\end{equation*}
		Due to $\nabla\cdot W=0$, one can show that
		\begin{equation*}
			\begin{split}
				I=& \frac{\mathrm{d}}{\mathrm{d}t}\int_{\Omega}\Lambda_h^{s-1}W\cdot g\Lambda_h^{s-1}\nabla\mathcal{H}(\xi^1)\mathrm{d}x
				-\int_{\Omega}\Lambda_h^{s-1}W\cdot g\Lambda_h^{s-1}\nabla\mathcal{H}(W^1+\tilde{\eta})\mathrm{d}x\\
				&+\nu \frac{\mathrm{d}}{\mathrm{d}t}\|\Lambda_h^{s-1} \partial_1W^1\|^2_{L^2(\Omega)}
				-2\nu\int_{\Omega}\Lambda_h^{s-1}W^1_t\cdot \Lambda_h^{s-1}
				\partial_1(\nabla_h\cdot W^h)\mathrm{d}x.
			\end{split}
		\end{equation*}
		For $II$,  thanks to the new unknowns $\mathcal{G}_W$ defined by \eqref{good_unknows_1}$_2$ and $\nabla\cdot W=0$, we obtain that
		\begin{equation*}
			\begin{split}
				II=&-\nu\int_{\Omega}\Lambda_h^{s-1}W_t\cdot \Lambda_h^{s-1}\partial_1^2W \mathrm{d}x
				+\frac{\nu}{2}\frac{\mathrm{d}}{\mathrm{d}t}\|\Lambda_h^{s-1}\nabla_hW\|^2_{L^2(\Omega)}\\
				=& - \nu\int_{\Omega}\Lambda_h^{s-1} W_t\cdot \Lambda_h^{s-1}\partial_1  \mathcal{G}_W  \mathrm{d}x
				+\nu\int_{\Omega}\Lambda_h^{s-1}W^1_t\cdot \Lambda_h^{s-1}\partial_1 (\nabla_h\cdot W^h) \mathrm{d}x\\
				%&+\nu\frac{\mathrm{d}}{\mathrm{d}t}\int_{\Omega}\Lambda_h^{N-1}\partial_2W^2 \cdot \Lambda_h^{N-1}   \partial_3W^3 \mathrm{d}x\\
				%\int_{\Omega}\Lambda_h^{N-1}W^2_t\cdot \Lambda_h^{N-1}  \partial_2 (\nabla_h\cdot W^h) \mathrm{d}x
				%+\nu\int_{\Omega}\Lambda_h^{N-1}W^3_t\cdot \Lambda_h^{N-1}  \partial_3 (\nabla_h\cdot W^h) \mathrm{d}x\\
				&+\frac{\nu}{2}\frac{\mathrm{d}}{\mathrm{d}t}\left(
				\|\Lambda_h^{s-1}\partial_1W^1\|^2_{L^2(\Omega)}
				+ \|\Lambda_h^{s-1}\nabla_hW\|^2_{L^2(\Omega)}\right).
			\end{split}
		\end{equation*}
		In view of the boundary conditions $W|_{\Sigma_b}=0$ and $\mathcal{G}_W|_{\Sigma_0} = 0$, one can deduce that
		\begin{equation*}
			\begin{split}
				&-\nu\int_{\Omega}\Lambda_h^{s-1}   W_t\cdot \Lambda_h^{s-1} \partial_1\mathcal{G}_W  \mathrm{d}x
				=\frac{\nu}{2}\frac{\mathrm{d}}{\mathrm{d}t}\|\Lambda_h^{s-1} \partial_1 W^h\|^2_{L^2{\Omega}}\\
				&\quad\quad\quad\quad+\nu\frac{\mathrm{d}}{\mathrm{d}t}\int_{\Omega}\Lambda_h^{s-1} \partial_1 W^2 \cdot \Lambda_h^{s-1}\partial_2 W^1 \mathrm{d}x
				+\nu \int_{\Omega}\Lambda_h^{s-1} \partial_2\partial_1 W^2 \cdot \Lambda_h^{s-1} W_t^1 \mathrm{d}x\\
				&\quad\quad\quad\quad+ \nu\frac{\mathrm{d}}{\mathrm{d}t}\int_{\Omega}\Lambda_h^{s-1} \partial_1 W^3 \cdot \Lambda_h^{s-1}\partial_3 W^1 \mathrm{d}x
				+\nu \int_{\Omega}\Lambda_h^{s-1} \partial_3\partial_1 W^3 \cdot \Lambda_h^{s-1} W_t^1 \mathrm{d}x .
			\end{split}
		\end{equation*}
		Define
		\begin{equation}\label{Eng-partial_2}
			\begin{split}
				\tilde{\mathcal{E}}_{s-1}:=& \|\Lambda_h^{s-1}\nabla W\|^2_{L^2(\Omega)}
				+ 2 \|\Lambda_h^{s-1}\partial_1W^1\|^2_{L^2(\Omega)}
				+\frac{2}{\nu}\int_{\Omega}\Lambda_h^{s-1}W\cdot g\Lambda_h^{s-1}\nabla\mathcal{H}(\xi^1)\mathrm{d}x\\
				&+2\int_{\Omega}\Lambda_h^{s-1} \partial_1 W^h \cdot \Lambda_h^{s-1}\nabla_h W^1 \mathrm{d}x,
			\end{split}
		\end{equation}
		combining $I$, $II$ with \eqref{Eng-partial_1} leads to
		\begin{equation*}
			\begin{split}
				\frac{\nu}{2}\frac{\mathrm{d}}{\mathrm{d}t} \tilde{\mathcal{E}}_{s-1}+
				\|\Lambda_h^{s-1}W_t\|^2_{L^2(\Omega)}
				=&(\Lambda_h^{s-1}\widetilde{f}_1,\Lambda_h^{s-1}W_t)_{L^2(\Omega)}+ (g\Lambda_h^{s-1}\nabla\mathcal{H}(W^1+\widetilde{f}_0),\Lambda_h^{s-1}W )_{L^2(\Omega)}.
			\end{split}
		\end{equation*}
		Then we have
		\begin{equation}\label{Step-3-2}
			\begin{split}
				& \nu \frac{\mathrm{d}}{\mathrm{d}t} \tilde{\mathcal{E}}_{s-1}+
				\|\Lambda_h^{s-1}W_t\|^2_{L^2(\Omega)}\\
				&\leq \|\Lambda_h^{s-1}\widetilde{f}_1\|^2_{L^2(\Omega)}+ 2g\left(\|\Lambda_h^{s-1}\nabla W\|_{L^2(\Omega)} +\|\widetilde{f}_0\|_{H^{s-\frac{1}{2}}(\Sigma_0)}\right)\|\Lambda_h^{s-1}W \|_{L^2(\Omega)},
			\end{split}
		\end{equation}
		due to  H\"{o}lder's inequality and Lemma \ref{Korn's_inequality}.
		
		Now, let's consider the estimates of $\|\Lambda_h^{s-1}\partial_1^2 W\|_{L^2(L^2(\Omega))}$. Since there are different boundary conditions on the upper boundary $\Sigma_0$ and the bottom boundary $\Sigma_b$, we need to deal with these two different situations separately.
		
		Define $W_b:=W\chi_b$, $Q_b:=Q\chi_b$ where $\chi_b$ defined by \eqref{cutfunction-B}, then according to \eqref{EB-multi-3.16}, $(W_b, Q_b)$ solves
		\begin{equation*}
			\begin{cases}
				-\nu\nabla\cdot\mathbb{D}(W_b)+\nabla Q_b=-\chi_b W_t+\widetilde{f}_b,\\
				\nabla \cdot W_b=\chi'_bW^1_b,\\
				(Q_b\mathbb{I}-\nu\mathbb{D}(W_b))n_0|_{\Sigma_0}=0,\\
				W_b|_{\Sigma_b}=0,
			\end{cases}
		\end{equation*}
		where
		\begin{equation*}
			\widetilde{f}_b=\widetilde{f}_1\chi_b-\nu[\nabla\cdot\mathbb{D};\chi_b]W+[\nabla;\chi_b]Q.
		\end{equation*}
		According to Lemma \eqref{lem-3-5},  we obtain the estimates near the bottom boundary as follow,
		\begin{equation}\label{Step-3-3.1}
			\begin{split}
				\|\Lambda_h^{s-1}\nabla W\|^2_{H^1(\Omega_b)}+ \|\Lambda_h^{s-1} \nabla Q\|^2_{L^2(\Omega_b)}
				\leq& \|\Lambda_h^{s-1}\widetilde{f}_1\|^2_{L^2(\Omega_b)}
				+\|\Lambda_h^{s-1}W_t\|^2_{L^2(\Omega_b)}\\
				&+\|\Lambda_h^{s-1}W\|_{H^1(\Omega)}
				+\|\Lambda_h^{s-1}Q\|^2_{L^2(\Omega_f)}.
			\end{split}
		\end{equation}
		Next, we need to consider the estimates near the upper boundary. Consider the equation of $\partial_1^2 W^h=(\partial_1^2 W^2,\partial_1^2 W^3)^T$
		\begin{equation*}
			-\nu\partial_1^2W^h+\nabla_hQ=-W^h_t+\nu\Delta_h W^h+\widetilde{f}_1^h.
		\end{equation*}
		By using the energy method, one can deduce that
		\begin{equation*}\label{Stokes-eeee}
			\begin{split}
				\nu\|\Lambda_h^{s-1}\partial_1^2W^h\|^2_{L^2(\Omega_f)}+IV=
				\int_{\Omega}\Lambda_h^{s-1} ( W^h_t-\nu\Delta_h W^h-\widetilde{f}_1^h )\cdot\Lambda_h^{s-1}\partial_1^2W^h \chi_f\mathrm{d}x,
			\end{split}
		\end{equation*}
		where
		\begin{equation*}
			\begin{split}
				IV=&-\int_{\Omega}\Lambda_h^{s-1}\nabla_hQ \cdot \Lambda_h^{s-1}\partial_1^2W^h \chi_f\mathrm{d}x\\
				=&-\int_{\Omega}\Lambda_h^{s-1}\partial_1Q \cdot \Lambda_h^{s-1}\partial_1 (\nabla_h\cdot W^h) \chi_f\mathrm{d}x
				-\int_{\Omega}\Lambda_h^{s-1}\partial_1Q \cdot \Lambda_h^{s-1}\partial_1 (\nabla_h\cdot W^h) \chi'_f\mathrm{d}x\\
				&-\int_{\Sigma_0}\Lambda_h^{s-1} \nabla_hQ \cdot \Lambda_h^{s-1}\partial_1 W^h  \mathrm{d}x
			\end{split}
		\end{equation*}
		Thanks to the boundary conditions of $\nabla_hW^1$ and $Q$ on $\Sigma_0$, we obtain
		\begin{equation*}
			\begin{split}
				-\int_{\Sigma_0}\Lambda_h^{s-1} \nabla_hQ \cdot &\Lambda_h^{s-1}\partial_1 W^h  \mathrm{d}x=\int_{\Sigma_0}\Lambda_h^{s-1} \nabla_h(g\xi^1-2\nu\nabla_h\cdot W^h)\cdot \Lambda_h^{s-1}\nabla_h W^1  \mathrm{d}x\\
				=&\frac{g}{2}\frac{\mathrm{d}}{\mathrm{d}t}\|\Lambda_h^{s-1}\nabla_h \xi^1\|^2_{L^{2}(\Sigma_0)}
				-\int_{\Sigma_0}\Lambda_h^{s-1} \nabla_h\xi^1\cdot \Lambda_h^{s-1}\nabla_h \tilde{\eta}  \mathrm{d}x\\
				&+2\nu\int_{\Sigma_0}\Lambda_h^{s-1} ( \nabla_h\cdot W^h)\cdot \Lambda_h^{s-1}\Delta_h W^1  \mathrm{d}x.
			\end{split}
		\end{equation*}
		Hence, we have
		\begin{equation*}
			\begin{split}
				&\nu\|\Lambda_h^{s-1}\partial_1^2W^h\|^2_{L^2(\Omega_f)}
				+\frac{g}{2}\frac{\mathrm{d}}{\mathrm{d}t}\|\Lambda_h^{s-1}\nabla_h \xi^1\|^2_{L^{2}(\Sigma_0)}\\
				&=\int_{\Omega}\Lambda_h^{s-1} ( W^h_t-\nu\Delta_h W^h-\widetilde{f}_1^h )\cdot\Lambda_h^{s-1}\partial_1^2W^h \chi_f\mathrm{d}x\\
				&\quad+\int_{\Omega}\Lambda_h^{s-1}\partial_1Q \cdot \Lambda_h^{s-1}\partial_1(\nabla_h\cdot W^h) \chi_f\mathrm{d}x
				+\int_{\Omega}\Lambda_h^{s-1}\partial_1Q \cdot \Lambda_h^{s-1}\partial_1(\nabla_h\cdot W^h) \chi'_f\mathrm{d}x\\
				&\quad+\int_{\Sigma_0}g\Lambda_h^{s-1} \nabla_h\xi^1\cdot \Lambda_h^{s-1}\nabla_h \widetilde{f}_0  \mathrm{d}x
				-2\nu\int_{\Sigma_0}\Lambda_h^{s-\frac{1}{2}} ( \nabla_h\cdot W^h)\cdot \Lambda_h^{s-\frac{3}{2}}\Delta_h W^1  \mathrm{d}x.
			\end{split}
		\end{equation*}
		Since $\partial_1Q=\tilde{f}^1+\Delta_hW^1-\partial_1(\nabla_h\cdot W^h)-W_t$, one has
		\begin{equation}\label{Step-3-3.2}
			\begin{split}
				&\frac{\nu}{2}\|\Lambda_h^{s-1}\partial_1^2W^h\|^2_{L^2(\Omega_f)}
				+\frac{g}{2}\frac{\mathrm{d}}{\mathrm{d}t}\|\Lambda_h^{s-1}\nabla_h \xi^1\|^2_{L^{2}(\Sigma_0)}\\
				&\leq C\left(\|\Lambda_h^{s-1}  W^h_t\|^2_{L^2(\Omega_f)}+
				\| \Lambda_h^{s}\nabla W^h \|_{L^2(\Omega)}
				+\|\Lambda_h^{s-1}\widetilde{f}_1\|^2_{L^2(\Omega_f)}  +\|\nabla_h\xi^1\|_{H^{s-1} }\| \nabla_h \widetilde{f}_0\|_{H^{s-1} }\right).
			\end{split}
		\end{equation}
		Moreover, we have the estimates of $\nabla Q$ as follow,
		\begin{equation}\label{Step-3-3.3}
			\begin{split}
				&\|\Lambda_h^{s-1}\partial_1Q\|^2_{L^2(\Omega)}\leq \|\Lambda_h^{s-1}  (W^h_t, \widetilde{f}_1)\|^2_{L^2(\Omega)}+
				\| \Lambda_h^{s}\nabla W^h \|^2_{L^2(\Omega)} , \\
				&\|\Lambda_h^{s-1}(\nabla_hQ-\nu \partial_1^2 W^h)\|^2_{L^2(\Omega_f)}\leq \|\Lambda_h^{s-1}  (W^h_t,\widetilde{f}_1)\|^2_{L^2(\Omega_f)}+
				\| \Lambda_h^{s+1} W^h \|^2_{L^2(\Omega_f)} , \\
				&\|\Lambda_h^{s-1}(\nabla_hQ-\nu\partial_1^2 W^h)\|^2_{L^2(\Omega )}\leq \|\Lambda_h^{s-1} ( W^h_t,\widetilde{f}_1)\|^2_{L^2(\Omega )}+
				\| \Lambda_h^{s+1} W^h \|^2_{L^2(\Omega )} .
			\end{split}
		\end{equation}
		Thanks to Lemma \ref{lem-korn-2}, we may get
		\begin{equation}\label{Step-3-3.4}
			\begin{split}
				\|\Lambda_h^{s-1}Q\|^2_{L^2(\Omega)}
				&\lesssim \|\Lambda_h^{s-1}Q\|^2_{L^2(\Sigma_0)}+\|\Lambda_h^{s-1}\partial_1Q\|^2_{L^2(\Omega)}  \\
				&\leq \| \xi^1\|^2_{H^s(\Sigma_0)}+
				\|\Lambda_h^{s-1}  (W^h_t, \widetilde{f}_1)\|^2_{L^2(\Omega)}
				+\|\Lambda_h^{s-1}\nabla_hW^h\|^2_{H^1(\Omega)}.
			\end{split}
		\end{equation}
		Then combining \eqref{Step-3-3.1}, \eqref{Step-3-3.3}, \eqref{Step-3-3.4} with \eqref{Step-3-3.2}, we may get
		\begin{equation}\label{Step-3-3}
			\begin{split}
				&c_0\|\Lambda_h^{s-1}(\partial_1^2W^h,Q,\nabla Q)\|^2_{L^2(\Omega)}
				+ \frac{\mathrm{d}}{\mathrm{d}t}\|\Lambda_h^{s-1}\nabla_h \xi^1\|^2_{L^{2}(\Sigma_0)}\\
				&\leq C_0\left(\|\Lambda_h^{s-1}  W^h_t\|^2_{L^2(\Omega )}+
				\| \Lambda_h^{s}\nabla W^h \|_{L^2(\Omega)}
				+\|\Lambda_h^{s-1}\widetilde{f}_1\|^2_{L^2(\Omega )}  +\| \xi^1\|_{H^{s} }+\|\widetilde{f}_0\|^2_{H^{s} }\right),
			\end{split}
		\end{equation}
		for some suitably positive constant $c_0$ and $C_0$.
		
		Finally, we will show the total energy estimates. For any given small positive constant $\kappa$, we set
		\begin{equation*}
			\begin{split}
				&\bar{\mathcal{E}}_{s-1}:=\|\Lambda_h^{s}W\|^2_{L^2(\Omega)}
				+ \|\xi^1\|_{H^s(\Sigma_0)}+\kappa\nu \tilde{\mathcal{E}}_{s-1}+\kappa^2 \|\nabla_h\xi^1\|_{H^{s -1}(\Sigma_0)},\\
				&\bar{\mathcal{D}}_{s-1}:=\frac{3\nu}{2}\|\Lambda_h^{s}\mathbb{D}(W)\|^2_{L^2(\Omega)}
				+\kappa  \|\Lambda_h^{s-1}W_t\|^2_{L^2(\Omega)}+\kappa^2c_0\|\Lambda_h^{s-1}(\partial_1^2W^h,Q,\nabla Q)\|^2_{L^2(\Omega)},
			\end{split}
		\end{equation*}
		From this, we find that there is a small enough $\kappa > 0$ such that the following inequalities hold
		\begin{equation}\label{CEEEEEE}
			\begin{split}
				&\bar{\mathcal{E}}_{s-1} \geq c_1(\|\Lambda_h^{s-1}W\|^2_{H^1(\Omega)}
				+ \|\xi^1\|_{H^s(\Sigma_0)}),\\
				&\bar{\mathcal{D}}_{s-1}\geq c_1( \|\Lambda_h^{s-1}\nabla W\|^2_{H^1(\Omega)}
				+\|\Lambda_h^{s-1}W_t\|^2_{L^2(\Omega)}+ \|\Lambda_h^{s-1} Q \|^2_{H^1(\Omega)}),
			\end{split}
		\end{equation}
		for some suitably small positive constant $c_1$.
		
		Combining \eqref{CEEEEEE} with \eqref{Step-3-1},\eqref{Step-3-2},\eqref{Step-3-3}, we obtain
		\begin{equation*}
			\begin{split}
				&c_1\frac{\mathrm{d}}{\mathrm{d}t}\left( \|\Lambda_{h}^{s-1}W\|^2_{H^{1}(\Omega)}+\|\xi^1\|^2_{H^s(\Sigma_0)} \right)+ c_1 \left(\|\Lambda_h^{s-1}(\nabla W,Q )\|^2_{H^1(\Omega)}
				+\|\Lambda_h^{s-1}W_t\|^2_{L^2(\Omega)} \right)\\
				&\leq\frac{2}{\nu}\|\Lambda_{h}^{s-1}\widetilde{f}_1\|^2_{L^2(\Omega)}
				+\|\widetilde{f}_0\|_{H^s(\Sigma_0)}\|\xi^1\|_{H^s(\Sigma_0)}\\
				&\quad+\kappa\left(\|\Lambda_h^{s-1}\widetilde{f}_1\|^2_{L^2(\Omega)}+ 4C_{korn}\|\Lambda_h^{s-1}\nabla W\|^2_{L^2(\Omega)} +\|\tilde{\eta}\|^2_{H^{s-\frac{1}{2}}(\Sigma_0)} \right)\\
				&\quad+\kappa^2C_0\left(\|\Lambda_h^{s-1}  W^h_t\|^2_{L^2(\Omega )}+
				\| \Lambda_h^{s}\nabla W^h \|_{L^2(\Omega)}^2
				+\|\Lambda_h^{s-1}\widetilde{f}_1\|^2_{L^2(\Omega )} +\| \xi^1\|_{H^{s} }^2+\| \widetilde{f}_0\|^2_{H^{s} }\right).
			\end{split}
		\end{equation*}
		Let $\kappa\leq\min\{1,\frac{c_0}{16C_{korn}},\sqrt{\frac{c_0}{4C_0}}\}$, then we have
		\begin{equation}\label{enr-est-diff-1}
			\begin{split}
				&c_1\frac{\mathrm{d}}{\mathrm{d}t}\left( \|\Lambda_{h}^{s-1}W\|^2_{H^{1}(\Omega)}+\|\xi^1\|^2_{H^s(\Sigma_0)} \right)+ \frac{c_1}{2} \left(\|\Lambda_h^{s-1}(\nabla W,Q )\|^2_{H^1(\Omega)}
				+\|\Lambda_h^{s-1}W_t\|^2_{L^2(\Omega)} \right)\\
				&\leq C  \|\Lambda_{h}^{s-1}\widetilde{f}_1\|^2_{L^2(\Omega)}
				+\|\widetilde{f}_0\|_{H^s(\Sigma_0)} \|\xi^1\|_{H^s(\Sigma_0)}
				+2\kappa \|\widetilde{f}_0\|^2_{H^{s}(\Sigma_0)}  +\kappa^2c'_0  \|\xi^1\|^2_{H^{s}(\Sigma_0)}.
			\end{split}
		\end{equation}
		Due to $\partial_t \xi^1-W^1=\widetilde{f}_0$ on $ [0,T]\times\Sigma_0$, one can see that $\forall\,\,t \in [0, T]$
		\begin{equation*}
			\begin{split}
				\sup_{\tau \in [0, t]}\| \xi^1(\tau)\|^2_{H^s(\Sigma_0)}
				\leq t\int_0^t\left(2C_{korn}\| \Lambda_h^{s-1}\nabla W\|^2_{L^2(\Omega)}+ \|\widetilde{f}_0\|^2_{H^s(\Sigma_0)}\right)\mathrm{d}\tau+ \|\xi^1_0\|_{H^s(\Sigma_0)}^2,
			\end{split}
		\end{equation*}
which along with \eqref{enr-est-diff-1} and Young's inequality implies
		\begin{equation*}
			\begin{split}
				c_1\frac{\mathrm{d}}{\mathrm{d}t}&\left( \|\Lambda_{h}^{s-1}W\|^2_{H^{1}(\Omega)}+\|\xi^1\|^2_{H^s(\Sigma_0)} \right)+ \frac{c_1}{2} \left(\|\Lambda_h^{s-1}(\nabla W,Q )\|^2_{H^1(\Omega)}
				+\|\Lambda_h^{s-1}W_t\|^2_{L^2(\Omega)} \right)\\
				\leq&C  \left(\|\Lambda_{h}^{s-1}\widetilde{f}_1\|^2_{L^2(\Omega)}
				+\|\widetilde{f}_0\|^2_{H^s(\Sigma_0)}\right)
				+(\frac{c_1}{8}+\kappa^2C_0)t\int_0^t\| \Lambda_h^{s-1}\nabla W\|^2_{L^2(\Omega)}\mathrm{d}\tau\\
				&+ (1+\kappa^2C_0)\left(t\int_0^t\|\widetilde{f}_0\|^2_{H^s(\Sigma_0)}\mathrm{d}\tau+ \|\xi^1_0\|_{H^s(\Sigma_0)}^2\right)
			\end{split}
		\end{equation*}
	Taking  $T=1$ and $0< \kappa\leq \sqrt{\frac{c_1}{8C_0}}$, we have
		\begin{equation*}
			\begin{split}
				c_1\frac{\mathrm{d}}{\mathrm{d}t}&\left( \|\Lambda_{h}^{s-1}W\|^2_{H^{1}(\Omega)}+\|\xi^1\|^2_{H^s(\Sigma_0)} \right)+ \frac{c_1}{2} \left(\|\Lambda_h^{s-1}(\nabla W,Q )\|^2_{H^1(\Omega)}
				+\|\Lambda_h^{s-1}W_t\|^2_{L^2(\Omega)} \right)\\
				\leq&C  \left(\|\Lambda_{h}^{s-1}\widetilde{f}_1\|^2_{L^2}
				+\|\widetilde{f}_0\|^2_{H^s}+\int_0^t\|\widetilde{f}_0\|^2_{H^s}\mathrm{d}\tau+ \|\xi^1_0\|_{H^s}^2\right)+ \frac{c_1}{4} \int_0^t\| \Lambda_h^{s-1}\nabla W\|^2_{L^2}\mathrm{d}\tau,
			\end{split}
		\end{equation*}
		which leads to \eqref{unif-est-linear-1}. 

We thus complete the proof of Theorem \ref{linear-system}.
	\end{proof}

%%%%%%%%%%%%%%%%%%%%%%%%%%%%%%%%%%%%%%%%%%%%%%
%%%%%%%%%%%%%%%%%%%%%%%%%%%%%%%%%%%%%%%%%%
\renewcommand{\theequation}{\thesection.\arabic{equation}}
\setcounter{equation}{0}
%%%%%%%%%%%%%%%%%%%%%%%%%%%%%%%%%%%%%%%%%%%%%%
%%%%%%%%%%%%%%%%%%%%%%%%%%%%%%%%%%%%%%%%%%

	\section{Proof of Theorem 1.1}\label{sect-4}
    Following the idea from \cite{Gui2020}, we rewrite the system \eqref{EB-multi-1.1} as the following linearized form
   \begin{equation}\label{fixed-point}
		\begin{cases}
			\partial_t\xi-v=0&\quad\text{in }\Omega,\\
			\partial_tv-\nu\nabla\cdot\mathbb{D}(v)+\nabla q= F_1(\nabla\xi,v,q) &\quad\text{in }\Omega,\\
			\nabla\cdot v=F_2(\nabla\xi,v)&\quad\text{in }\Omega,\\
			qn_0-g\xi^1n_0-\nu\mathbb{D}(v)n_0=F_3(\nabla\xi,v)&\quad\text{on }\Sigma_0,\\
			v=0&\quad\text{on }\Sigma_b,\\
			(\xi,v)|_{t=0} = (\xi_0,v_0)&\quad\text{in }\Omega,
		\end{cases}
	\end{equation}
	where
\begin{equation*}
\begin{split}
&F_1(\mathcal{A}(\nabla\xi),v,q):= -\nu(\nabla\cdot\mathbb{D}(v)-\nabla_{\mathcal{A}}\cdot\mathbb{D}_{\mathcal{A}}(v))+\nabla_{\mathbb{I}-\mathcal{A}} q,\\
&F_2(\nabla\xi,v):= {B}^{1}(\nabla\xi):\nabla v,  \quad F_3(\nabla\xi,v): = {B}^{2}(\nabla\xi):\nabla_h v.
\end{split}
\end{equation*}
     and $\mathcal{A}=\mathcal{A}(\nabla\xi)=(\nabla\xi+\mathbb{I})^{-1}$, ${B}^{1}= \mathbb{I}-a_{11}^{-1}J\mathcal{A}$  and $({B}^{2}:\nabla_h v)^j={B}^{2, j}_{\alpha,i}\partial_{\alpha} v^i$ $(i,j=1,2,3,\alpha=2,3)$.
   Here, ${B}^{2}={B}^{2}(\nabla\,\xi)$ is a $3\times 2\times 3$ tensor defined in Appendix (Sect.\ref{sect-appendix-1}).

     Notice that the matrix $\mathcal{A}-\mathbb{I}$ depends only on $\nabla \xi$ from the explicit form $\mathcal{A}=(\nabla\xi+\mathbb{I})^{-1}$, then we can get the following estimates about ${B}^{1}$ and ${B}^{2}$.
     \begin{lem}[Lemma 4.7 in \cite{Gui2020}]\label{estimate-B-B}
       Let  $s>2$ ,and assume $\nabla \xi$ and $\nabla \widetilde{\xi}$ satisfy
       \[
       \sup_{0\leq t\leq T}\big(\|\Lambda_h^{s-1}\nabla \xi\|_{H^1(\Omega)}+\|\Lambda_h^{s-1}\nabla   \widetilde{\xi}\|_{H^1(\Omega)}\big)\leq \epsilon_0,
       \]
        for  some suitably small positive constant $ \epsilon_0  \in (0, 1]$ , then there exists a positive constant $C$ such that
        \begin{equation}\label{prod-ab-est-one1}
        \begin{split}
        &\|\Lambda_h^{s-1}(\mathcal{A}-\mathbb{I}, {B}^{1}, {B}^{2})\|_{H^1(\Omega)}\leq C\|\Lambda_h^{s-1}\nabla \xi\|_{H^1(\Omega)} ,\\
        &\|\Lambda_h^{s-1} {B}^{1}_t \|_{L^2(\Omega)}\leq C(1+\|\Lambda_h^{s-1}\nabla \xi\|_{H^1(\Omega)})\|\Lambda_h^{s-1}\nabla \xi_t\|_{L^2(\Omega)} ,\\
        &\|\Lambda_h^{s-1} {B}^{2}_t \|_{L^2(\Omega)}\leq C\|\Lambda_h^{s-1}\nabla \xi\|_{H^1(\Omega)}\|\Lambda_h^{s-1}\nabla \xi_t\|_{L^2(\Omega)} ,
        \end{split}
      \end{equation}
        and
    \begin{equation}\label{prod-ab-est-one2}
        \begin{split}
        &\|\Lambda_h^{s-1}(\mathcal{A}-\widetilde{\mathcal{A}},B-\widetilde{B}, {B}^{2}-\widetilde{B}^2)\|_{H^1(\Omega)}\leq C \|\Lambda_h^{s-1}(\nabla \xi-\nabla\widetilde{\xi})\|_{H^1(\Omega)},\\
        &\|\Lambda_h^{s-1}(B^1_t-\widetilde{B}^1_t, {B}^{2}_t-\widetilde{B}^2_t)\|_{L^2(\Omega)}\leq C \big(\|\Lambda_h^{s-1}(\nabla \xi_t-\nabla\widetilde{\xi}_t)\|_{L^2(\Omega)}\\
        &\quad\quad\quad\quad\quad\quad\quad+\|\Lambda_h^{s-1}(\nabla \xi_t,\nabla\widetilde{\xi}_t)\|_{L^2(\Omega)}\|\Lambda_h^{s-1}(\nabla \xi-\nabla\widetilde{\xi})\|_{H^1(\Omega)}).
        \end{split}
    \end{equation}
     \end{lem}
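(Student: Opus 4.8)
The plan is to reduce all five bounds to the composition and product estimates of Section~\ref{sect-2}, once the nonlinear quantities are written explicitly in terms of $\nabla\xi$ (and $\nabla\xi_t$ for the time derivatives). First I would record the algebraic structure. Since $\|\Lambda_h^{s-1}\nabla\xi\|_{H^1(\Omega)}\le\epsilon_0$, the two–dimensional Sobolev embedding in $x_h$ together with Lemma~\ref{lem-korn-2} in $x_1$ gives $\Lambda_h^{s-1}H^1(\Omega)\hookrightarrow L^\infty(\Omega)$ (valid because $s-1>1$), so $\|\nabla\xi\|_{L^\infty(\Omega)}$ is small; hence the Neumann series $\mathcal{A}=\sum_{k\ge0}(-\nabla\xi)^k$ converges, $J=\det(\mathbb{I}+\nabla\xi)$ is a polynomial in the entries of $\nabla\xi$ with $J(0)=1$, and $a_{11}=(J\mathcal{A})^1_1$ satisfies $|a_{11}-1|\lesssim\|\nabla\xi\|_{L^\infty}$, so $a_{11}^{-1}$ is real-analytic in $\nabla\xi$ near $0$. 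Consequently each of $\mathcal{A}-\mathbb{I}$, $B^1=\mathbb{I}-a_{11}^{-1}J\mathcal{A}$ and $B^2$ (whose explicit expression is in Appendix~\ref{sect-appendix-1}) is of the form $\Phi(\nabla\xi)$ with $\Phi$ real-analytic near the origin and $\Phi(0)=0$, and $\partial_t(\mathcal{A}-\mathbb{I},\,B^1,\,B^2)=\Psi(\nabla\xi):\nabla\xi_t$ with $\Psi$ real-analytic near $0$. I would also check on the appendix formulas that $\Psi_{B^2}(0)=0$ (every monomial of $B^2$ carries at least two factors vanishing at $\nabla\xi=0$), while $\Psi_{B^1}(0)\neq0$ — using e.g. $\partial_t J|_{\nabla\xi=0}=\nabla\cdot\xi_t$ and $\partial_t\mathcal{A}|_{\nabla\xi=0}=-\nabla\xi_t$ one gets $B^1_t|_{\nabla\xi=0}=\nabla\xi_t-(\partial_1\xi_t^1)\mathbb{I}$; this dichotomy is exactly what makes the prefactor $(1+\|\Lambda_h^{s-1}\nabla\xi\|_{H^1})$ appear for $B^1_t$ and be absent for $B^2_t$.

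Next I would set up the two functional inequalities that carry all the weight. Because $s-1>1$ and the horizontal variable is two–dimensional, the horizontal Bony decomposition of Lemma~\ref{lem-product-law-1}, the Leibniz rule in $x_1$, and the embedding $\Lambda_h^{s-1}H^1(\Omega)\hookrightarrow L^\infty(\Omega)$ yield
\[
\|\Lambda_h^{s-1}(fg)\|_{H^1(\Omega)}\lesssim\|\Lambda_h^{s-1}f\|_{H^1(\Omega)}\|\Lambda_h^{s-1}g\|_{H^1(\Omega)},\qquad \|\Lambda_h^{s-1}(fg)\|_{L^2(\Omega)}\lesssim\|\Lambda_h^{s-1}f\|_{H^1(\Omega)}\|\Lambda_h^{s-1}g\|_{L^2(\Omega)}.
\]
Expanding $\Phi$ in its power series and applying the first inequality term by term, the geometric decay of the Taylor coefficients of $\Phi$ makes the series converge once $\|\Lambda_h^{s-1}\nabla\xi\|_{H^1(\Omega)}\le\epsilon_0$ and gives the anisotropic analogue of Lemma~\ref{lem-composition-1}: $\|\Lambda_h^{s-1}\Phi(\nabla\xi)\|_{H^1(\Omega)}\lesssim C(\|\nabla\xi\|_{L^\infty})\|\Lambda_h^{s-1}\nabla\xi\|_{H^1(\Omega)}$. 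Writing $\Phi(\nabla\xi)-\Phi(\nabla\widetilde{\xi})=\big(\int_0^1\Phi'(\nabla\widetilde{\xi}+\tau(\nabla\xi-\nabla\widetilde{\xi}))\,d\tau\big):(\nabla\xi-\nabla\widetilde{\xi})$ and using the same two inequalities gives the anisotropic analogue of Lemma~\ref{prop-composition-2}.

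Then I would assemble the five estimates. Applying the composition bound to $\Phi\in\{\mathcal{A}-\mathbb{I},\,B^1,\,B^2\}$ gives the first line of \eqref{prod-ab-est-one1}. For $B^1_t$, split $\Psi_{B^1}(\nabla\xi)=\Psi_{B^1}(0)+\widetilde{\Psi}(\nabla\xi)$ with $\widetilde{\Psi}(0)=0$: the constant–coefficient term contributes $\lesssim\|\Lambda_h^{s-1}\nabla\xi_t\|_{L^2}$, and $\|\Lambda_h^{s-1}(\widetilde{\Psi}(\nabla\xi):\nabla\xi_t)\|_{L^2}\lesssim\|\Lambda_h^{s-1}\widetilde{\Psi}(\nabla\xi)\|_{H^1}\|\Lambda_h^{s-1}\nabla\xi_t\|_{L^2}\lesssim\|\Lambda_h^{s-1}\nabla\xi\|_{H^1}\|\Lambda_h^{s-1}\nabla\xi_t\|_{L^2}$, which is the second line; for $B^2_t$ the constant term is absent since $\Psi_{B^2}(0)=0$, giving the third line. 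The difference bounds \eqref{prod-ab-est-one2} follow the same scheme, using the anisotropic Lemma~\ref{prop-composition-2} for the $H^1$ parts and, for the time-derivative differences, the splitting $\Psi(\nabla\xi):\nabla\xi_t-\Psi(\nabla\widetilde{\xi}):\nabla\widetilde{\xi}_t=\Psi(\nabla\xi):(\nabla\xi_t-\nabla\widetilde{\xi}_t)+\big(\Psi(\nabla\xi)-\Psi(\nabla\widetilde{\xi})\big):\nabla\widetilde{\xi}_t$, where the first term is treated as above (the constant part of $\Psi$ produces $\|\Lambda_h^{s-1}(\nabla\xi_t-\nabla\widetilde{\xi}_t)\|_{L^2}$, the remainder is absorbed by $\epsilon_0$) and the second produces the cross term $\|\Lambda_h^{s-1}(\nabla\xi_t,\nabla\widetilde{\xi}_t)\|_{L^2}\,\|\Lambda_h^{s-1}(\nabla\xi-\nabla\widetilde{\xi})\|_{H^1}$.

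The hard part is the second paragraph: the composition and product lemmas of Section~\ref{sect-2} are stated for homogeneous Sobolev spaces over $\mathbb{R}^2$, so the real work is to rerun the Bony paraproduct argument in the horizontal variable alone, push the full $x_1$–derivative through by the Leibniz rule, and absorb the resulting $L^\infty(\Omega)$ factors through $\Lambda_h^{s-1}H^1(\Omega)\hookrightarrow L^\infty(\Omega)$. Once that machinery is in place, everything else is bookkeeping on the explicit expansions of $\mathcal{A}$, $J$, $a_{11}^{-1}$, $B^1$, $B^2$ — in particular, on the second-order vanishing of $B^2$ at $\nabla\xi=0$.
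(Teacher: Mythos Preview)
Your proposal is correct and follows essentially the same route as the paper's proof, which consists only of the observation that $J$, $a_{11}$, $|\overrightarrow{\rm a}_1|$ are close to $1$ under the smallness hypothesis and a reference to the product estimates of Lemma~4.7 in \cite{Gui2020}; you have simply fleshed out that sketch by making the anisotropic algebra inequality $\|\Lambda_h^{s-1}(fg)\|_{H^1}\lesssim\|\Lambda_h^{s-1}f\|_{H^1}\|\Lambda_h^{s-1}g\|_{H^1}$ explicit and by isolating the structural reason (order of vanishing at $\nabla\xi=0$) behind the different prefactors for $B^1_t$ and $B^2_t$. One caution: your parenthetical justification ``every monomial of $B^2$ carries at least two factors vanishing at $\nabla\xi=0$'' is not literally true of the appendix formulas---for instance the term $-a_{11}(a_{11}^2+a_{31}^2-|\overrightarrow{\rm a}_1|^{4})\partial_2 v^1$ in $B^{2,2}$ has only one vanishing factor---so when you ``check on the appendix formulas'' you will need to track linear-order cancellations between terms rather than argue monomial by monomial.
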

     \begin{proof}
       Let $\epsilon_0$ be suitably small, we can find that $J$, $a_{11}$ and $|\overrightarrow{\rm{a}_1}|$ are near $1$. Then,  similar to the proof for Lemma 4.7 in \cite{Gui2020}, we may use the product estimates to get \eqref{prod-ab-est-one1} and \eqref{prod-ab-est-one2}.
     \end{proof}

      \subsection{Existence part of the proof of Theorem 1.1}
		Setting
		\begin{equation*}
			\begin{split}
				X_T:=\{(\xi^1,v,q)|&\xi^1 \in \mathcal{C}([0,T]; H^{s}(\Sigma_0)),
				\Lambda_h^{s-1}q \in L^2([0,T]; H^1(\Omega)),\\
				&\Lambda_h^{s-1}v \in \mathcal{C}([0,T]; {_0}{H}^1(\Omega)) \cap L^2([0,T]; H^2(\Omega))\}
			\end{split}
		\end{equation*}
		with the norm $\|\cdot\|_{X_T}$ defined by
		\begin{equation*}
			\begin{split}
				\|(\xi^1,v,q)\|_{X_T}:=&\|\xi^1 \|_{L^{\infty}([0, T]; H^s{(\Sigma_0)})}+\|\Lambda_h^{s-1}v\|_{L^{\infty}([0, T]; H^1(\Omega))}+\|\Lambda_h^{s-1}v\|_{L^2([0, T]; H^2(\Omega))}\\
				&+\|\Lambda_h^{s-1}v_t\|_{L^2([0, T]; L^2(\Omega))}+\|\Lambda_h^{s-1}q\|_{L^2([0, T]; H^1(\Omega))}.
			\end{split}
		\end{equation*}
		It's obvious that $(X_T,\|\cdot\|_{X_T})$ is a Banach space. Then, based on the fixed point method, we shall show the local existence of the system \eqref{fixed-point}.

{\bf Step 1: Construction of  the initial iteration $(\xi_1,v_1,q_1)$}

In order to get the first step of the iteration in the fixed point method, we will solve the linear system of $(\xi_1,v_1,q_1)$ as follows: 
		\begin{equation}\label{linear-equiv-form-one1}
			\begin{cases}
				\partial_t\xi_1  -v_1  =0&\quad\text{in } \Omega,\\
				\partial_tv_1-\nu\nabla\cdot\mathbb{D}(v_1)+\nabla q_1= F_1(\nabla\xi_0,v_1,q_1) &\quad\text{in } \Omega,\\
				\nabla\cdot v_1=F_2(\nabla\xi_0,v_1)&\quad\text{in } \Omega,\\
				(q_1-g\,\xi^1_1)n_0-\nu\mathbb{D}(v_1)n_0=F_3(\nabla\xi_0,v_1)&\quad\text{on } \Sigma_0,\\
				v_1=0&\quad\text{on } \Sigma_b,\\
				(\xi_1, v_1)|_{t=0} = (\xi_0, v_0)&\quad\text{in }\Omega,
			\end{cases}
		\end{equation}
        which is equivalent to
        \begin{equation}\label{linear-equiv-form-one2}
			\begin{cases}
				\partial_t\xi_1  -v_1  =0&\quad\text{in } \Omega,\\
				\partial_tv_1
-\nu\nabla_{\mathcal{A}_0}\cdot\mathbb{D}_{\mathcal{A}_0}(v_1)+\nabla_{\mathcal{A}_0} q_1= 0 &\quad\text{in } \Omega,\\
				\nabla_{\mathcal{A}_0}\cdot v_1=0&\quad\text{in } \Omega,\\
				(q_1-g\,\xi^1_1) \mathcal{N}_0
-\nu\mathbb{D}(v_1)\mathcal{N}_0=0&\quad\text{on } \Sigma_0,\\
				v_1=0&\quad\text{on } \Sigma_b,\\
				(\xi_1, v_1)|_{t=0} = (\xi_0, v_0)&\quad\text{in }\Omega.
			\end{cases}
		\end{equation}
        where $\mathcal{A}_0=\mathcal{A}(\nabla\xi_0)=(\mathbb{I}+\nabla\xi_0)^{-T}$ and $\mathcal{N}_0=J_0\mathcal{A}_0n_0$.

    Since $\xi^2$ and $\xi^3$ are decoupled with other unknowns on the boundaries $\Sigma_0$ and $\Sigma_b$ in \eqref{linear-equiv-form-one1}, we will first consider the linear system
              \begin{equation}\label{linear-equiv-form-one2-bdd}
			\begin{cases}
				\partial_t\xi_1^1  -v_1^1  =0&\quad\text{on } \Sigma_0,\\
				\partial_tv_1
-\nu\nabla_{\mathcal{A}_0}\cdot\mathbb{D}_{\mathcal{A}_0}(v_1)+\nabla_{\mathcal{A}_0} q_1= 0 &\quad\text{in } \Omega,\\
				\nabla_{\mathcal{A}_0}\cdot v_1=0&\quad\text{in } \Omega,\\
				(q_1-g\,\xi^1_1) \mathcal{N}_0
-\nu\mathbb{D}(v_1)\mathcal{N}_0=0&\quad\text{on } \Sigma_0,\\
				v_1=0&\quad\text{on } \Sigma_b,\\
            \xi_1^1|_{t=0}=\xi_0^1&\quad\text{ on } \Sigma_0,\\
				v_1|_{t=0} =v_0&\quad\text{in }\Omega.
			\end{cases}
		\end{equation}

	We will use the fixed point method to solve \eqref{linear-equiv-form-one2-bdd}. Indeed, given $(\xi^1_{1,(0)},v_{1,(0)},q_{1,(0)})\in X_T$ satisfying $\nabla_{J_0\mathcal{A}_0}\cdot v_{1,(0)}|_{t=0}=0$ and $\|(\xi^1_{1,(0)},v_{1,(0)},q_{1,(0)}  \|_{X_T}\leq 2C_0\left(   \|\xi_0^1\|_{H^{s}{(\Sigma_0)}} + \|\Lambda_h^{s-1} v_0\|_{H^1(\Omega)} \right)$
as the initial iteration data, we construct $(\xi^1_{1,(n)}, v_{1,(n)},q_{1,(n)})$ ($\forall\, n\in \mathbb{N}^+$) satisfying the following linear system
\begin{equation}\label{EB-4.10}
\begin{cases}
 \partial_t\xi^1_{1,(n)} - v^1_{1,(n)} =0 &\,\text{on } \Sigma_0,\\
\partial_tv_{1,(n)}-\nu\nabla\cdot\mathbb{D}(v_{1,(n)})+\nabla q_{1,(n)}= F_1(\nabla\xi_0,v_{1,(n-1)},q_{1,(n-1)}) &\,\text{in }\Omega,\\
\nabla\cdot v_{1,(n)}=F_2(\nabla\xi_0,v_{1,(n-1)})&\,\text{in } \Omega,\\
(q_{1,(n)} -g\,\xi^1_{1,(n)})n_0-\nu\mathbb{D}(v_{1,(n)})n_0=F_3(\nabla\xi_0,v_{1,(n-1)})&\,\text{on } \Sigma_0,\\
v_{1,(n)}=0&\,\text{on } \Sigma_b,\\
\xi^1_{1,(n)}|_{t=0} = \xi^1_0&\,\text{in }\Sigma_0,\\
v_{1,(n)}|_{t=0} = v_0&\,\text{in }\Omega.
			\end{cases}
		\end{equation}	
Denote this solution mapping by $\Upsilon_1$.
We may claim that, there exists $\varepsilon_1>0$ so small that if $\|\Lambda_h^{s-1}\nabla\xi_0\|_{H^1(\Omega)}\leq \varepsilon_1$, the sequence $\{(\xi^1_{1,(n)},v_{1,(n)},q_{1,(n)})\}_{n=1}^{\infty}$ satisfies the following uniform estimate
		\begin{equation}\label{xi-n-uni-est-1}
			\begin{split}
				\|(\xi^1_{1,(n)},v_{1,(n)},q_{1,(n)})  \|_{X_T}\leq 2C_0(   \|\xi_0^1\|_{H^{s}{(\Sigma_0)}} + \|\Lambda_h^{s-1} v_0\|_{H^1(\Omega)} )\quad (\forall\,\,n\in \mathbb{N}^+).
			\end{split}
		\end{equation}
In fact, applying Theorem \ref{thm-linear-problim-11} to \eqref{EB-4.10}, we prove that $(\xi^1_{1,(n)},v_{1,(n)},q_{1,(n)})\in X_T$ ($\forall\,\,n\in \mathbb{N}^+$) and
	\begin{equation*}
		\begin{split}
& \|(\xi^1_{1,(n)}, v_{1,(n)},q_{1,(n)})\|_{X_T}\\
&\leq C_0\bigg(   \|\xi_0^1\|_{H^{s}{(\Sigma_0)}}  + \|\Lambda_h^{s-1} u_0\|_{H^1(\Omega)}\bigg)\\
&\quad+C_0\bigg(\|\Lambda_h^{s-1} F_1\|_{L^2([0, T], L^2(\Omega))}  +  \|\Lambda_h^{s-1}(B^1(\nabla\xi_0), B^2(\nabla\xi_0))\|_{H^1(\Omega)}\|(\xi^1_{1,(n)},v_{1,(n)},q_{1,(n)})\|_{X_T}\bigg).
		\end{split}
\end{equation*}
%if $\|\Lambda_h^{N-1}\nabla\xi_0\|_{H^1(\Omega)}\leq \gamma_0$.
So if $\|\Lambda_h^{s-1}\nabla\xi_0\|_{H^1(\Omega)}\leq \varepsilon_1$ with $\varepsilon_1:=\min\{\epsilon_0, \frac{1}{2C_0C_1}\}$, then we get by an induction argument that ($\forall\,\,n\in \mathbb{N}^+$)
		\begin{equation*}
			\begin{split}
				&  \|(\xi^1_{1,(n)},v_{1,(n)},q_{1,(n)}  \|_{X_T}\\
& \leq C_0(\|\xi_0^1\|_{H^{s}{(\Sigma_0)}}  + \|\Lambda_h^{s-1} u_0\|_{H^1(\Omega)})+C_0C_1\|\Lambda_h^{s-1} \nabla\xi_0\|_{H^1(\Omega)}\|(\xi^1_{1,(n)},v_{1,(n)},q_{1,(n)})\|_{X_T}\\
&\leq C_0(  \|\xi_0^1\|_{H^{s}{(\Sigma_0)}} + \|\Lambda_h^{s-1} v_0\|_{H^1(\Omega)})+2C_0^2C_1 \varepsilon_1\,(  \|\xi_0^1\|_{H^{s}{(\Sigma_0)}} + \|\Lambda_h^{s-1} v_0\|_{H^1(\Omega)})\\
&\leq 2C_0(  \|\xi_0^1\|_{H^{s}{(\Sigma_0)}} + \|\Lambda_h^{s-1} v_0\|_{H^1(\Omega)}),
			\end{split}
		\end{equation*}
which finishes the proof of \eqref{xi-n-uni-est-1}.

		Next, we will show that the sequence $\{(\xi^1_{1,(n)}, v_{1,(n)},p_{1,(n)})\}_{n=1}^{\infty} $ is a Cauchy sequence in $X_T$.
In fact, set  $\delta f_{(n)}:= f_{(n)}-f_{(n-1)}$ ($\forall\,n \in \mathbb{N}$), then $(\delta \xi_{1,(n+1)}^1,\delta  v_{1,(n+1)},\delta  q_{1,(n+1)}) \in X_T$ satisfies
		\begin{equation*}\label{EB-4.10-delta-v-1-n}
			\begin{cases}
				\partial_t\delta\xi^1_{1,(n+1)}  -\delta v^1_{1,(n+1)}  =0&\quad\text{on }\Sigma_0,\\
				\partial_t\delta v_{1,(n+1)}-\nu\nabla\cdot\mathbb{D}(\delta v_{1,(n+1)})+\nabla \delta q_{1,(n+1)}=  F_1(\nabla\xi_0,\delta v_{1,(n)},\delta q_{1,(n)}) &\quad\text{in }\Omega,\\
				\nabla\cdot \delta v_{1,(n+1)}= F_2(\nabla\xi_0,\delta v_{1,(n)})&\quad\text{in }\Omega,\\
				\delta q_{1,(n+1)}n_0-g\delta\xi^1_{1,(n+1)}-\nu\mathbb{D}(\delta v_{1,(n+1)})n_0= F_3(\nabla\xi_0,\delta v_{1,(n)})&\quad\text{on }\Sigma_0,\\
				\delta v_{1,(n+1)}=0&\quad\text{on }\Sigma_b,\\
				\delta\xi_{1,(n+1)}^1|_{t=0} =  0&\quad\text{on }\Sigma_0,\\
				\delta v_{1,(n+1)}|_{t=0} = 0&\quad\text{in }\Omega.
			\end{cases}
		\end{equation*}
Thanks to Theorem \ref{thm-linear-problim-11} again, we obtain
		\[
		\begin{split}
			&\|(\delta \xi_{1,(n+1)}^1,\delta  v_{1,(n+1)},\delta  q_{1,(n+1)})\|_{X_T}\\
			&\leq
			C_0\left(\|\Lambda_h^{s-1} ((\nabla\cdot\mathbb{D} -\nabla_{\mathcal{A}_0}\cdot\mathbb{D}_{\mathcal{A}_0})\delta v_{1,(n)},(\nabla-\nabla_{\mathcal{A}_0}) \delta q_{1,(n)})\|_{L^2(L^2(\Omega))} \right.\\
			&\quad\left.+\|\Lambda_h^{s-1}(B^1(\nabla\xi_0), B^2(\nabla\xi_0)\|_{H^1(\Omega)}\|(0,\delta v_{1,(n)},0)\|_{X_T}\right).
		\end{split}
		\]
		Let $\|\Lambda_h^{s-1}\nabla\xi_0\|_{H^1(\Omega)}\leq \min\{\frac{\epsilon_0}{2},\varepsilon_2\}$ and let $\varepsilon_2$ be small enough such that
		\begin{equation*}
			\begin{split}
				 \|\Lambda_h^{s-1} (\mathrm{I}-\mathcal{A}_0) \|_{L^{\infty}(H^1 )}(1+\|\Lambda_h^{s-1}  \mathcal{A}_0  \|_{L^{\infty}(H^1 )})
				\leq \frac{1}{8C_0} ,	
			\end{split}
		\end{equation*}
		and
		\begin{equation*}\label{BBxi}
 \|\Lambda_h^{s-1}(B^1(\nabla\xi_0), B^2(\nabla\xi_0))\|_{H^1(\Omega)}\leq \frac{1}{4C_0},
        \end{equation*}
        which implies that
        \[
		\begin{split}
			 \|(\delta \xi_{1,(n+1)}^1,\delta  v_{1,(n+1)},\delta  q_{1,(n+1)})\|_{X_T}
			\leq\frac{1}{2}\|(0,\delta v_{1,(n)},\delta  q_{1,(n )})\|_{X_T} .
		\end{split}
		\]
	From this, we know that the solution mapping $\Upsilon_1: X_T \rightarrow X_T$ in \eqref{EB-4.10} is a contraction mapping. Hence, the sequence $\{(\xi^1_{1,(n)},v_{1,(n)},q_{1,(n)})\}_{n=1}^{\infty}\subset X_T$ is a Cauchy sequence, and then there is  $(\xi^1_{1},v_1,q_1)\in X_T$, the fixed point of the mapping $\Upsilon_1$, so that $\{(\xi^1_{1,(n)},v_{1,(n)},q_{1,(n)})\}_{n=1}^{\infty}$ converges to $(\xi^1_{1},v_1,q_1)$ in $X_T$. Moreover, we may check that $(\xi^1_{1},v_1,q_1)$ is the solution to \eqref{linear-equiv-form-one2-bdd} in $X_T$, and there holds
		\begin{equation}\label{initial-data-induct-1}
		\|(\xi^1_1,v_1,q_1)\|_{X_T}
		\leq 2C_0
		\left(\|\Lambda_h^{s-1}\xi^1_0\|_{H^1(\Sigma_0)} +\|\Lambda_h^{s-1}v_0\|_{H^1(\Omega)}\right).
	\end{equation}
	With $v_1$ above in hand, let
		\begin{equation}\label{assume-T}
		T\leq \frac{\|\Lambda_h^{s-1}\nabla\xi_0\|^2_{H^1(\Omega)}}{ 8C_0^2
			(\|\Lambda_h^{s-1}\xi^1_0\|_{H^1(\Sigma_0)} +\|\Lambda_h^{s-1}v_0\|_{H^1(\Omega)} )^2} ,
		\end{equation}
       and we solve $\xi_{1}$ in $\Omega$ by the linear ODEs
		\begin{equation*}
			\begin{cases}
				\partial_t\xi_{1}    =v_{1}&\quad\text{in }(0, T]\times \Omega,\\
				\xi_{1}|_{t=0} = \xi_0 &\quad\text{in }\Omega,
			\end{cases}
		\end{equation*}
		to get $\Lambda_h^{s-1}\nabla\xi_{1} \in \mathcal{C}([0,T];  H^1(\Omega))$, 
		  \begin{equation*}
		\begin{split}
			\|\Lambda_h^{s-1}\nabla\xi_1\|_{L^{\infty}_T(H^1(\Omega))}
			&\leq  2\|\Lambda_h^{s-1}\nabla\xi_0\|_{H^1(\Omega)} ,\\
            \|\Lambda_h^{s-1}\partial_t\nabla\xi_1\|_{L^2_T(L^2(\Omega))}
&\leq     T^{\frac{1}{2}}  \|\Lambda_h^{s-1}\nabla v_1\|_{L^2_T(L^2(\Omega))} \leq2\|\Lambda_h^{s-1}\nabla\xi_0\|_{H^1(\Omega)} .
		\end{split}
		\end{equation*}
and then
		  \begin{equation}\label{nabla-xi-1}
		\begin{split}
			\|\Lambda_h^{s-1}\nabla\xi_1\|_{L^{\infty}_T(H^1(\Omega))}+\|\Lambda_h^{s-1}\partial_t\nabla\xi_1\|_{L^2_T(L^2(\Omega))}\leq 4\|\Lambda_h^{s-1}\nabla\xi_0\|_{H^1(\Omega)}.
		\end{split}
		\end{equation}

		{\bf Step 2: Construction of approximate sequence.}\par
		Taking $(\xi_1,v_1,q_1)$ obtained in Step 1 as the initial value of the iteration, and letting $(\xi_{n},v_{n},q_{n})$ be a solution of the following linear problem $(n\in \mathbb{N}^+\text{ and }n\geq2)$:
		\begin{equation}\label{EB-4.10-d-non-linear}
			\begin{cases}
				\partial_{t}\xi_{n}-v_{n}=0,&\quad\text{in }\Omega,\\
				\partial_tv_{n}-\nu\nabla\cdot\mathbb{D}(v_{n})+\nabla q_{n}= F_1(\nabla\xi_{n-1},v_{n},q_{n}), &\quad\text{in }\Omega,\\
				\nabla\cdot v_{n}=F_2(\nabla\xi_{n-1},v_{n}),&\quad\text{in }\Omega,\\
				q_{n}n_0-g\xi^1_{n}n_0-\nu\mathbb{D}(v_{n})n_0=F_3(\nabla\xi_{n-1},v_{n}),&\quad\text{on }\Sigma_0,\\
				v_{n}=0,&\quad\text{on }\Sigma_b,\\
				(\xi_{n},v_{n})|_{t=0} = (\xi_0 ,v_0),&\quad\text{in }\Omega ,
			\end{cases}
		\end{equation}
         which is equivalent to
        \begin{equation*}
			\begin{cases}
				\partial_t\xi_{n}  -v_{n}  =0&\quad\text{in }\Omega,\\
				\partial_tv_{n}
-\nu\nabla_{\mathcal{A}_{n-1}}\cdot\mathbb{D}_{\mathcal{A}_{n-1}}(v_{n})+\nabla_{\mathcal{A}_{n-1}} q_{n}= 0 &\quad\text{in }\Omega,\\
				\nabla_{\mathcal{A}_{n-1}}\cdot v_{n}=0&\quad\text{in }\Omega,\\
				(q_{n}-g\,\xi^1_{n}) \mathcal{N}_{n-1}
-\nu\mathbb{D}(v_{n})\mathcal{N}_{n-1}=0&\quad\text{on }\Sigma_0,\\
				v_{n}=0&\quad\text{on }\Sigma_b,\\
				(\xi_{n}, v_{n})|_{t=0} = (\xi_0, v_0)&\quad\text{in }\Omega,
			\end{cases}
		\end{equation*}
        where $\mathcal{A}_n=\mathcal{A}(\nabla\xi_n)=(\mathbb{I}+\nabla\xi_n)^{-T}$ and $\mathcal{N}_n=J_n\mathcal{A}_nn_0$.
        For fixed $n \in \mathbb{N}$, in order to solve \eqref{EB-4.10-d-non-linear}, we first consider the linear system
        \begin{equation}\label{EB-non-nm-bdd}
\begin{cases}
 \partial_t\xi^1_{n} - v^1_{n} =0 &\,\text{on }\Sigma_0,\\
\partial_tv_{n}-\nu\nabla\cdot\mathbb{D}(v_{n})+\nabla q_{n}= F_1(\nabla\xi_{n-1},v_{n},q_{n}) &\,\text{in }\Omega,\\
\nabla\cdot v_{n}=F_2(\nabla\xi_{n-1},v_{n})&\,\text{in }\Omega,\\
(q_{n} -g\,\xi^1_{n})n_0-\nu\mathbb{D}(v_{n})n_0=F_3(\nabla\xi_{n-1},v_{n})&\,\text{on }\Sigma_0,\\
v_{n}=0&\,\text{on }\Sigma_b,\\
\xi^1_{n} |_{t=0}=\xi^1_0 &\,\text{on }\Sigma_0,\\
v_{n}|_{t=0} = v_0&\,\text{in }\Omega.
			\end{cases}
		\end{equation}

Choose $(\xi^1_{n,(0)}, v_{n,(0)},q_{n,(0)}):=(\xi^1_1, v_{1},q_{1})$ and construct the sequence
$(\xi^1_{n,(m)}, v_{n,(m)},q_{n,(m)})$ ($\forall\, m\in \mathbb{N}^+$) of \eqref{EB-non-nm-bdd} solving
\begin{equation}\label{EB-non-nm}
\begin{cases}
 \partial_t\xi^1_{n,(m)} - v^1_{n,(m)} =0 &\,\text{on }\Sigma_0,\\
\partial_tv_{n,(m)}-\nu\nabla\cdot\mathbb{D}(v_{n,(m)})+\nabla q_{n,(m)}= F_1(\nabla\xi_{n-1},v_{n,(m-1)},q_{n,(m-1)}) &\,\text{in }\Omega,\\
\nabla\cdot v_{n,(m)}=F_2(\nabla\xi_{n-1},v_{n,(m-1)})&\,\text{in }\Omega,\\
(q_{n,(m)} -g\,\xi^1_{n,(m)})n_0-\nu\mathbb{D}(v_{n,(m)})n_0=F_3(\nabla\xi_{n-1},v_{n,(m-1)})&\,\text{on }\Sigma_0,\\
v_{n,(m)}=0&\,\text{on }\Sigma_b,\\
\xi^1_{n,(m)} |_{t=0}=\xi^1_0 &\,\text{on }\Sigma_0,\\
v_{n,(m)}|_{t=0} = v_0&\,\text{in }\Omega.
			\end{cases}
		\end{equation}	
Thanks to Theorem \ref{thm-linear-problim-11}, we know that the solution mapping $\Upsilon_n: X_T\rightarrow X_T$ of the system \eqref{EB-non-nm} is well-defined, and there holds
		 \begin{equation}\label{est-n-m-eng-1}
		\begin{split}
			&\|(\xi_{n,(m)}^1,v_{n,(m)},q_{n,(m)})\|_{X_T}\\
&\leq  C_0(   \|\xi_0^1\|_{H^{s}{(\Sigma_0)}}  + \|\Lambda_h^{s-1} v_0\|_{H^1(\Omega)}+\|\Lambda_h^{s-1} F_1(\nabla\xi_{n-1},v_{n,(m-1)},q_{n,(m-1)})\|_{L^2_T(L^2(\Omega))}\\
			&  +  \|\Lambda_h^{s-1}(B^1(\nabla\xi_{n-1}), B^2(\nabla\xi_{n-1}) )\|_{L^{\infty}(H^1)}\|(0,v_{n,(m-1)},0)\|_{X_T}\\
&+\|\Lambda_h^{s-1} (B^1_t(\nabla\xi_{n-1}), B^2_t(\nabla\xi_{n-1}))\|_{L^{2}_T(L^2)}\|\Lambda_h^{s-1} v_{n,(m-1)}\|_{L^{\infty}_T(H^1)}).
		\end{split}
\end{equation}

For $\|\Lambda_h^{s-1}\nabla\xi_0\|_{H^1(\Omega)}\leq \varepsilon_2 $ with $\varepsilon_2:= \min\{\frac{\epsilon_0}{4}, \frac{1}{16C_0C_2}\}$, we may employ lemma \ref{estimate-B-B} and an induction argument to obtain that, $\forall\,\, n\geq 2,\,\,m\in \mathbb{N}^+$,
       \begin{equation}\label{assume-xixit}
		\begin{split}
			&\|\Lambda_h^{s-1} \nabla\xi_{n-1}\|_{L^{\infty}_T(H^1(\Omega))}
+\|\Lambda_h^{s-1} \partial_t\nabla\xi_{n-1}\|_{L^2_T(L^2(\Omega))}
\leq  4\|\Lambda_h^{s-1}\nabla\xi_0\|_{H^1(\Omega)}
		\end{split}
		\end{equation}
and
       \begin{equation}\label{estim-v-nm}
		\begin{split}
			&\|(\xi_{n,(m)}^1,v_{n,(m)},q_{n,(m)})\|_{X_T} \leq C_0(\|\xi_0^1\|_{H^{s}{(\Sigma_0)}}  + \|\Lambda_h^{s-1} u_0\|_{H^1(\Omega)})\\
&\quad\quad
+ C_0C_2\|\Lambda_h^{s-1} \nabla\xi_{n-1}\|_{L^{\infty}_T(H^1(\Omega))}\|(\xi^1_{n,(m-1)},v_{1,(n)},q_{n,(m-1)})\|_{X_T}\\
&\quad\quad +C_0C_2\|\Lambda_h^{s-1}\nabla \partial_t\xi_{n-1}\|_{L^2_T(H^1(\Omega))}\|(\xi^1_{n,(m-1)},v_{1,(n)},q_{n,(m-1)})\|_{X_T} \\
&\quad\leq C_0(  \|\xi_0^1\|_{H^{s}{(\Sigma_0)}} + \|\Lambda_h^{s-1} v_0\|_{H^1(\Omega)})+16C_0^2C_2 \varepsilon_2\,(  \|\xi_0^1\|_{H^{s}{(\Sigma_0)}} + \|\Lambda_h^{s-1} v_0\|_{H^1(\Omega)})\\
&\quad\leq 2C_0(  \|\xi_0^1\|_{H^{s}{(\Sigma_0)}} + \|\Lambda_h^{s-1} v_0\|_{H^1(\Omega)}).
		\end{split}
		\end{equation}
       In order to show that the sequence $\{(\xi_{n,(m)}^1,v_{n,(m)},q_{n,(m)})\}_{m=1}^{\infty}$ is a Cauchy sequence, let us consider the following system
		\begin{equation*}\label{EB-mmmmmmmmmmmm}
			\begin{cases}
				\partial_t\delta\xi^1_{n,(m+1)}-\delta v^1_{n,(m+1)}=0 &\quad\text{on }\Sigma_0,\\
				\partial_t\delta v_{n,(m+1)}-\nu\nabla\cdot\mathbb{D}(\delta v_{n,(m+1)})+\nabla \delta q_{n,(m+1)}=  F_1(\nabla\xi_{n-1},\delta v_{n,(m)},\delta q_{n,(m)}) &\quad\text{in }\Omega,\\
				\nabla\cdot \delta v_{n,(m+1)}= F_2(\nabla\xi_{n-1},\delta v_{n,(m)})&\quad\text{in }\Omega,\\
				\delta q_{n,(m+1)}n_0-g\delta\xi^1_{n,(m+1)}n_0-\nu\mathbb{D}(\delta v_{n,(m+1)})n_0= F_3(\nabla\xi_{n-1},\delta v_{n,(m)})&\quad\text{on }\Sigma_0,\\
				\delta v_{n,(m+1)}=0&\quad\text{on }\Sigma_b,\\\end{cases}
		\end{equation*}
       with the initial data $\delta \xi^1_{n,(m+1)}|_{t=0} = 0$ on $\Sigma_0$ and $\delta v_{n,(m+1)}|_{t=0} = 0 $ in $\Omega$.
        Thanks to Theorem \ref{thm-linear-problim-11} again, we can obtain
		\[
		\begin{split}
			&\|(\delta \xi_{n,(m+1)}^1,\delta  v_{n,(m+1)},\delta  q_{n,(m+1)})\|_{X_T}
			\leq
			C_0\big(\| F_1(\nabla\xi_{n-1},\delta v_{n,(m+1)},\delta q_{n,(m+1)})\|_{L^2(L^2(\Omega))}  \\
			%\Lambda_h^{N-1} ((\Delta -\Delta_{\mathcal{A}(\nabla\xi_0)})v_{1,n},(\nabla-\nabla_{\mathcal{A}(\nabla\xi_0)}) q_{1,n})
			&\qquad\qquad\qquad+\|\Lambda_h^{s-1}(B^1(\nabla\xi_{n-1}), B^2(\nabla\xi_{n-1})\|_{L^{\infty}(H^1)}\|(0,\delta v_{n,(m+1)},0)\|_{X_T} \\
			&\qquad\qquad\qquad+\|\Lambda_h^{s-1}(B^1_t(\nabla\xi_{n-1}), B^2_t(\nabla\xi_{n-1})\|_{L^2(L^2)}\|(0,\delta v_{n,(m+1)},0)\|_{X_T} \big).\\
		\end{split}
		\]
      Then for $\|\Lambda_h^{s-1}\nabla\xi_0\|_{H^1(\Omega)}\leq \varepsilon_3 $ with $\varepsilon_3:= \min\{\frac{\epsilon_0}{4}, \frac{1}{2C_0C_3}\}$, we arrive at
     \[
		\begin{split}
\|(\delta \xi_{n,(m+1)}^1,\delta  v_{n,(m+1)},\delta  q_{n,(m+1)})\|_{X_T}
			\leq&
			C_0 C_3\varepsilon_3\|(\delta \xi_{n,(m)}^1,\delta  v_{n,(m)},\delta  q_{n,(m)})\|_{X_T}  \\
           \leq&
			\frac{1}{2}\|(\delta \xi_{n,(m)}^1,\delta  v_{n,(m)},\delta  q_{n,(m)})\|_{X_T},
		\end{split}
		\]
      which implies that $\{(\xi_{n,(m)}^1,v_{n,(m)},q_{n,(m)})\}_{m=1}^{\infty}$ is a Cauchy sequence in $X_T$, and denote its limit by $(\xi_{n}^1,v_{n},q_{n})$. Similarly, we may readily verify that $(\xi_{n}^1,v_{n},q_{n})$ solves \eqref{EB-non-nm-bdd}.

        Due to \eqref{assume-T} and \eqref{nabla-xi-1},
        we therefore get a sequence $\{(\xi_{n}^1,v_{n},q_{n})\}_{m=1}^{\infty}\subset X_T$  by an induction argument about the index $n(=2,3,4,\cdots)$ and we solve $\xi_{n}$ in $\Omega$ by the linear ODEs
		\begin{equation*}
			\begin{cases}
				\partial_t\xi_{n}    =v_{n}&\quad\text{in }(0, T]\times \Omega,\\
				\xi_{n}|_{t=0} = \xi_0 &\quad\text{in }\Omega
			\end{cases}
		\end{equation*}
		to obtain  $\Lambda_h^{s-1}\nabla\xi_{n} \in \mathcal{C}([0,T]; H^1(\Omega))$ and
		\begin{equation}\label{estima-unif-gradxi}
		\begin{split}
			&\|\Lambda_h^{s-1}\nabla\xi_{n}\|_{L^{\infty}_T(H^1(\Omega))}
			\leq  \|\Lambda_h^{s-1}\nabla\xi_0\|_{H^1(\Omega)}+ T^{\frac{1}{2}} \|\Lambda_h^{s-1}\nabla v_n\|_{L^2_T(H^1(\Omega))},\\
			&\|\Lambda_h^{s-1}\partial_t\nabla\xi_{n}\|_{L^2_T(H^1(\Omega))}
			\leq  T^{\frac{1}{2}}\|\Lambda_h^{s-1}\nabla v_n\|_{L^{\infty}_T(L^2(\Omega))}.
		\end{split}
		\end{equation}
Therefore, we get that  $(\xi_{n},v_{n},q_{n})$ is a solution of \eqref{EB-4.10-d-non-linear} for $n\in \mathbb{N}^+$ and $n\geq 2$, and obtain the uniform estimates of $\{(\xi_{n},v_{n},q_{n})\}_{n=1}^{\infty}$ from \eqref{estim-v-nm} and \eqref{estima-unif-gradxi}, that is ($n\in\mathbb{N}^+$)
		\begin{equation}\label{uniform-estimates-n}
			\begin{split}
				&\|(\xi_n^1,v_n,q_n)\|_{X_T}
				\leq 2C_0\big(   \|\xi_0^1\|_{H^{s}{(\Sigma_0)}}  + \|\Lambda_h^{s-1} v_0\|_{H^1(\Omega)}    \big),\\
				&\|\Lambda_h^{s-1}\nabla\xi_n\|_{L^{\infty}_T(H^1(\Omega))}
+\|\Lambda_h^{s-1}\partial_t\nabla\xi_n\|_{L^2_T(L^2(\Omega))}
				\leq 4 \|\Lambda_h^{s-1}\nabla\xi_0\|_{H^1(\Omega)} .
			\end{split}
		\end{equation}
		
		{\bf Step 3: Convergence of the approximate sequence.}
		
		With Step 1 and Step 2 in hand, we will show that the sequence $\{(\xi^1_{n},v_{n},q_{n})\}_{n=1}^{\infty}$  is a Cauchy sequence.
		In order to do so, we consider the system
		\begin{equation*}\label{EB-4.1111}
			\begin{cases}
				\partial_t\delta\xi^1_{n+1}-\delta v^1_{n+1}=0 &\quad\text{on }\Sigma_0,\\
				\partial_t\delta v_{n+1}-\nu\nabla\cdot\mathbb{D}(\delta v_{n+1})+\nabla \delta q_{n+1}= \delta F_1(\nabla\xi_{n},v_{n+1},q_{n+1}) &\quad\text{in }\Omega,\\
				\nabla\cdot \delta v_{n+1}=\delta F_2(\nabla\xi_{n},v_{n+1})&\quad\text{in }\Omega,\\
				\delta q_{n+1}n_0-g\delta\xi^1_{n+1}n_0-\nu\mathbb{D}(\delta v_{n+1})n_0=\delta F_3(\nabla\xi_{n},v_{n+1})&\quad\text{on }\Sigma_0,\\
				\delta v_{n+1}=0&\quad\text{on }\Sigma_b,\\
				\delta \xi^1_{n+1}|_{t=0} = 0&\quad\text{on }\Sigma_0, \\
             \delta v_{n+1})|_{t=0} = 0&\quad\text{in }\Omega.
			\end{cases}
		\end{equation*}
		Thanks to Theorem \ref{thm-linear-problim-11} and \eqref{uniform-estimates-n}, we obtain
		\[
		\begin{split}
			&\|(\delta \xi_{n+1}^1,\delta  v_{n+1},\delta  q_{n+1})\|_{X_T}
			\leq
			C_0 C_4\|\Lambda_h^{s-1}\nabla\xi_0\|_{H^1(\Omega)}\|(\delta \xi_{n,(m+1)}^1,\delta  v_{n,(m+1)},\delta  q_{n,(m+1)})\|_{X_T}  \\
			&\quad  +2C_0C_4\big(   \|\xi_0^1\|_{H^{s}{(\Sigma_0)}}  + \|\Lambda_h^{s-1} v_0\|_{H^1(\Omega)}    \big) \big(\|\Lambda_h^{s-1} \nabla \delta  \xi_n \|_{L^{\infty}(H^1)} +\|\Lambda_h^{s-1}  \nabla\partial_t\delta  \xi_n \|_{L^2(L^2)} \big).
		\end{split}
		\]
        Since
        \[
        \begin{split}
       & \|\Lambda_h^{s-1} \nabla \delta  \xi_n \|_{L^{\infty}_T(H^1)}
        \leq T^{\frac{1}{2}}\|\Lambda_h^{s-1} \nabla \delta  v_n \|_{L^{2}_T(H^1)}, \ \|\Lambda_h^{s-1}  \nabla\partial_t\delta  \xi_n \|_{L^2_T(L^2)}
        \leq  T^{\frac{1}{2}}\|\Lambda_h^{s-1} \nabla \delta  v_n \|_{L^{\infty}_T(L^2)},
        \end{split}
        \]
        we have
        \[
		\begin{split}
\|(\delta \xi_{n+1}^1,\delta  v_{n+1},\delta  q_{n+1})\|_{X_T}
			&\leq
			C_0 C_4\|\Lambda_h^{s-1}\nabla\xi_0\|_{H^1(\Omega)}\|(\delta \xi_{n,(m+1)}^1,\delta  v_{n,(m+1)},\delta  q_{n,(m+1)})\|_{X_T}  \\
			&+2C_0C_4\big(   \|\xi_0^1\|_{H^{s}{(\Sigma_0)}}  + \|\Lambda_h^{s-1} v_0\|_{H^1(\Omega)}    \big)T^{\frac{1}{2}}\|(\delta \xi_{n}^1,\delta  v_{n},\delta  q_{n})\|_{X_T}.
		\end{split}
		\]
		Let  $
		\|\Lambda_h^{s-1}\nabla\xi_0\|_{H^1(\Omega)}\leq \frac{1}{3C_0C_4}$
		and
		$ T\leq \frac{1}{8C_0C_4(   \|\xi_0^1\|_{H^{s}{(\Sigma_0)}}  + \|\Lambda_h^{s-1} v_0\|_{H^1(\Omega)})},$
		then  we obtain
		\[
		\begin{split}
			\|(\delta \xi_{n+1}^1,\delta  v_{n+1},\delta  q_{n+1})\|_{X_T}
			\leq\frac{1}{2}
			\|(\delta \xi_{n}^1,\delta v_{n},\delta q_{n})\|_{X_T},
		\end{split}
		\]
which follows that the sequence $\{(\xi_{n}^1,v_n,q_n)\}_{n=1}^{\infty}$ is a Cauchy sequence in $X_T$. Hence, there exists $(\xi^1, v,q)\in X_T$ so that
		\[
		(\xi^1_n,v_n,q_n)\to (\xi^1,v,q) \text{ in }X_T.
		\]
		With $v$ in hand, we can solve $\xi$ in $\Omega$ by
		\begin{equation*}
			\begin{cases}
				\partial_t\xi    =v&\quad\text{in } (0, T] \times \Omega,\\
				\xi|_{t=0} = \xi_0 &\quad\text{in }\Omega
			\end{cases}
		\end{equation*}
		to obtain $\Lambda_h^{s-1}\nabla\xi\in \mathcal{C}([0,T]; H^1(\Omega))$. Finally, we can readily verify that $(\xi,v,q)$ is a solution of \eqref{EB-multi-1.1} and satisfies \eqref{est-main}.
		
       This finishes the proof of existence part of Theorem \ref{thm-main-1}.
		\subsection{Uniqueness part of the proof of Theorem \ref{thm-main-1}}
        %\subsection{Uniqueness in $[0,T]$}
	Let $({\xi}, {v}, {q})$ and $(\widetilde{\xi}, \widetilde{v},\widetilde{q})$  be two solutions to \eqref{fixed-point} with the initial data $({\xi}_0, {v}_0)$ and $( \widetilde{\xi}_0, \widetilde{v}_0)$ respectively , and satisfy
       \begin{equation*}
		\begin{split}
			\Lambda_h^{s-1}(\nabla{\xi},\,\nabla\widetilde{\xi} ) &\in \mathcal{C}([0,T]; H^1(\Omega)), \,({\xi}^1,\,\widetilde{\xi}^1) \in \mathcal{C}([0,T]; H^{s}(\Sigma_0)), \\
			\Lambda_h^{s-1}({v},\,\widetilde{v}) &\in \mathcal{C}([0,T];\, {_0}{H}^1(\Omega)) \cap L^2(0,T; H^2(\Omega)),\quad \Lambda_h^{s-1}({q},\,\widetilde{q})\in L^2(0,T; H^1(\Omega)).
		\end{split}
		\end{equation*}
Set $\delta \xi\eqdefa \xi-\widetilde{\xi}$, $\delta v=v-\widetilde{v}$, and  $\delta q=q-\widetilde{q}$.
Then we have
		\begin{equation*}\label{uniq-stab}
			\begin{cases}
				\partial_{t}\delta\xi -\delta v =0,&\quad\text{in }\Omega,\\
				\partial_t\delta v -\nu\nabla\cdot\mathbb{D}(\delta v )+\nabla \delta q = \delta F_1 , &\quad\text{in }\Omega,\\
				\nabla\cdot \delta v =\delta F_2 ,&\quad\text{in }\Omega,\\
				\delta q n_0-g\delta \xi^1 n_0-\nu\mathbb{D}(\delta v )n_0=\delta F_3 ,&\quad\text{on }\Sigma_0,\\
				\delta v =0,&\quad\text{on }\Sigma_b,\\
				(\delta \xi,\delta v )|_{t=0} = (\delta\xi_0,\delta v_0),&\quad\text{in }\Omega,
			\end{cases}
		\end{equation*}
		where $\delta F_1:= F_1(\nabla\xi,v,q) - F_1(\nabla \widetilde{\xi}, \widetilde{v},\widetilde{q})$, $\delta F_2:=  B^1(\nabla\xi) :\nabla v
    - B^1(\nabla\widetilde{\xi}) :\nabla \widetilde{v} $, $\delta F_3:=B^2(\nabla\xi) :\nabla_h v  - B^2(\nabla\widetilde{\xi}) :\nabla_h \widetilde{v}$.
     If $\delta  \xi_0=0$ and $\delta v_0=0$ in $\Omega$, then, according to Theorem \ref{thm-linear-problim-11}, we obtain
		\begin{equation*}\label{estim-u-s-11}
		\begin{split}
			  \|(\delta \xi^1,  \delta v  , \delta q)\|_{X_t}
\leq& C_0  \mathfrak{C}\,t^{\frac{1}{2}} \|(0,\delta v,  \delta q)\|_{X_t}
		\end{split}
		\end{equation*}
       for any $0\leq t\leq  T $, where the positive constant $\mathfrak{C}$ depends on $\|\Lambda_h^{s-1}(\nabla \xi,\nabla\tilde{\xi})\|_{L_T^{\infty}(H^1(\Omega))}$, $\|(0,v,q)\|_{X_T}$ and $\|(0,\tilde{v},\tilde{q})\|_{X_T}$.
       Let $0 \leq t\leq t_0$ with $t_0\leq\frac{1}{4\mathfrak{C}^2C^2_0}$, there holds
       \begin{equation}\label{unq-7/8}
		\begin{split}
			  \|(\delta \xi^1,  \delta v  , \delta q)\|_{X_t}
\leq&  \frac{1}{2}  \|(0,\delta v,  \delta q)\|_{X_t}.
		\end{split}
		\end{equation}
       which implies that
       $(\delta \xi^1,  \delta v  , \delta q)=0$ in $[0,t_0]$, and then $\nabla\delta \xi=0$ in $[0,t_0]$ from \eqref{fixed-point}$_1$. The uniqueness of such strong solutions on the time interval $[0, T]$ then follows by a bootstrap argument. At the same lines, we may get that the solution $({\xi}, {v}, {q})$ to \eqref{fixed-point} depends continuously on the initial data $({\xi}_0, {v}_0)$ in $[0,T]$.
      We then complete the proof of uniqueness. 

       %%%%%%%%%%%%%%%%%%%%%%%%%%%%%%%%%%%%%%%%%%%%%%
%%%%%%%%%%%%%%%%%%%%%%%%%%%%%%%%%%%%%%%%%%
\renewcommand{\theequation}{\thesection.\arabic{equation}}
\setcounter{equation}{0}
%%%%%%%%%%%%%%%%%%%%%%%%%%%%%%%%%%%%%%%%%%%%%%
%%%%%%%%%%%%%%%%%%%%%%%%%%%%%%%%%%%%%%%%%%

%\newpage

\section{Appendix}\label{sect-appendix-1}

\subsection{The expressions of $B$ forms}

We denote $B^1:\nabla v=B^1_{i,j}\partial_i v^j$ $(i,j=1,2,3)$ and $(B^2:\nabla_h v)^j=B_{\alpha,i}^{2, j}\partial_{\alpha} v^i$ $(i,j=1,2,3,\alpha=2,3)$.
   Here, $B^1:=\mathbb{I}-a_{11}^{-1}J\,\mathcal{A}$
    and
    $B^2$ is a $3\times 2\times 3$ tensor defined by
    \begin{equation*}\label{prtv-interface-2}
  \begin{split}
  &B^{2, 2}:\nabla_h v \eqdefa -\nu a_{11}^{-1}|\overrightarrow{\rm{a}_1}|^{-4}\bigg[-a_{11}(a_{11}^2+a_{31}^2-|\overrightarrow{\rm{a}_1}|^{4})\partial_2v^1\\
  &\qquad
  +(a_{11}^2+a_{31}^2) (a_{21}\mathcal{B}_{1}-a_{11}\mathcal{B}_{2}):\nabla_hv +|\overrightarrow{\rm{a}_1}|^{2}a_{21}a_{11}^2(-\nabla_h\cdot\,v^h+B_{h}:\nabla_h v) \\
  &\qquad-a_{21}a_{31} (- a_{11} \partial_3 v^1+ (a_{31}\mathcal{B}_{1 } -a_{11}\mathcal{B}_{3 } ) :\nabla_hv  )\bigg],\\
\end{split}
\end{equation*}
  \begin{equation*}
  \begin{split}
&B^{2, 3}:\nabla_h v\eqdefa   -\nu a_{11}^{-1}|\overrightarrow{\rm{a}_1}|^{-4}\bigg[-a_{11}(a_{11}^2+a_{21}^2-|\overrightarrow{\rm{a}_1}|^{4})\partial_3v^1\\
&\qquad+(a_{11}^2+a_{21}^2) (a_{31}\mathcal{B}_{1}-a_{11}\mathcal{B}_{3}) :\nabla_hv+|\overrightarrow{\rm{a}_1}|^{2}a_{31}a_{11}^2(-\nabla_h\cdot\,v^h+B_{h}:\nabla_hv) \\
  &\qquad-a_{21}a_{31} (-a_{11}\partial_2v^1+ (a_{21}\mathcal{B}_{1} -a_{11}\mathcal{B}_{2 } ):\nabla_hv  )\bigg].
\end{split}
\end{equation*}
and
   \begin{equation*}\label{prtv-interface-3}
  \begin{split}
&B^{2, 1}:\nabla_hv\eqdefa -\nu a_{11}^{-1}|\overrightarrow{\rm{a}_1}|^{-2}J^{-1}
\bigg[-2a_{21} |\overrightarrow{\rm{a}_1}|^{2}J\partial_2 v^1
-2a_{31}|\overrightarrow{\rm{a}_1}|^{2}J\partial_3 v^1\\
&\qquad+2J( a_{21} |\overrightarrow{\rm{a}_1}|^{2}B^{2, 2}+ a_{31} |\overrightarrow{\rm{a}_1}|^{2}B^{2, 3}+a_{11}|\overrightarrow{\rm{a}_1}|^{2} B_{h} ):\nabla_h v ,\\
&\qquad +2 a_{11}|\overrightarrow{\rm{a}_1}|^{2}( (1-J)\nabla_h\cdot\,v^h
- \mathcal{B}_{4 } :\nabla_h v )
-  a_{11} (a_{\beta\,1}\partial_{\beta} v^1 + a_{j1}\mathcal{B}_{j}:\nabla_h v)\bigg].
\end{split}
\end{equation*}
where $\overrightarrow{\rm{a}_1}:=(a_{11}, a_{21}, a_{31})^T$,
 \begin{equation*}
       \begin{split}
			 \mathcal{B}_{1} :\nabla_h v:= \left(
          \begin{array}{cc}
            a_{i2}a_{i1}+a_{12}a_{11} &    a_{i3}a_{i1}+a_{13}a_{11} \\
            a_{12}a_{21}             &  a_{13}a_{21} \\
            a_{12}a_{31}           &  a_{13}a_{31}
          \end{array}
          \right):
          \left(
          \begin{array}{ccc}
           \partial_2v^1 &  \partial_3v^1 \\
           \partial_2v^2 &  \partial_3v^2 \\
           \partial_2v^3 &  \partial_3v^3
          \end{array}
          \right),
       \end{split}
	\end{equation*}

		 \begin{equation*}
       \begin{split}
			 \mathcal{B}_{2} :\nabla_h v:= \left(
          \begin{array}{cc}
            a_{22}a_{11}-1    &    a_{23}a_{11}  \\
            a_{i2}a_{i1}+a_{22}a_{21}  &  a_{i3}a_{i1}+a_{23}a_{31} \\
            a_{22}a_{31}  & a_{23}a_{31}
          \end{array}
          \right):
          \left(
          \begin{array}{ccc}
           \partial_2v^1 &  \partial_3v^1 \\
           \partial_2v^2 &  \partial_3v^2 \\
           \partial_2v^3 &  \partial_3v^3
          \end{array}
          \right),
       \end{split}
	\end{equation*}

   \begin{equation*}
       \begin{split}
			 \mathcal{B}_{3} :\nabla_h v := \left(
          \begin{array}{cc}
            a_{32}a_{11}    &    a_{33}a_{11}-1  \\
            a_{32}a_{21}  &     a_{33}a_{21} \\
            a_{i2}a_{i1}+a_{32}a_{31}  &  a_{i3}a_{i1}+a_{33}a_{31}
          \end{array}
          \right):
          \left(
          \begin{array}{ccc}
           \partial_2v^1 &  \partial_3v^1 \\
           \partial_2v^2 &  \partial_3v^2 \\
           \partial_2v^3 &  \partial_3v^3
          \end{array}
          \right),
       \end{split}
	\end{equation*}

   \begin{equation*}
       \begin{split}
			 \mathcal{B}_{4 } :\nabla_h v:= \left(
          \begin{array}{cc}
           -a_{12}        &   -a_{13}  \\
           -a_{22}+1      &   -a_{23}  \\
           -a_{32}        &   -a_{33}+1
          \end{array}
          \right):
          \left(
          \begin{array}{ccc}
           \partial_2v^1 &  \partial_3v^1 \\
           \partial_2v^2 &  \partial_3v^2 \\
           \partial_2v^3 &  \partial_3v^3
          \end{array}
          \right),
       \end{split}
	\end{equation*}
and
    \begin{equation*}
       \begin{split}
			 B_{h}:\nabla_h v := -a_{11}^{-1}\left(
          \begin{array}{cc}
            a_{12}            &     a_{13} \\
             a_{22}-a_{11}   &     a_{23} \\
             a_{32}          &    a_{33}-a_{11}
          \end{array}
          \right):
          \left(
          \begin{array}{ccc}
           \partial_2v^1 &  \partial_3v^1 \\
           \partial_2v^2 &  \partial_3v^2 \\
           \partial_2v^3 &  \partial_3v^3
          \end{array}
          \right).
       \end{split}
	\end{equation*}

	\bigbreak \noindent {\bf Acknowledgments.}
		G. Gui's research is supported in part by the National Natural Science Foundation of China under Grants 12371211 and 12126359. Y. Li is partially supported by the Innovation Project of Hunan Province under Grant CX20230602.

	\end{document}